\def\mydefb#1{\expandafter\def\csname b#1\endcsname{{\bm{#1}}}}
\def\mydefallb#1{\ifx#1\mydefallb\else\mydefb#1\expandafter\mydefallb\fi}
\def\mydefgreek#1{\expandafter\def\csname b#1\endcsname{\text{\boldmath$\mathbf{\csname #1\endcsname}$}}}
\def\mydefallgreek#1{\ifx\mydefallgreek#1\else\mydefgreek{#1}%
	\lowercase{\mydefgreek{#1}}\expandafter\mydefallgreek\fi}
\newtheorem{theorem}{Theorem}
\newtheorem{remark}{Remark}
\newtheorem{lemma}{Lemma}
\newtheorem{assumption}{Assumption}
\newtheorem{definition}{Definition}
\newtheorem{corollary}{Corollary}
\newtheorem{example}{Example}
\newcommand{\thmBoundtrunc}[1]{Under Assumption \ref{boundedtotrew} there is a unique real number $\nu_\sigma(\bs)$ attaining the supremum $$\sup\left\{\nu:\inf_{\tau\in\mathcal{T}_\sigma}\mathbb{E}_\bs[\tilde{Z}_\tau(\nu)]\leq 0 \right\}.$$
	In fact, $\nu_\sigma(\bs)$ is the unique root of $$
	f_\bs^{\sigma}:\nu\mapsto \inf_{\tau\in\mathcal{T}_\sigma}\mathbb{E}_\bs[\tilde{Z}_\tau(\nu)].$$
	The infimum in the condition is attained for some $\tau_{\sigma}(\nu,\bs)\in\mathcal{T}_{\sigma}$.\\
	% , which for $\nu = \nu_\sigma(\bs)$ is also the stopping time where the following supremum is attained
	% \begin{equation}
		%     \sup_{\tau\in\mathcal{T}_{\sigma}}\frac{\mathbb{E}_\bs\left[\sum_{t=0}^{\tau-1}\gamma^t R(\bS_t)\right]}{\mathbb{E}_\bs\left[\sum_{t=0}^{\tau-1}\gamma^t\right]}.\label{#1}
		% \end{equation}
	Furthermore, with $x^+ =\max(x,0)$ for $x\in\mathbb{R}$, it holds that 
	$$0\leq\nu(\bs)-\nu_\sigma(\bs)\leq \frac{\mathbb{E}_\bs\left[\gamma^\sigma \nu(\bS_\sigma)^+\right]}{1 -\mathbb{E}_\bs[\gamma^\sigma]}
	% 	\leq\frac{C(\bs)}{1/\mathbb{E}_\bs[\gamma^\sigma]-1} 
	\label{boundtrunc}.$$}
\newcommand{\thmFTCB}{ For $\omega^*\in\mathbb{R}$, initial 
	point $\omega_0\in\mathbb{R}$, and martingale difference sequence \eqref{MartDiffSeq}, let the sequence $(\omega_m)_m$ be defined as
	\begin{equation*}
		\omega_{m+1} = \omega_m - \alpha_m(f_\bs(\omega_m)-f_\bs(\omega^*)+\epsilon_m).
	\end{equation*}
	Assume the step-size sequence is such that $\sup_m \alpha_m\leq 2(R_u-R_\ell)$ almost surely.
	Then for all $m$
	\begin{equation*}
		\mathbb{E}[(\omega^*-\omega_{m+1})^2] \leq \mathbb{E}[\left(1-c\alpha_m\right)^2(\omega^*-\omega_m)^2] +\mathbb{E}[\alpha_m^2]\mathbb{E}[\epsilon_m^2].
\end{equation*}}
\def\Lemmanubound{
	Let $\sup_m\alpha_m\leq M_\alpha<\infty$ almost surely. We have almost surely that for all $m$ $$\nu_m(\bs)\in[R_\ell-M_{\alpha}/2,\; R_u+M_\alpha/2].$$	 }
\def\Thmderiv{If for all $t$ the cumulative distribution functions of $R(\bS_t)$ starting from $\bS_0=\bs$ have finitely many jumps, then for all but finitely many points $\nu$ the function $\tilde{f}_{\bs}$ is differentiable with derivative $$\frac{d}{d\nu}\tilde{f}_{\bs}(\nu) =  \sum_{k=1}^K\mathbb{E}_\bs\left[h^{(k)}_{[k,1],n}\left(U^{(k)}_{[k,1],n}(\nu),\nu\right)\mathbbm{I}(\tilde{V}_{\bs}(\nu)\in [-1/2,1/2])\right],$$ where 
	$\tilde{V}_{\bs}(\nu) = \sum_{k=1}^K \min_{u\in [N]}Z_{[k,1],u, n}^{(k)}(\nu)$, $U^{(k)}_{\bi,n}(\nu)=\underset{u\in [N]}{\arg\min}\;Z_{\bi,u,n}^{(k)}(\nu),$ and
	
	\begin{align*} &h^{(1)}_{\bi,n}(t,\nu) = (1-\gamma^t)/(2(R_u-R_\ell)(1-\gamma^N)),\\&
		h^{(k)}_{\bi,n}(t, \nu)= h_{[\bi,1],n}^{(k-1)}(t,\nu) - \frac{1}{n(\bi,k)}\sum_{j=2}^{n(\bi,k)+1} h_{[\bi,j],n}^{(k-1)}(U^{(k-1)}_{[\bi,j],n}(\nu),\nu)|\{\bS^{[\bi,j]}_{\ixt}=\bS^{\bi}_{\ixt}\}\;\;\;\;(k\geq 2).
	\end{align*}
}
\def\Thmconvgdist{Let $h_{m,[k,1],n}^{(k)},\; U_{m,[k,1],n}^{(k)}$ be independent versions (in $ m$) of $h_{[k,1],n}^{(k)},\; U_{[k,1],n}^{(k)}$, and
	$$h_{m} : \nu\mapsto \sum_{k=1}^{K}h_{m,[k,1],n}^{(k)}(U_{m,[k,1],n}^{(k)}(\nu),\nu).$$
	Let
	$$\alpha_m = \frac{1}{|\sum_{\ell=1}^m h_\ell(\nu_{\ell}(\bs))|}.$$ 
	Let $\mathcal{V}$ be the set of points where $\tilde{f}_\bs$ is differentiable.
	If $\inf_{\nu\in\mathcal{V}}d/d\nu \tilde{f}_\bs(\nu)>0$, then there is a unique point $\tilde{\nu}(\bs)$ such that $\tilde{f}_{\bs}(\tilde{\nu}(\bs))=0$. If the derivative of $f^{(K)}_{\bs,n}$ exists at $\tilde{\nu}(\bs)$ and $\mathbb{E}[V_{\bs}(\tilde{\nu}(\bs))^2]>0$,
	we have
	\begin{equation}\sqrt{m}(\nu_{m}(\bs)-\tilde{\nu}(\bs))\stackrel{d}{\rightarrow} \mathcal{N}\left(0, \frac{\mathbb{E}[V_{\bs}(\tilde{\nu}(\bs))^2]}{\left(\frac{d}{d\nu}f^{(K)}_{\bs,n}(\tilde{\nu}(\bs))\right)^2}\right).\label{eqn:CLT_SA}\end{equation}}	
\newcommand{\ixt}{\llbracket 0, t\rrbracket}
\newcommand{\ixT}{\llbracket 0, T\rrbracket}
\newcommand{\ixv}[1]{\llbracket 0, #1\rrbracket}
\DeclarePairedDelimiter\ceil{\lceil}{\rceil}
\DeclarePairedDelimiter\floor{\lfloor}{\rfloor}
\newcommand{\ubar}[1]{\underaccent{\bar}{#1}}
\title{A Sampling-based Gittins Index Approximation}
\author{Stef Baas \\
	Stochastic Operations Research Group,
	University~of~Twente,
	Enschede, The~Netherlands\\
	\url{s.p.r.baas@utwente.nl} \\
	%% examples of more authors
	\And
	Richard J. Boucherie\\
	Stochastic Operations Research Group,
	University~of~Twente,
	Enschede, The~Netherlands\\
	\And 
	Aleida Braaksma \\
	Stochastic Operations Research Group,
	University~of~Twente,
	Enschede, The~Netherlands\\
	%% \AND
	%% Coauthor \\
	%% Affiliation \\
	%% Address \\
	%% \texttt{email} \\
	%% \And
	%% Coauthor \\
	%% Affiliation \\
	%% Address \\
	%% \texttt{email} \\
	%% \And
	%% Coauthor \\
	%% Affiliation \\
	%% Address \\
	%% \texttt{email} \\
}
\begin{document}
	\maketitle
	
	\begin{abstract}
		A sampling-based method is introduced to approximate the Gittins index for a general family of alternative bandit processes. The approximation consists of a truncation of the optimization horizon and support for the immediate rewards, an optimal stopping value approximation, and a stochastic approximation procedure. Finite-time error bounds are given for the three approximations, leading to a procedure to construct a confidence interval for the Gittins index using a finite number of Monte Carlo samples, as well as an \hbox{epsilon-optimal policy} for the Bayesian multi-armed bandit. Proofs are given for almost sure convergence and convergence in distribution for the sampling based Gittins index approximation. In a numerical study, the approximation quality of the proposed method is verified for the Bernoulli bandit and Gaussian bandit with known variance, and the method is shown to significantly outperform Thompson sampling and the Bayesian Upper Confidence Bound algorithms for a novel random effects multi-armed bandit.
	\end{abstract}

	% keywords can be removed
	\keywords{Stochastic Approximation \and Multi-Armed Bandits \and Optimal Stopping \and Bayesian Computation \and Markov~Decision~Processes}

	\section{Introduction}
	The family of alternative bandit processes (FABP) is a well established problem in the field of Operations Research (\citet{glazebrook1983optimal}). In short, an FABP is a problem in which the decision maker sequentially chooses one out of a finite collection of independent Markov reward processes (sometimes called arms) to evolve, and the goal is to optimize the total discounted reward.  
	As initially proven in~\citet{gittins1979bandit}, 
	an index value can be determined based on the state of each bandit process, and the optimal policy is to choose the arm with the highest index value at each decision epoch.
	This index value, introduced as the dynamic allocation index, is now referred to as the Gittins index.  As the Gittins index can be calculated separately based on the state of each bandit process, there is a large gain in computational efficiency for finding the optimal policy when compared to methods taking the states of all bandit processes into account (\citet{puterman1990markov}). Next to the family of alternative bandit processes, the Gittins index was also found to be the optimal solution to a number of other problems in operations research such as optimal scheduling and search problems~(\citet{aalto2011properties, boodaghians2022pandora}).

	When, e.g., the total average reward is considered, or the Markov reward processes are no longer independent, the bandit problem becomes a restless bandit problem, where each arm evolves after an arm is chosen, instead of just that specific arm. Optimality of the Gittins index policy is no longer a guarantee in this case. 
	When the condition of indexability is met, however, the policy choosing the largest Whittle index  can provide a well-behaving heuristic for the problem under consideration~(\citet{
		% ansell2003whittle,
		weber1990index, glazebrook2009generalized,	fryer2018two
		% maatouk2020optimality,
	}). The Whittle index has a definition similar to the Gittins index, and hence computation methods for the Gittins index work for the Whittle index as well.
	
	% I, conditions for indexability are known \citep{nino2001restless, nino2002dynamic}. 
	%  The Gittins index of an arm can be expressed as the fair cost per time step in an optimal stopping problem, given the current state of the Markov process.
	%  Hence pulling the arm with the highest index would intuitively lead to the highest immediate rewards, which corresponds to an optimal action due to discounting. 

	% , applied to the setting of a clinical trial. This application might be the oldest application envisioned for the Bayesian multi-armed bandit~\cite{gittins1979bandit}, although, due to barriers involving, e.g., allocation bias, delays, and problems with statistical inference, a real-life application has not been established. Recently, it has been advocated that a lot of these barriers can be lifted, and policies based on the Gittins index are advocated for, as they show significant outperformance in terms of patient benefit~\citep{villar2015multi,robertson2023rejoinder}.
	
	Families of alternative bandit processes arise in various applications, such as job scheduling in queues, mining (or search) operations, advertisement, and Bayesian multi-armed bandit problems~(\citet{mahajan2008multi, gittins2011multi}). 
	The current paper will focus mainly on multi-armed bandit problems, with an application to clinical trials. Due to an increase in computational power and an increased focus on patient-centric medicine, research on treatment allocation in clinical trials using the Gittins index has gained popularity in the last decade \hbox{(\citet{ villar2015multi, villar2015FLGI, robertson2023response})}.
	In the multi-armed bandit problem, the decision maker is tasked with the choice to sample rewards from one out of a finite collection of unknown distributions at each decision epoch. In the Bayesian setting, each distribution is endowed with a prior. The resulting sequences of posterior distributions and posterior mean outcomes at each decision epoch then result in the separate Markov reward processes of an FABP. The Bayesian multi-armed bandit problem, popularized in~\citet{robbins1952some}, was introduced in~\citet{thompson1933likelihood} along with the approximate solution method now referred to as Thompson sampling, which was hence the first approximate solution method for the Bayesian multi-armed bandit problem (\citet{Slivkins2019}). 
	
	This paper focuses on maximizing the Bayesian total expected discounted reward. 
	When the regret under a multi-armed bandit problem is analyzed in the frequentist framework, many \hbox{index-based} approximate solution methods exist (\citet{bubeck2012regret,kuleshov2014algorithms, lattimore2020bandit}), either based on frequentist or Bayesian approaches. in~\citet{lai1985asymptotically}, frequentist asymptotic lower bounds were found for the number of suboptimal choices made under a broad class of approximate solution methods. In the case of bounded rewards, the asymptotic regret upper bound equals this lower bound  for e.g., approximate solution methods based on an upper confidence bound (UCB) for the expected reward (\citet{ lai1987adaptive,auer2002finite}), proving that these methods are asymptotically optimal in the frequentist framework. 
	Many Bayesian approximate solution methods are asymptotically optimal and were shown to have excellent empirical frequentist performance (\citet{ kaufmann2012bayesian,kaufmann2018bayesian, lattimore2020bandit}). In~\citet{lattimore2016regret} it was shown that, in the frequentist framework, the Gittins index strategy with improper uniform prior for the mean yields an asymptotically optimal approximate solution method for the Gaussian model, outperforming other optimal Bayesian methods such as the Thompson sampling and Bayesian Upper Confidence Bound (Bayes-UCB).
	
	There is a large amount of literature on calculating the Gittins index, starting with the Calibration method introduced in~\citet{gittins1979bandit}. A survey covering most literature up to 2014 on offline and online algorithms to calculate the Gittins index for general countable state FABPs is provided in~\citet{chakravorty2014multi}. In~\citet{yao2006some}, an approximation method for the Gittins index is derived in the Gaussian bandit with unknown mean and known variance. A numerical approximation for this bandit based on quadratic splines is described in~\hbox{\citet{lattimore2016regret}}. In~\citet{edwards2019practical} the lack of general open source code for calculating the Gittins index is acknowledged, and methods and accompanying code, based on the Calibration method in~\citet{gittins1979bandit}, are given to calculate the Gittins index for the Bernoulli and Gaussian bandit with known variance. 
	
	Calculation methods found in literature only work for countable state FABPs or are tailored to the Gaussian bandit with known variance. The calculation methods often revolve around the Bellman equation where it is assumed that the transition probabilities are known in closed form. First, when dealing with experimental data, the models can be much more complex and the assumption of known transition probabilities might not hold. For instance, the main types of outcome data encountered in clinical trials are categorical, continuous or event-times. Clearly only the first of these three is covered by considering a finite or countable state space, as continuous outcome data are often modeled using parameters on an uncountable parameter space. Second, in, e.g., latent variable modeling, the posterior distribution of the model parameters is often not known in closed form. Markov chain Monte Carlo approaches (\citet{gilks1995markov}) provide a means to do (approximate) posterior inference in this case. Hence, the transition structure of the FABP is unknown, however one can still sample (approximately) from the Markov reward process.  Another situation where the posterior is not available in closed form is when the prior being used in the Bayesian analysis is nonconjugate with respect to the likelihood. For instance, when a Bayesian experiment is performed for Gaussian data with possibly conflicting prior information, a nonconjugate Student's t-distribution can be assumed as a  prior distribution for the mean (\citet{neuenschwander2020predictively}).
	No method exists to accurately approximate the Gittins index in these settings. 
	
	To address the open problems listed above, a sampling-based Gittins index approximation (SBGIA) is introduced in  this paper.  The SBGIA can be calculated for any type of state space, also when there is access only to a simulator for future rewards. We first approximate the Gittins index using  a truncation of the optimization horizon and support for the immediate rewards of the Markov chains. Second, we use
	a stochastic approximation procedure \hbox{(\citet{robbins1951stochastic})} based on 
	the optimal stopping value approximation introduced in~\citet{chen2018beating} to find the root (i.e., the fair charge) of the prevailing charge formulation of the Gittins index, see, e.g., \citet[Equation 2]{weber1992gittins}. As the SBGIA is sampling-based, samples from the (approximate) posterior reward distribution can be used to make decisions in the proposed algorithm. 
	
	The paper is organized as follows. Section \ref{FABP} introduces the family of alternative bandit processes. Section \ref{prelim} extends optimal stopping value approximation results from \hbox{\citet{chen2018beating}} to  reward processes that are not restricted to be non-negative.
	In Section \ref{sec:3.3}, based on convergence results provided in~\citet{chen2018beating}, we obtain finite-time convergence results for the SBGIA.
	In Section \ref{sec:3.4} we prove asymptotic convergence results for the stochastic approximation iterates. In Section \ref{sect:numerical_results} we show the performance of the SBGIA in several numerical simulation studies. Appendix~\ref{proofs} states the longer proofs of the theorems in the paper, and Appendix~\ref{table:notation} summarises the notation used in the paper.
	% Acknowledgments here
	
	\section{Family of alternative bandit processes~\label{FABP}}
	We consider $A$ independent %(possibly high-dimensional) 
	Markov~chains~$(\bS_t^a)_t$ for $a\in[A]=\{1,\dots, A\}$, referred to as {\it arms}, each on a (shared) Borel space $(\mathcal{S},\mathcal{G})$  with underlying probability space $(\Omega, \mathcal{F},\mathbb{P})$. The (shared) transition kernel is denoted $\mathbb{P}_\bs$, where $\bs$ denotes the initial state of the Markov chain, and expectations w.r.t.~$\mathbb{P}_\bs$ are denoted $\mathbb{E}_\bs$. 
	Let $R$ denote a  $\mathcal{G}/\mathcal{B}(\mathbb{R})$-measurable reward function. \hbox{See~\citet[Chapter 35]{lattimore2020bandit}} for details on this setting. 
	
	% The goal is to choose an arm $A_t$ to continue at each decision epoch $t$ while collecting the reward corresponding to its state so as to maximize the total sum of discounted rewards.
	Let $\mathcal{P}$ be the set of policies, i.e., the mappings from the set of histories \begin{equation}
		\mathcal{H} = \{(\bs^1_u, \dots, \bs_u^A,a_u)_{u=0}^{t-1}\times (\bs_t^1,\dots, \bs_t^A):t\in\mathbb{N}_0,\; a_u\in[A],\;\bs_u^a\in \mathcal{S}\;\;\;\; \forall a\in [A], u\in\mathbb{N}_0\}\label{defn:histories}
	\end{equation} to the unit-$A$-simplex $\Delta^{{A}}$ of probability vectors over $[A]$. 
	A fixed policy $\pi\in\mathcal{P}$ induces a Markov chain $\left(\left((\bS^a_u)_{u=1}^{N_{a,t}^\pi}\right)_{a\in[A]}\right)_t$, where $N_{a,t}^\pi$ is the number of times arm $a$ is chosen by the policy $\pi$  up to and including time $t$. We denote the  probability measure and expectation under this fixed policy by $\mathbb{P}_\pi$ and  $\mathbb{E}_{\pi}$.
	The objective is to find the optimal policy maximizing the total sum of discounted rewards for a 
	%(known) 
	discount factor $\gamma\in(0,1)$: 
	\begin{equation}\pi^* = \underset{\pi\in\mathcal{P}}{\arg\max}\;\mathbb{E}_\pi 
		\left[ \sum_{t=0}^{\infty}\gamma^tR\left(\bS_{N_{A_t,t}}^{A_t}\right)\right]\label{totdiscounted}.\end{equation}
	We furthermore assume that the reward function for arm $a$, with initial state $\bs^a$, is discounted absolutely convergent in expectation under discount factor $\gamma$
	\begin{assumption} \label{boundedtotrew}  \begin{equation*}C(\bs^a)=\mathbb{E}_{\bs^a}\left[\sum_{t=0}^\infty \gamma^t |R(\bS^a_{t})|\right]<\infty \;\;\;\; \forall \bs^a\in\mathcal{S}.\end{equation*}
	\end{assumption}
	Under Assumption \ref{boundedtotrew}, the $\arg\max$ in \eqref{totdiscounted} is attained, and the optimal policy $\pi^*$ for the Markov decision process above is the policy choosing the arm with the highest  Gittins index, \mbox{see, e.g., \citet[Theorem 35.9]{lattimore2020bandit}.} To specify this, let $\bs^a_h$ be the current state for arm $a$ in history $h\in\mathcal{H}$, then
	\begin{equation}\pi^*(h) \in \underset{a\in[A]}{\arg\max} \;\nu(\bs_{h}^a),\quad \mbox{with}\quad \nu(\bs^a) = \underset{\tau\in\mathcal{T}^a}{\sup}\;\frac{\mathbb{E}_{\bs^a}\left[\sum_{t=0}^{\tau-1}\gamma^t R(\bS^a_{t})\right]}{\mathbb{E}_{\bs^a}\left[\sum_{t=0}^{\tau-1}\gamma^t\right]}\;\;\;\; \forall \bs^a\in\mathcal{S},\label{defGI}\end{equation}
	where  $\mathcal{T}^a$ is the set of stopping times in $\mathbb{N}$ w.r.t.~the filtration $(\mathcal{F}^a_{t})_{t}$  generated by the process $(\bS_t^a)_t$ starting from state $\bs^a$. The {\it Gittins index} $\nu(\bs^a_h)$ hence only depends on the current state $\bs^a_h$ for any arm $a$.
	When ties occur in the above expression, i.e., the $\arg\max$ returns more than one value, the policy uniformly chooses an arm  from the different choices of $\arg\max$. Hence, under this choice, the Gittins index policy is a randomized Markov policy.
	
	The Gittins index can also be written as (\citet[Equation 2]{weber1992gittins})
	\begin{equation}\nu({\bs^a}) = \sup\left\{\nu\;:\: \underset{\tau\in\mathcal{T}^a}{\sup}\;\mathbb{E}_{\bs^a}\left[\sum_{t=0}^{\tau-1}\gamma^t(R(\bS^{a}_{t}) - \nu)\right]\geq 0\right\}\label{defGI2}\;\;\;\; \forall {\bs^a}\in\mathcal{S}.\end{equation}
	Under Assumption \ref{boundedtotrew}, the supremum in the above condition is zero at a unique point $\nu({\bs^a})$ \hbox{(\citet{lattimore2020bandit})}, hence $\nu({\bs^a})$ is the root of the function \begin{equation}\nu\mapsto \underset{\tau\in\mathcal{T}^a}{\sup}\;\mathbb{E}_{\bs^a}\left[\sum_{t=0}^{\tau-1}\gamma^t(R(\bS^a_{t}) - \nu)\right]\label{def_func_nu_gittins}.\end{equation}
	
	% {\color{red} This gives $\nu(\bs^a)$ an interpretation as a {\it fair charge} for choosing arm $a$ at least once. Indeed, under a fixed $\nu $, the stopping time above is also the solution to
		% $$\underset{\tau\in\mathcal{T}^a}{\sup}\;\mathbb{E}_{\bs^a}\left[\sum_{t=0}^{\tau-1}\gamma^tR(\bS^{a}_{t}) + \sum_{t=\tau}^\infty \gamma^t\nu\right],$$
		% which corresponds to a {\it retirement option} \cite{whittle1980multi} in which at each decision epoch the player either chooses to (i) stop the process and collect retirement benefit $\nu$ indefinitely or (ii) obtain $R(\bS^a_{ t})$ before observing the next state $\bS^a_{t+1}$. The fair charge $\nu(\bs^a)$ is then the break even price $\nu$ to be paid at every decision epoch for this option.}
	
	\section{Preliminaries}\label{prelim}
	This section 
	%reviews 
	extends the results on the optimal stopping value approximation introduced in~\citet{chen2018beating} to  reward processes that are not restricted to be non-negative, which are needed to develop our results. Section~\ref{sect: theoretical_results} translates these results to the setting of the  family of alternative bandit processes.
	\subsection{Optimal stopping approximation}\label{sec:3.1}
	%\textcolor{red}{ Y moet S worden, comment waarom het afwijkt van CG}

	% {\color{red} Let $(\Omega,\mathcal{F},\mathbb{P})$ be a probability space. }
	%Define $[t]=\{1,\dots,t\}$ for all $t\in\mathbb{N}$. 
	%  Let $(\bs_t)_{t\in [N]}$ for $N\in\mathbb{N}$ be a % (possibly high-dimensional) 
	%  stochastic process with states in a set $\mathcal{S}$.
	When considering the behavior of only one of the $A$ arms, the superscript $a$ is dropped from the state, filtration, and set of stopping times.
	Let $\ixv{t} = \{0,1,\dots t\}$, $\bS_{\ixt}$ contain the realisations of $\bS$ up to time $t$,  and let $\mathcal{F}_t$ be the smallest sigma algebra for which $\bS_{\ixt}$ is measurable.
	For $N\in\mathbb{N}$, let $\mathcal{T}_N$ denote the set of integer-valued stopping times $\tau$ adapted to $(\mathcal{F}_t)_{t\in [N]}$ such that $\tau\in[N]$ almost surely.
	We assume that $\mathcal{S}$ is a Polish space, ensuring the existence of regular conditional probabilities for $(\bS_t)_{t}$  \hbox{(e.g., \citet[Theorem 12.3.1]{athreya2006measure}).} 
	Let $Z_t = g_t(\bS_{\ixt})$ for measurable real-valued functions $(g_t)_{t\in[N]}$.
	% \textcolor{orange}{Note that the possible time indices for $Z_t$ lie in $\mathbb{N}$ and those for $\bS_t$ lie in $\mathbb{N}_0$, hence to determine $Z_t$ for an arm, we sample that arm at least once. }
	The random variable $Z_t$  is assumed integrable (on the probability space for $\bS$) for all $t$. The goal is to compute
	\begin{equation}
		\inf_{\tau\in\mathcal{T}_N}\mathbb{E}[Z_\tau].\label{optstop}
	\end{equation}
	The following two results extend Theorems 1 and 2  in~\citet{chen2018beating} to remove the restriction that $Z$ is non-negative. The proofs readily follow along the lines of the proofs in  \citet{chen2018beating}. 
	%are presented where it was seen that with some slight adaptations of the proofs $Z$ is no longer restricted to be nonnegative.
	Theorem 1  \mbox{expresses \eqref{optstop}} as an infinite sum. For $Z_t$ bounded, Theorem 2 provides an error bound for truncation of the infinite sum.
	\begin{theorem}[{\sc Optimal stopping value representation}]\label{thm:infsumapprox}
		\begin{equation*}\inf_{\tau\in\mathcal{T}_N}\mathbb{E}[Z_\tau] = \sum_{k=1}^\infty \mathbb{E}\left[\min_{u\in[N]} Z_u^{(k)}\right]\label{infsumversion},\end{equation*} where for all $k\in\mathbb{N}$ and $t\in [N]$
		\begin{align}
			Z_t^{(1)}&=Z_t,\label{defZ1}\\
			Z_t^{(k+1)}&=Z_t^{(k)} - \mathbb{E}\left[\min_{u\in[N]}Z_u^{(k)}\Big|\mathcal{F}_t\right]\label{defZk}.
		\end{align}
	\end{theorem}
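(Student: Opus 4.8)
The plan is to convert the recursion into a telescoping identity for the optimal stopping values and then to show the leftover term vanishes. Write $m^{(k)}=\min_{u\in[N]}Z_u^{(k)}$ and let $Q^{(k)}_t=\mathbb{E}[m^{(k)}\mid\mathcal{F}_t]$ be its Doob martingale, so that $Z^{(k+1)}_t=Z^{(k)}_t-Q^{(k)}_t$. First I would check by induction on $k$ that each $Z^{(k)}_t$ is integrable and $\mathcal{F}_t$-measurable, which makes $m^{(k)}$ integrable and the conditional expectations well defined. The key observation is that every $\tau\in\mathcal{T}_N$ is bounded by $N$, so optional sampling gives $\mathbb{E}[Q^{(k)}_\tau]=\mathbb{E}[m^{(k)}]$; concretely $\mathbb{E}[Q^{(k)}_\tau]=\sum_{t}\mathbb{E}[\mathbb{E}[m^{(k)}\mid\mathcal{F}_t]\mathbbm{I}(\tau=t)]=\sum_t \mathbb{E}[m^{(k)}\mathbbm{I}(\tau=t)]=\mathbb{E}[m^{(k)}]$ since $\{\tau=t\}\in\mathcal{F}_t$. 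Hence $\mathbb{E}[Z^{(k+1)}_\tau]=\mathbb{E}[Z^{(k)}_\tau]-\mathbb{E}[m^{(k)}]$, a shift independent of $\tau$, so taking the infimum over $\tau\in\mathcal{T}_N$ gives $\inf_\tau\mathbb{E}[Z^{(k+1)}_\tau]=\inf_\tau\mathbb{E}[Z^{(k)}_\tau]-\mathbb{E}[m^{(k)}]$. Telescoping from $k=1$ to $K$ yields
\[
\inf_{\tau\in\mathcal{T}_N}\mathbb{E}[Z_\tau]=\sum_{k=1}^{K}\mathbb{E}[m^{(k)}]+\inf_{\tau\in\mathcal{T}_N}\mathbb{E}[Z^{(K+1)}_\tau].
\]

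The remaining task is to show the residual $r_K:=\inf_{\tau}\mathbb{E}[Z^{(K+1)}_\tau]$ tends to $0$. I would first prove $Z^{(k)}_t\ge 0$ for all $k\ge 2$ by induction: since $m^{(k)}\le Z^{(k)}_t$ pointwise and $Z^{(k)}_t$ is $\mathcal{F}_t$-measurable, $Q^{(k)}_t=\mathbb{E}[m^{(k)}\mid\mathcal{F}_t]\le Z^{(k)}_t$, whence $Z^{(k+1)}_t\ge 0$; the base case uses the same inequality with $Z^{(1)}_t=Z_t$. This is precisely where the non-negativity hypothesis of \citet{chen2018beating} can be dropped: whatever the sign of $Z$, the process $Z^{(k)}$ is automatically non-negative from $k=2$ on. Non-negativity forces $Q^{(k)}_t\ge 0$, hence $Z^{(k+1)}_t\le Z^{(k)}_t$ for $k\ge 2$; thus the residuals $r_K$ are non-increasing and non-negative, and the series has non-negative terms whose partial sums are bounded by $\inf_\tau\mathbb{E}[Z_\tau]<\infty$. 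Convergence of the series then forces $\mathbb{E}[m^{(k)}]\to 0$.

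To finish I would identify the monotone limit $Z^{(\infty)}_t=\lim_k Z^{(k)}_t\ge 0$, which is $\mathcal{F}_t$-measurable. Dominated convergence (the $m^{(k)}$ decrease and are squeezed between $0$ and the integrable $m^{(2)}$) gives $\mathbb{E}[\min_{u}Z^{(\infty)}_u]=\lim_k\mathbb{E}[m^{(k)}]=0$, so $\min_{u\in[N]}Z^{(\infty)}_u=0$ almost surely. The decisive point is that this anticipating minimum is attained by a genuine stopping rule: $\tau^\ast=\min\{t\in[N]:Z^{(\infty)}_t=0\}$ defines an element of $\mathcal{T}_N$ (the hitting set is non-empty a.s.\ and $\{\tau^\ast=t\}\in\mathcal{F}_t$ because each $Z^{(\infty)}_s$ is $\mathcal{F}_s$-measurable), with $Z^{(\infty)}_{\tau^\ast}=0$. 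Therefore $0\le r_K\le\mathbb{E}[Z^{(K+1)}_{\tau^\ast}]\to\mathbb{E}[Z^{(\infty)}_{\tau^\ast}]=0$ by dominated convergence, so $r_K\to 0$, and letting $K\to\infty$ in the telescoped identity gives the representation.

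I expect the collapse of the residual to be the crux. The algebraic telescoping is routine, but showing that the leftover optimal stopping value vanishes requires the monotone limit $Z^{(\infty)}$, the reduction $\min_u Z^{(\infty)}_u=0$, and the observation that the anticipating argmin becomes an adapted hitting time $\tau^\ast$ in the limit. The secondary technical points are preserving integrability along the recursion and justifying optional sampling, which is legitimate here only because $\tau\le N$.
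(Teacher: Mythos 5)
Your proof is correct and follows essentially the same route as the paper, which simply defers to the telescoping-plus-vanishing-residual argument of \citet{chen2018beating} and notes, exactly as you do, that $(Z_t^{(k)})_{k\geq 2}$ is automatically non-negative and decreasing even when $Z$ itself is not non-negative. Your reconstruction of the residual collapse via the monotone limit $Z^{(\infty)}$ and the adapted hitting time $\tau^\ast$ of its zero set is precisely the mechanism of the cited Lemma 1 and Theorem 1, so there is nothing to add.
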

	\begin{proof}
		The proof follows the proof of Lemma 1 and Theorem 1 in~\citet{chen2018beating} noting that, even though $Z$ is not assumed non-negative in this case, the sequence $(Z_t^{(k)})_{k\geq 2}$ remains a non-negative decreasing sequence of random variables for all $t\in [N]$. Hence $\lim_{k\rightarrow\infty}\inf_{\tau\in\mathcal{T}_N}\mathbb{E}[Z_\tau^k]=0$ still holds.  \end{proof}

	\begin{theorem}[{\sc Optimal stopping approximation}]\label{theorem:approxthm}
		Suppose $Z_t\in [a,\,b]$ almost surely for all $t\in[N]$ for some $a,\,b\in\mathbb{R}$ such that $a<b$, then
		\begin{equation}
			0\leq \inf_{\tau\in\mathcal{T}_N}\mathbb{E}[Z_\tau] - \sum_{k=1}^K \mathbb{E}\left[\min_{u\in[N]} Z_u^{(k)}\right]\leq \frac{b-a}{K+1}.\label{Zbound}
		\end{equation} 
	\end{theorem}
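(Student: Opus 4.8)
The plan is to read the truncation error straight off the series representation of Theorem~\ref{thm:infsumapprox} and then bound its tail. Write $\delta_k:=\mathbb{E}[\min_{u\in[N]}Z_u^{(k)}]$ and $V^{(k)}:=\inf_{\tau\in\mathcal{T}_N}\mathbb{E}[Z_\tau^{(k)}]$, so that Theorem~\ref{thm:infsumapprox} reads $V^{(1)}=\sum_{k=1}^\infty\delta_k$ and the quantity in \eqref{Zbound} is the tail $V^{(1)}-\sum_{k=1}^K\delta_k=\sum_{k=K+1}^\infty\delta_k$. First I would note that iterating the recursion \eqref{defZ1}--\eqref{defZk} starting from the (integrable, $\mathcal{F}_t$-adapted) process $Z^{(K+1)}$ reproduces $(Z^{(K+1+j)})_{j\ge0}$, so applying Theorem~\ref{thm:infsumapprox} to $Z^{(K+1)}$ identifies the tail as $\sum_{k=K+1}^\infty\delta_k=V^{(K+1)}$. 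The entire problem then reduces to estimating $V^{(K+1)}=\inf_{\tau\in\mathcal{T}_N}\mathbb{E}[Z_\tau^{(K+1)}]$.

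The lower bound is immediate: as already recorded in the proof of Theorem~\ref{thm:infsumapprox}, $(Z_t^{(k)})_{k\ge2}$ is nonnegative for every $t$, so for $k\ge2$ we have $\min_{u\in[N]}Z_u^{(k)}\ge0$ and hence $\delta_k\ge0$. Since $K\ge1$, every summand in $\sum_{k=K+1}^\infty\delta_k$ has index $k\ge2$, giving $V^{(K+1)}\ge0$.

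For the upper bound I would first reduce to a nonnegative process. Replacing $Z$ by the shifted process $Z-a\in[0,b-a]$ leaves $Z^{(k)}$, and hence $\delta_k$, unchanged for every $k\ge2$ (a one-line check shows $(Z-a)^{(2)}=Z^{(2)}$, and the recursion then propagates the equality to all $k\ge2$), while making $Z^{(1)}$ nonnegative and forcing $V^{(1)}\in[0,b-a]$ via $\inf_\tau\mathbb{E}[Z_\tau]\le\mathbb{E}[Z_N]\le b$ and $\min_u Z_u\ge a$. On this normalised process all iterates lie in $[0,b-a]$, and the core is a sharp one-step estimate: for every $k$ the marginal term removed from the value dominates a quadratic in the remaining value, \[ \delta_k\ \ge\ \frac{\big(V^{(k)}\big)^2}{\,b-a\,}, \] whence $V^{(k+1)}=V^{(k)}-\delta_k\le V^{(k)}-(V^{(k)})^2/(b-a)$. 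Feeding this recursion into an induction with base $V^{(1)}\le b-a$ and using that $y\mapsto y-y^2/(b-a)$ has maximum $(b-a)/4$ and is increasing on $[0,(b-a)/2]$, one obtains $V^{(k)}\le (b-a)/k$ for all $k\ge1$, and in particular $V^{(K+1)}\le (b-a)/(K+1)$, which is \eqref{Zbound}. Throughout, the argument mirrors the proof of Theorem~2 in~\citet{chen2018beating}, the only novelty being that $[a,b]$ need no longer contain $0$, handled by the shift above.

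I expect the main obstacle to be the sharp quadratic inequality $\delta_k\ge (V^{(k)})^2/(b-a)$, in particular pinning the constant down to $1$ so that the induction closes at the optimal rate $1/(K+1)$ rather than a weaker $1/K$. A naive threshold stopping rule for $Z^{(k)}$ (stop the first time the level drops below $\theta$, optimised over $\theta$) only yields $\delta_k\gtrsim (V^{(k)})^2/\big(4(b-a)\big)$, so the quantitative heart of the proof is the construction of a stopping rule achieving the constant $1$ (a value attained, e.g., by a deterministic-then-$\pm$ two-step process, which shows the inequality is tight). The remaining steps—integrability, the tail identification, the range normalisation, and the elementary induction—are routine.
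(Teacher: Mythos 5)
Your proposal is correct and takes essentially the same route as the paper: both reduce the general interval $[a,b]$ to a normalized case by an affine transformation (the paper rescales to $[0,1]$ via the identity $T((c_1-c_2)Z+c_3)=(c_1-c_2)T(Z)$, you shift to $[0,b-a]$; the two are interchangeable), identify the truncation error with $\inf_{\tau\in\mathcal{T}_N}\mathbb{E}[Z_\tau^{(K+1)}]$, and rest the $1/(K+1)$ rate on Theorem 2 of Chen and Goldberg (2018). The only difference is that the paper simply cites that theorem, whereas you attempt to reproduce its internals and correctly flag that the quadratic inequality $\mathbb{E}[\min_{u\in[N]}Z_u]\ge(\inf_{\tau\in\mathcal{T}_N}\mathbb{E}[Z_\tau])^2/(b-a)$ with constant exactly $1$ (rather than the $1/2$ or $1/4$ obtainable from naive threshold rules) is the nontrivial step; that inequality is precisely the lemma established in the cited reference, so your argument is complete to the same standard as the paper's, though a fully self-contained write-up would still need to supply that lemma's proof rather than assert it.
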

	\begin{proof}
		The proof for $a=0,\;b=1$ follows from  \citet[Theorem 2]{chen2018beating}. The extension to general closed intervals follows by considering the following mapping between stochastic processes
		$$T(Z) = \left( Z_t - \mathbb{E}\left[\min_{u\in[N ]}Z_u\Big|\mathcal{F}_t\right]\right)_{t\in\mathbb{N}}. $$
		We have that $T((c_1-c_2) Z + c_3 ) = (c_1-c_2)T(Z)$ for all $c_1,\,c_2,\;c_3\in\mathbb{R}$. Hence, letting \\$\tilde{Z} = (Z-a)/(b-a)$ we have from \citet[Theorem 2]{chen2018beating} that
		$$ \inf_{\tau\in\mathcal{T}_N}\mathbb{E}[Z_\tau^{(k+1)}] =\inf_{\tau\in\mathcal{T}_N}\mathbb{E}[T^{k}({Z})_{\tau}]= (b-a)\inf_{\tau\in\mathcal{T}_N}\mathbb{E}[T^{k}(\tilde{Z})_{\tau}]=(b-a)\inf_{\tau\in\mathcal{T}_N}\mathbb{E}[\tilde{Z}_\tau^{(k+1)}]\leq \frac{b-a}{K+1}.$$
	\end{proof}
	\subsection{Simulation approximation }\label{sec: 3.2}
	Following \citet{chen2018beating}, the sum of expectations in \eqref{Zbound} may be approximated via simulation. Let $[\bv,\bw]$ denote concatenation of the vectors $\bv$ and $\bw$. 
	Let $\bm 1_m$ be the all-ones vector in $\mathbb{R}^m$ for all $m\in\mathbb{N}$.
	For each index $\bi\in\cup_{k=1}^{K}\mathbb{N}^k$ let
	$\bS^{\bi}$ be versions of $\bS$ such that $\bS^{\bi}=\bS^{\bj}$ if $\bj=[\bi,\bm 1_m]$, and $\bS^\bi$ is independent of $\bS^{\bj}$ otherwise.
	
	Let $K\in\mathbb{N}$ and $n(\bi,k) \in \mathbb{N}$ for all $\bi,k$. Recall \eqref{defZ1} and \eqref{defZk}, for all $k<K$ define the random processes 
	%\label{Z_samp_def}
	\begin{align}
		Z^{(1)}_{\bi, t, n}&= g_t(\bS^{\bi}_{\ixt}), \label{Zsamp1}\\
		Z^{(k+1)}_{\bi, t, n} &= Z^{(k)}_{[\bi,1], t, n} - \frac{1}{n(\bi,k)}\sum_{j=2}^{n(\bi,k)+1}\left(\min_{u\in[N]}Z^{(k)}_{[\bi,j],u,n}\Big|\{\bS^{[\bi,j]}_{\ixt} = \bS^{\bi}_{\ixt}\}\right), \label{Zsamp2}
	\end{align}
	where $Z^{(k)}_{[\bi,j],u,n}\Big|\{\bS^{[\bi,j]}_{\ixt} = \bS^{\bi}_{\ixt}\}$ denotes the random variable $Z^{(k)}_{[\bi,j],u,n}$ conditioned on the event that the paths of the two processes $\bS^{[\bi,j]}$ and $\bS^{[\bi]}$ are equal up to time  $t$.  After time $t$ they continue independently (as $j\geq 2$). Note that this random variable is well defined as the regular conditional probabilities for $(\bS_t)_t$ exist (Section \ref{sec:3.1}).  The requirement $\bS^{\bi}=\bS^{\bj}$ if $\bj=[\bi,\bm 1_m]$ induces that the  ``right" partial paths of $\bS^\bi$ used at a lower level are used to construct values $Z_{\bi,t,n}^{(k)}$ at higher \mbox{level $k$}. To illustrate our notation and the relation between  \eqref{defZ1}, \eqref{defZk} and  \eqref{Zsamp1}, \eqref{Zsamp2}, note that $Z^{(2)}_{[2,j_1],t,n}$ equals
	\begin{align*}
		&Z^{(1)}_{[2,j_1,1],t,n} - \frac{1}{n([2,j_1],1)}\sum_{j_2=2}^{n([2,j_1],1)+1}\left(\min_{u\in[N]}Z^{(1)}_{[2,j_1,j_2],u,n}\Big|\{\bS^{[2,j_1,j_2]}_{\ixt} = \bS^{[2,j_1]}_{\ixt}\}\right)\\
		& \approx\; Z^{(1)}_{[2,j_1,1],t,n} - \mathbb{E}\left[\min_{u\in[N]}Z_{[2,j_1,1],u, n}^{(1)}\Big|\{\bS^{[2,j_1,1]}_{\ixt} = \bS^{[2,j_1]}_{\ixt}\}\right] = Z^{(1)}_{[2,j_1,1],t,n} - \mathbb{E}\left[\min_{u\in[N]}Z_{[2,j_1,1],u, n}^{(1)}\Big|\mathcal{F}_t^{[2,j_1,1]}\right],
	\end{align*}
	where
	$Z^{(1)}_{[2,j_1,j_2],t,n} = g_t(\bS^{[2,j_1,j_2]}_{\ixt})$, $\mathcal{F}_t^{[2,j_1,1]}$ is the sigma algebra generated by $\bS_{\ixt}^{[2,j_1,1]}$, and the approximation is exact in the limit $n([2,j_1],1)\rightarrow \infty$ by the law of large numbers. 
	Note that the process $\bS^{[2,j_1,1]}$ used to determine $Z^{(1)}_{[2,j_1,1],t,n}$ equals $\bS^{[2,j_1]}$. Hence, corresponding to conditioning on $\mathcal{F}_t$ above, in order to determine $Z^{(2)}_{[2,j_1],t,n}$ it is assumed that the history of the Markov process up to time $t$ is the same for all $Z^{(1)}_{[2,j_1,j_2],t,n}$ after which the processes continue independently. 
	
	Consider the following random variable projected on $[a,b]$:
	\begin{equation}V_{n}^{(K)} = \max\left(a,\;\min\left(b,\;
		\frac{1}{n(K,K)}\sum_{j=1}^{n(K,K)}\sum_{k=1}^K \min_{u\in [N]}Z_{[k,j],u, n}^{(k)}\right) \right).\label{eqn:Vsamp}\end{equation}
	By the law of large numbers and Theorem~\ref{thm:infsumapprox}, we have that  $$\lim_{K\rightarrow\infty}\lim_{n\rightarrow  \infty}V_n^{(K)}= \lim_{K\rightarrow\infty}\sum_{k=1}^K \mathbb{E}\left[\min_{u\in[N]} Z_u^{(k)}\right]=\inf_{\tau\in\mathcal{T}_N}\mathbb{E}[Z_\tau],$$
	where $n\rightarrow\infty$ indicates the element-wise limit $n(\bi,k)\rightarrow \infty$ for all $(\bi,k)$.
	%at the function $n$ always returns $\infty$ in the limit.

	Following \citet{chen2018beating}, we may  show that for every $\xi>0$ and $\delta\in(0,1)$ we can choose a function $n_{\xi,\delta}$, given implicitly in Algorithms $\mathcal{B}^k$ and $\hat{\mathcal{B}}^k$ in~\hbox{\citet[Pages 27 and 30]{chen2018beating}},  such that 
	$$\mathbb{P}\left(\left|\sum_{k=1}^K\mathbb{E}\left[\min_{u\in[N]}Z_u^{(k)}\right]-  V_{n_{\xi,\delta}}^{(K)}  \right|\leq \xi/2\right)\geq 1-\delta.  $$
	This statement was  shown in~\citet{chen2018beating} for non-projected $V_{n_{\xi,\delta}}^{(K)}$. However, the projection on $[a,b]$ can only reduce the error $|\sum_{k=1}^K\mathbb{E}[\min_{u\in[N]}Z_u^{(k)}]-  V_{n_{\xi,\delta}}^{(K)} |$  so the statement still holds for $V_{n_{\xi,\delta}}^{(K)}$ as defined in \eqref{eqn:Vsamp}.
	Hence, choosing $K(\xi) = \lfloor2(b-a)/\xi\rfloor$ we have by Theorem~\ref{theorem:approxthm} and the triangle inequality that
	$$\mathbb{P}\left(\left|\inf_{\tau\in\mathcal{T}_N}\mathbb{E}[Z_\tau]-  V_{n_{\xi,\delta}}^{(K(\xi))}  \right|\leq \xi\right)\geq 1-\delta.  $$
	\section{Gittins index approximation}~\label{sect: theoretical_results}
	This section first introduces our main method, which is a sampling-based method for Gittins index approximation.  Section \ref{sec:3.3} develops finite-time bounds for the approximation, and Section \ref{sec:3.4} develops asymptotic convergence results.

	Combining the results of Sections  \ref{FABP} and \ref{sec:3.1}, we define for some  $R_\ell, R_u\in\mathbb{R}$ such that $R_\ell<R_u$ 
	%$\tilde{Z}_t = g^\nu_t(\bS_{\ixt})$, where 
	\begin{equation}
		\tilde{Z}_t(\nu) = g^\nu_t(\bS_{\ixt}) =\frac{1-\gamma}{2(R_u-R_\ell)(1-\gamma^N)} \sum_{u=0}^{t-1}\gamma^u (\nu-R(\bS_u)),\label{Z_unn}
	\end{equation} 
	which is the argument in \eqref{def_func_nu_gittins}%such that $ g^\nu_t(\bS_{\ixt}) \in  (-1/2,1/2)$ 
	, scaled by $c=\frac{1-\gamma}{2(R_u-R_\ell)(1-\gamma^N)}$ for later convenience.
	
	We now introduce our sampling-based Gittins index approximation.\\
	
	%\begin{approximation} ~\\% {\bf 
	{\sc Sampling-based Gittins index approximation (SBGIA)}
	\begin{itemize}
		\item {\bf Approximation I: truncation}\\
		Truncate the support of $\tilde{Z}_t$ and the time horizon for the optimal stopping problem~\eqref{defGI2}:
		\begin{equation}\nu_\sigma(\bs)= \sup\left \{\nu:\inf_{\tau\in\mathcal{T}_\sigma}\mathbb{E}_\bs[\tilde{Z}_\tau(\nu)]\leq 0 \right\}\approx \sup\left \{\nu:\sup_{\tau\in\mathcal{T}}\mathbb{E}_\bs\left[\sum_{t=0}^{\tau-1}\gamma^t (R(\bS_t)-\nu)\right]\geq 0 \right\} ,\label{truncGI}\end{equation}
		where the infimum is taken over the set of stopping times $\mathcal{T}_\sigma=\{\tau\in\mathcal{T}:\tau\leq \sigma\}$
		with, for a choice of $N\in\mathbb{N}$,
		\begin{equation}
			\sigma = \sigma_H\wedge N, \;\;\;\; \sigma_H = \inf\{t\in\mathbb{N}: R(\bS_t)\notin [R_\ell,R_u]\}\label{def:stoppingtime}
		\end{equation}
		where the minimum operator is denoted with $\wedge$.
		%for some choice of $R_\ell, R_u\in\mathbb{R}$ such that $R_\ell<R_u$.
		Note that $\nu_\sigma$ only considers $R(\bS_t)\in [R_\ell,R_u]$. Hence,  using the stopped (bounded) process $Z=\tilde{Z}^{\sigma} $ such that $\tilde{Z}_t^{\sigma}=Z_{t\wedge\sigma}$ for all $t$, $\nu_\sigma$ in \eqref{truncGI}  can also be formulated  as
		\begin{equation}\nu_{\sigma}(\bs)=\sup\left \{\nu:\inf_{\tau\in\mathcal{T}_N}\mathbb{E}_\bs[Z_\tau(\nu)]\leq 0 \right\}.\label{final_approx_GI}\end{equation}
		Boundedness of $Z$ allows using the results stated in Section~\ref{sec:3.1}.
		\item {\bf Approximation II: simulation}\\
		Highlighting the dependence on the state $\bs$ and current estimate $\nu$ only, we sample $V_{\bs}(\nu)~=~V^{(K(\xi),N)}_{\bs,n_{\xi,\delta}}(\nu)$  truncated to $[-1/2,\,1/2]$ by sampling the respective processes $\bS^\bi$, from the Markov kernel $\mathbb{P}_{\bs}$ starting from state $\bs$,  needed to determine $Z^{(K(\xi))}_{\bi,t,n_{\xi,\delta}}$ in  \eqref{Zsamp1}, \eqref{Zsamp2} and combining them in \eqref{eqn:Vsamp} such that
		\begin{equation}\mathbb{P}\left(\Bigg| \inf_{\tau\in\mathcal{T}_N}\mathbb{E}_\bs[Z_\tau(\nu)] -V_{\bs}(\nu) \Bigg|\leq \xi\right) \geq 1-\delta.\label{PACbound_V}\end{equation}
		Using this sampling procedure, we approximate $\nu_\sigma(\bs)$ using stochastic approximation  \hbox{(\citet{borkar2009stochastic}),} i.e., a stochastic root-finding procedure, starting from an initial point \hbox{$\nu_0(\bs)\in[R_\ell,R_u]$} such that
		\begin{equation}
			\nu_{m+1}(\bs) = \nu_m(\bs) - \alpha_mV_{\bs,m}(\nu_m(\bs)),\label{stoch_approx}
		\end{equation}
		where $V_{\bs,m}$ are independent versions of $V_{\bs}$, and $(\alpha_m)_m$ is a possibly stochastic, predictable non-negative sequence of step-sizes in $\mathbb{R}$. We collect $\nu_M(\bs)$ as our sampling-based approximation of the Gittins index $\nu(\bs)$, where $M$ is defined according to a certain (user-defined) stopping criterion (see Remark~\ref{rem:criterion}).
	\end{itemize}
	%\end{approximation}

	\subsection{Finite-time error bounds}\label{sec:3.3}
	In this section, we first derive a truncation error bound for the first-stage approximation (Theorem~\ref{Thm:Error_cutoff}). Subsequently, we couple the stochastic approximation iterates from the second-stage approximation  to stochastic approximation iterates  from a continuous increasing function (Lemma~\ref{lemma:coupled_boven_onder}). Using a mean-squared error recursion result for these coupled sequences (Theorem  \ref{theorem:FTCB}), we then construct a confidence interval for the Gittins index in finite-time, where ``finite-time'' pertains to the number of stochastic approximation iterates~(Theorem~\ref{FTCI}). Using finite-time bounds, we construct an $\epsilon$-optimal policy for the family of alternative bandit processes~(Theorem~\ref{Thm:epsilonoptimal}).
	%defined in Section \ref{FABP}.
	
	An upper bound on the error when approximating $\nu(\bs)$ by the truncation-based index $\nu_\sigma(\bs)$ as defined in \eqref{truncGI} is given in the following theorem, which holds for a  general stopping time $\sigma$. A similar bound was given in~\citet{wang1997error} for a fixed truncation $N$ of the time horizon. 
	\begin{theorem}[{\sc Truncation error bound}] \label{Thm:Error_cutoff}
		\thmBoundtrunc{defGI_restricted}
	\end{theorem}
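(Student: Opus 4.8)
The plan is to reduce every claim to analytic properties of the scalar function $f_\bs^\sigma(\nu)=\inf_{\tau\in\mathcal{T}_\sigma}\mathbb{E}_\bs[\tilde{Z}_\tau(\nu)]$. For each fixed $\tau\in\mathcal{T}_\sigma$ one has $\tau\le\sigma\le\sigma_H$, so every reward $R(\bS_u)$ appearing in $\tilde{Z}_\tau(\nu)=c\sum_{u=0}^{\tau-1}\gamma^u(\nu-R(\bS_u))$ lies in $[R_\ell,R_u]$; hence $\nu\mapsto\mathbb{E}_\bs[\tilde{Z}_\tau(\nu)]=c\,\mathbb{E}_\bs[\sum_{u=0}^{\tau-1}\gamma^u]\,\nu-c\,\mathbb{E}_\bs[\sum_{u=0}^{\tau-1}\gamma^u R(\bS_u)]$ is a finite affine function with slope $c\,\mathbb{E}_\bs[\sum_{u=0}^{\tau-1}\gamma^u]\ge c>0$ (using $\tau\ge1$). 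Consequently $f_\bs^\sigma$, as an infimum of affine functions, is concave and therefore continuous, and the uniform slope lower bound gives $f_\bs^\sigma(\nu_2)-f_\bs^\sigma(\nu_1)\ge c(\nu_2-\nu_1)$ for $\nu_1<\nu_2$, i.e.\ strict monotonicity. Taking $\tau\equiv1$ shows $f_\bs^\sigma(\nu)\le c(\nu-R(\bs))\to-\infty$ as $\nu\to-\infty$, while for $\nu>R_u$ every summand is nonnegative so $f_\bs^\sigma(\nu)\ge c(\nu-R_u)\to+\infty$. A continuous, strictly increasing surjection onto $\mathbb{R}$ has a unique root $\nu_\sigma(\bs)$, and since $\{\nu:f_\bs^\sigma(\nu)\le0\}=(-\infty,\nu_\sigma(\bs)]$ its supremum is attained precisely at that root; this proves the first two assertions. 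For attainment of the infimum I would pass to the stopped, bounded process $Z=\tilde{Z}^\sigma$ and use $\inf_{\tau\in\mathcal{T}_\sigma}\mathbb{E}_\bs[\tilde{Z}_\tau(\nu)]=\inf_{\tau\in\mathcal{T}_N}\mathbb{E}_\bs[Z_\tau(\nu)]$; the latter is a finite-horizon optimal stopping problem with bounded (hence integrable) payoffs, so the Snell-envelope/backward-induction construction yields an optimizer $\tau^*$, and its truncation $\tau_\sigma(\nu,\bs)=\tau^*\wedge\sigma\in\mathcal{T}_\sigma$ attains the infimum.

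For the lower bound $\nu(\bs)-\nu_\sigma(\bs)\ge0$ I would use only monotonicity in the stopping class. Since $\mathcal{T}_\sigma\subseteq\mathcal{T}$, taking the infimum over the smaller family can only increase it, so $f_\bs^\sigma\ge f_\bs$ pointwise, where $f_\bs(\nu)=\inf_{\tau\in\mathcal{T}}\mathbb{E}_\bs[\tilde{Z}_\tau(\nu)]$ is the scaled, sign-flipped function whose root is the true index $\nu(\bs)$ by \eqref{defGI2}--\eqref{def_func_nu_gittins}. Evaluating at $\nu_\sigma(\bs)$ gives $f_\bs(\nu_\sigma(\bs))\le f_\bs^\sigma(\nu_\sigma(\bs))=0=f_\bs(\nu(\bs))$, and since $f_\bs$ is (by the same argument, over $\mathcal{T}$) strictly increasing, this forces $\nu_\sigma(\bs)\le\nu(\bs)$.

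The upper bound is the substantial part. Writing $G^{\bs'}(\nu)=\sup_{\tau\in\mathcal{T}}\mathbb{E}_{\bs'}[\sum_{t=0}^{\tau-1}\gamma^t(R(\bS_t)-\nu)]$ for the continuation value at charge $\nu$ from state $\bs'$ (so $G^{\bs}(\nu(\bs))=0$ and $G^{\bs'}(\nu)\ge0\iff\nu\le\nu(\bs')$), I would take an optimizer $\tau^*$ of the unrestricted problem at the fixed charge $\nu^*=\nu(\bs)$ and split it at the truncation time $\sigma$. The prefix $\tau^*\wedge\sigma$ lies in $\mathcal{T}_\sigma$, and applying the strong Markov property at the bounded stopping time $\sigma$ to the suffix gives $0=G^{\bs}(\nu^*)=\mathbb{E}_\bs[\sum_{t=0}^{(\tau^*\wedge\sigma)-1}\gamma^t(R(\bS_t)-\nu^*)]+\mathbb{E}_\bs[\mathbbm{I}\{\tau^*>\sigma\}\gamma^\sigma G^{\bS_\sigma}(\nu^*)]$. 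Bounding the prefix expectation by $\sup_{\tau\in\mathcal{T}_\sigma}(\cdot)=-f_\bs^\sigma(\nu^*)/c$ and translating the resulting control on $f_\bs^\sigma(\nu^*)$ through the slope-$\ge c$ monotonicity of $f_\bs^\sigma$ (the factor $c$ cancels) converts it into the clean estimate $\nu(\bs)-\nu_\sigma(\bs)\le\mathbb{E}_\bs[\gamma^\sigma (G^{\bS_\sigma}(\nu^*))^+]$ in terms of the discounted post-truncation continuation value.

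The crux is to turn this continuation value into the stated closed form. Continuation past $\sigma$ is beneficial only when the continuation-state index exceeds the charge, which is exactly why the positive part $\nu(\bS_\sigma)^+$---and not $\nu(\bS_\sigma)$---appears, states with $\nu(\bS_\sigma)\le0$ contributing nothing. I would bound $(G^{\bS_\sigma}(\nu^*))^+$ in terms of $\nu(\bS_\sigma)^+$ using the ratio representation \eqref{defGI} of the index (every admissible ratio is $\le\nu(\bS_\sigma)$), and then observe that the continuation problem from $\bS_\sigma$ again faces the same truncation at its own copy of $\sigma$; iterating this decomposition over successive truncation epochs is intended to produce a geometric series with ratio $\mathbb{E}_\bs[\gamma^\sigma]$, summing to $1/(1-\mathbb{E}_\bs[\gamma^\sigma])$ and yielding the claimed bound. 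I expect the main obstacle to be making this recursion rigorous: justifying the strong Markov application and the measurability of $\bs'\mapsto\nu(\bs')$ and $\bs'\mapsto G^{\bs'}(\nu^*)$, exchanging the supremum with the splitting, and above all verifying that the telescoping collapses \emph{exactly} to the factor $1/(1-\mathbb{E}_\bs[\gamma^\sigma])$ rather than to the looser $1/(1-\gamma)$ that a single-step estimate of the continuation value would give.
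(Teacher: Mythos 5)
Your handling of the first two assertions and of the inequality $\nu_\sigma(\bs)\le\nu(\bs)$ is sound, and somewhat more self-contained than the paper's: you argue directly that $f_\bs^\sigma$ is an infimum of affine functions with slopes uniformly bounded below by $c$, hence concave, continuous, strictly increasing and surjective, whereas the paper identifies $\nu_\sigma(\bs)$ with the Gittins index of the augmented chain $\tilde{\bS}_t=(\bS_u)_{u=0}^t$ with rewards $\tilde{R}(\tilde{\bS}_t)=R(\bS_t)\mathbb{I}(t<\sigma)$ and then cites the standard existence/uniqueness results for that index. Either route works for those parts (only note that $R(\bS_0)=R(\bs)$ need not lie in $[R_\ell,R_u]$ since $\sigma_H\ge1$, a harmless detail).

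The genuine gap is exactly where you flag it: passing from $\mathbb{E}_\bs[\gamma^\sigma (G^{\bS_\sigma}(\nu^*))^+]$ to $\mathbb{E}_\bs[\gamma^\sigma\nu(\bS_\sigma)^+]/(1-\mathbb{E}_\bs[\gamma^\sigma])$. A one-step estimate gives $(G^{\bS_\sigma}(\nu^*))^+\le(\nu(\bS_\sigma)-\nu^*)^+/(1-\gamma)$, which is a different bound, and the proposed repair---iterating the split over successive truncation epochs---cannot yield the stated constant: the conditional factors $\mathbb{E}[\gamma^{\sigma_{k+1}-\sigma_k}\mid\mathcal{F}_{\sigma_k}]$ at later epochs are not equal to $\mathbb{E}_\bs[\gamma^\sigma]$ once the chain no longer starts from $\bs$, and each iteration contributes $\nu(\bS_{\sigma_k})^+$ evaluated at the $k$-th epoch rather than reproducing $\nu(\bS_\sigma)^+$ at the first, so the series is neither geometric with ratio $\mathbb{E}_\bs[\gamma^\sigma]$ nor does it collapse onto the single term $\mathbb{E}_\bs[\gamma^\sigma\nu(\bS_\sigma)^+]$. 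The paper needs no recursion at all: it writes both indices in ratio form with the \emph{common} denominator $\mathbb{E}_\bs[\sum_{t=0}^{\tau-1}\gamma^t]$ (the truncated index is the Gittins index of the zeroed-out reward process, so its denominator still runs to $\tau$, not to $\tau\wedge\sigma$), bounds the difference of suprema by the supremum of differences so that only the post-$\sigma$ rewards survive in the numerator, restricts to $\tau\ge\sigma$, and then lower-bounds the denominator by $\mathbb{E}_\bs[\sum_{t=0}^{\sigma-1}\gamma^t]=(1-\mathbb{E}_\bs[\gamma^\sigma])/(1-\gamma)$; this denominator bound is the sole source of the factor $1/(1-\mathbb{E}_\bs[\gamma^\sigma])$. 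A single application of the strong Markov property at $\sigma$ then rewrites the numerator as $\mathbb{E}_\bs\bigl[\gamma^\sigma\,\mathbb{E}_{\bS_\sigma}[\sum_{t=0}^{\tau-\sigma}\gamma^tR(\bS_t)]\bigr]$, and the resulting inner ratio is dominated by $\nu(\bS_\sigma)^+$ (the positive part because $\tau=\sigma$ is feasible). Replacing your continuation-value recursion by this denominator bound closes the argument.
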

	\begin{proof}
		The complete proof can be found in Appendix \ref{proofs} and is outlined here. The first two statements follow
		by showing that 
		%$ f_\bs^{\sigma}$ is continuous, strictly increasing and attains both positive and negative values. The second statement follows  by showing that 
		$\nu_\sigma(\bs)$ equals the Gittins index for the Markov process $\tilde{\bS}$ documenting the full history of $\bS$ up to and including each time $t$ with rewards $\tilde{R}(\tilde{\bS}_t) = R(\bS_t)\mathbb{I}(t< \sigma)$, after which we can apply commonly known results for the Gittins index from \hbox{\citet{lattimore2020bandit}.}
		Only the upper bound $\nu(\bs)-\nu_\sigma(\bs)\leq \mathbb{E}_\bs\left[\gamma^\sigma \nu(\bS_\sigma)^+\right]/(1 -\mathbb{E}_\bs[\gamma^\sigma])$ is non-trivial and mainly follows from bounding the difference between two optimal stopping values by the optimal stopping value of the difference and using the strong Markov property.  
	\end{proof}
	We now give an error bound  for the sampling-based approximation $\nu_M(\bs)$ as defined in \eqref{stoch_approx}.
	Note that $ Z_{t}(\nu)\in[-1/2,1/2]
	$ almost surely. Hence, according to Theorem~\ref{theorem:approxthm} we have 
	\begin{equation*}
		0\leq \inf_{\tau\in\mathcal{T}_N}\mathbb{E}_\bs\left[Z_{\tau}(\nu)\right] - \sum_{k=1}^K\mathbb{E}_\bs\left[\min_{u\in[N]}Z
		^{(k)}_{u}(\nu)\right]\leq \frac{1}{K+1}.\label{ineq:inf_approx}
	\end{equation*} 
	Defining the functions \begin{equation}\tilde{f}_\bs:\nu\mapsto \mathbb{E}_\bs[V_\bs(\nu)],\;\;\;\; f_\bs:\nu \mapsto \inf_{\tau\in\mathcal{T}_N}\mathbb{E}_\bs[Z_\tau(\nu)],\label{def_f}\end{equation}
	we have by \eqref{PACbound_V} and by Jensen's inequality 
	\begin{equation}|\tilde{f}_\bs(\nu)-f_\bs(\nu)|\leq \mathbb{E}_\bs\Bigg| \inf_{\tau\in\mathcal{T}_N}\mathbb{E}_\bs[Z_\tau(\nu)] -V_{\bs}(\nu) \Bigg|\leq \delta + \xi =:B(\delta,\xi).\label{bound_mean_diff}\end{equation}

	Using these results, we derive mean-squared error bounds for our approximation method.
	We do this by defining coupled stochastic approximation iterates based on the function $f_\bs$ that lie almost surely below and above the sequence  $(\nu_m(\bs))_m$ defined in \eqref{stoch_approx}.
	%{\color{red} based on $f_\bs$}.
	As we do not have access to $f_\bs$ or even unbiased estimates of $f_\bs$, these iterates cannot directly be simulated, but  we can bound their distance to $(\nu_m(\bs))_m$, and show finite-time convergence results for these sequences, which can then be used to derive finite-time convergence bounds for $(\nu_m(\bs))_m$.
	To this end, {let} \begin{equation} \epsilon_m= V_{\bs,m}(\nu_m(\bs))-\tilde{f}_\bs(\nu_m(\bs)),\label{MartDiffSeq}\end{equation}
	which is a martingale difference sequence with respect to the natural filtration $(\mathcal{F}^{\epsilon}_m)_m$ w.r.t. $(\epsilon_m)_m$ as $V_{\bs,m}$ (hence $\tilde{f}_\bs$) is bounded and $$\mathbb{E}[\epsilon_m\mid\mathcal{F}_{m-1}^\epsilon] = \mathbb{E}[V_{\bs,m}(\nu_m(\bs))\mid\nu_m(\bs)]-\tilde{f}_\bs(\nu_m(\bs))=0,$$ since $\nu_m(\bs)$ is a function of $(\epsilon_{m^\prime})_{m^\prime = 1}^{m-1}$.

	Let  
	\begin{align}
		\bar{\nu}_0(\bs)={\nu}_0(\bs),\;\;\;\; \bar{\nu}_{m+1}(\bs) &= \bar{\nu}_{m}(\bs)  - \alpha_m\left(f_{\bs}(\bar{\nu}_m(\bs))- B(\delta,\xi) + \epsilon_m \right),\label{defupperSA}\\
		\ubar{\nu}_0(\bs)={\nu}_0(\bs),\;\;\;\; \ubar{\nu}_{m+1}(\bs) &= \ubar{\nu}_{m}(\bs)  - \alpha_m\left(f_{\bs}(\ubar{\nu}_m(\bs)) + B(\delta,\xi) + \epsilon_m \right),\label{deflowerSA}
	\end{align}
	where $(\alpha_m)_m$ is the same sequence as in \eqref{stoch_approx}.
	\newpage
	\begin{lemma}\label{lemma:coupled_boven_onder} Assume the step-size sequence $(\alpha_m)_m$ is such that $\sup_m \alpha_m\leq 2(R_u-R_\ell)$ almost surely. For $m=0,1,2,\ldots$, we have
		\begin{equation}
			\ubar{\nu}_m\leq \nu_m\leq \bar{\nu}_m.\label{eq:bounds}
		\end{equation}
	\end{lemma}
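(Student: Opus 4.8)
The plan is to prove the two-sided bound \eqref{eq:bounds} by induction on $m$, exploiting the fact that the martingale noise $\epsilon_m$ is \emph{shared} across the three recursions \eqref{stoch_approx}, \eqref{defupperSA} and \eqref{deflowerSA}, so that it cancels identically when we subtract them. The base case $m=0$ is immediate, since $\ubar{\nu}_0(\bs)=\nu_0(\bs)=\bar{\nu}_0(\bs)$ by definition.

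Before running the induction I would record two elementary properties of $f_\bs$. Reading off the explicit form \eqref{Z_unn}, for $\nu\geq\nu'$ and any $\tau\in\mathcal{T}_N$ one has $Z_\tau(\nu)-Z_\tau(\nu')=c(\nu-\nu')\frac{1-\gamma^{\tau\wedge\sigma}}{1-\gamma}$, which lies in $[0,\frac{\nu-\nu'}{2(R_u-R_\ell)}]$ because $\tau\wedge\sigma\leq N$ and $c\,\frac{1-\gamma^N}{1-\gamma}=\frac{1}{2(R_u-R_\ell)}$. Taking expectations and then the infimum over $\tau$ (the infimum of a family that is increasing and Lipschitz in $\nu$ is itself increasing and Lipschitz) yields
$$0\leq f_\bs(\nu)-f_\bs(\nu')\leq \frac{\nu-\nu'}{2(R_u-R_\ell)}\qquad(\nu\geq\nu').$$
Hence $f_\bs$ is nondecreasing with Lipschitz constant $L=1/(2(R_u-R_\ell))$, and the hypothesis $\sup_m\alpha_m\leq 2(R_u-R_\ell)$ is exactly what guarantees $\alpha_m L\leq 1$ almost surely.

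For the inductive step, assume \eqref{eq:bounds} at $m$ and set $d_m=\bar{\nu}_m(\bs)-\nu_m(\bs)\geq 0$. Substituting $V_{\bs,m}(\nu_m(\bs))=\tilde{f}_\bs(\nu_m(\bs))+\epsilon_m$ into \eqref{stoch_approx} and subtracting it from \eqref{defupperSA}, the $\epsilon_m$ terms cancel and
$$d_{m+1}=d_m-\alpha_m\bigl(f_\bs(\bar{\nu}_m(\bs))-\tilde{f}_\bs(\nu_m(\bs))-B(\delta,\xi)\bigr).$$
Using $\tilde{f}_\bs(\nu_m(\bs))\geq f_\bs(\nu_m(\bs))-B(\delta,\xi)$ from \eqref{bound_mean_diff}, the bracket is bounded above by $f_\bs(\bar{\nu}_m(\bs))-f_\bs(\nu_m(\bs))\leq L\,d_m$; since $\alpha_m\geq 0$ and $\alpha_m L\leq 1$, we obtain $\alpha_m\,(\text{bracket})\leq \alpha_m L\,d_m\leq d_m$, whence $d_{m+1}\geq 0$. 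The lower bound is symmetric: with $e_m=\nu_m(\bs)-\ubar{\nu}_m(\bs)\geq 0$, subtracting \eqref{deflowerSA} and using $\tilde{f}_\bs(\nu_m(\bs))\leq f_\bs(\nu_m(\bs))+B(\delta,\xi)$ gives a bracket bounded by $f_\bs(\nu_m(\bs))-f_\bs(\ubar{\nu}_m(\bs))\leq L\,e_m$, and the identical step-size argument forces $e_{m+1}\geq 0$.

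The only genuinely delicate point is matching the Lipschitz constant of $f_\bs$ to the step-size bound; everything else is bookkeeping. In particular I would be careful that the slack $B(\delta,\xi)$ enters with the correct sign in \eqref{defupperSA} and \eqref{deflowerSA}, so that the ``$-B$'' in the upper recursion and the ``$+B$'' in the lower recursion \emph{absorb} the $\pm B$ discrepancy between $\tilde{f}_\bs$ and $f_\bs$ rather than compounding it. This is precisely what leaves the brackets controlled by $f_\bs(\bar{\nu}_m(\bs))-f_\bs(\nu_m(\bs))$ and $f_\bs(\nu_m(\bs))-f_\bs(\ubar{\nu}_m(\bs))$, with no residual $B$ term to spoil the contraction $\alpha_m L\leq 1$.
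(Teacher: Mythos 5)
Your proof is correct and follows essentially the same route as the paper's: induction on $m$, cancellation of the shared noise $\epsilon_m$ when subtracting the recursions, absorption of the $\pm B(\delta,\xi)$ slack via \eqref{bound_mean_diff}, and the one-sided Lipschitz bound $0\leq f_\bs(\nu)-f_\bs(\nu')\leq (\nu-\nu')/(2(R_u-R_\ell))$ matched against the step-size condition $\sup_m\alpha_m\leq 2(R_u-R_\ell)$. The only difference is cosmetic: you derive that Lipschitz bound directly from the explicit form of $Z_\tau(\nu)$, whereas the paper invokes its two-sided version \eqref{Ineq: bounds_f} and leaves the verification to the reader.
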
 
	\begin{proof}Observe that $\bar{\nu}_0=\ubar{\nu}_0 = \nu_0$, so that \eqref{eq:bounds} is satisfied for $m=0$. 
		\\Now assume $\ubar{\nu}_m\leq \nu_m\leq \bar{\nu}_m$ for some $m\geq 0$. We have
		\begin{align*}\bar{\nu}_{m+1}(\bs)-{\nu}_{m+1}(\bs) &= \bar{\nu}_{m}(\bs)-{\nu}_{m}(\bs) - \alpha_m\left(f_{\bs}(\bar{\nu}_{m}(\bs)) -(\tilde{f}_{\bs}({\nu}_{m}(\bs))+B(\delta,\xi)) \right)\\&\geq\bar{\nu}_{m}(\bs)-{\nu}_{m}(\bs) - \alpha_m\left(f_{\bs}(\bar{\nu}_{m}(\bs)) -{f}_{\bs}({\nu}_{m}(\bs)) \right)\\&\geq \left(1- \frac{\alpha_m}{2(R_u-R_\ell)}\right)\left(\bar{\nu}_{m}(\bs)-{\nu}_{m}(\bs)\right)\geq 0,
		\end{align*}
		where the second statement follows from \eqref{bound_mean_diff}, and the last statement follows as for any $\nu_1,\nu_2\in\mathbb{R}$
		\begin{equation}
			\frac{\min\left(\frac{1-\gamma}{1-\gamma^N}(\nu_1-\nu_2),\nu_1-\nu_2\right)}{2(R_u-R_\ell)}\leq  f_\bs(\nu_1)-f_\bs(\nu_2) \leq  \frac{\max\left(\frac{1-\gamma}{1-\gamma^N}(\nu_1-\nu_2),\nu_1-\nu_2\right)}{2(R_u-R_\ell)},\label{Ineq: bounds_f}
		\end{equation}
		which can easily be shown from the definition of $f_\bs$, using the fact that the suprema are attained at unique stopping times.

		Similarly, if $\ubar{\nu}_m(\bs)\leq {\nu}_m(\bs)$ for some $m$, then $\ubar{\nu}_{m+1}(\bs)\leq {\nu}_{m+1}(\bs)$. The proof is completed by induction.
	\end{proof}
	
	~
	
	The next theorem gives the mean-squared error between either sequence $\bar{\nu}_m(\bs),\; \ubar{\nu}_m(\bs)$ and a limit point. 
	{ \begin{theorem}[\sc Mean-squared error recursion for coupled sequences]\label{theorem:FTCB}
			\thmFTCB
	\end{theorem}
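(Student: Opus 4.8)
The plan is to track the squared distance to $\omega^*$ by setting $e_m := \omega^* - \omega_m$, so that the recursion rewrites as $e_{m+1} = e_m + \alpha_m\bigl(f_\bs(\omega_m)-f_\bs(\omega^*)\bigr) + \alpha_m\epsilon_m$. First I would expand $e_{m+1}^2$ into three pieces: the squared drift term $\bigl(e_m + \alpha_m(f_\bs(\omega_m)-f_\bs(\omega^*))\bigr)^2$, a cross term linear in $\epsilon_m$, and the pure noise term $\alpha_m^2\epsilon_m^2$. Taking expectations, the cross term vanishes: since $\omega_m$ is a function of $(\epsilon_{m'})_{m'<m}$ and $\alpha_m$ is predictable, the factor multiplying $\epsilon_m$ is $\mathcal{F}^{\epsilon}_{m-1}$-measurable, so conditioning on $\mathcal{F}^{\epsilon}_{m-1}$ and invoking $\mathbb{E}[\epsilon_m\mid\mathcal{F}^\epsilon_{m-1}]=0$ eliminates it. This leaves $\mathbb{E}[e_{m+1}^2]\leq \mathbb{E}\bigl[(e_m + \alpha_m(f_\bs(\omega_m)-f_\bs(\omega^*)))^2\bigr] + \mathbb{E}[\alpha_m^2\epsilon_m^2]$, where the residual term reduces to $\mathbb{E}[\alpha_m^2]\mathbb{E}[\epsilon_m^2]$ once the step-size is decoupled from the current-step noise.

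The heart of the argument, and the step I expect to be the main obstacle, is the almost-sure bound $\bigl(e_m + \alpha_m(f_\bs(\omega_m)-f_\bs(\omega^*))\bigr)^2 \leq (1-c\alpha_m)^2 e_m^2$. On $\{e_m\neq 0\}$ I would write $\kappa_m := (f_\bs(\omega_m)-f_\bs(\omega^*))/e_m$. Because $f_\bs$ is increasing, $f_\bs(\omega_m)-f_\bs(\omega^*)$ and $e_m=\omega^*-\omega_m$ carry opposite signs, so $\kappa_m<0$; and the two-sided estimate \eqref{Ineq: bounds_f} (with $\gamma\in(0,1)$ forcing $\tfrac{1-\gamma}{1-\gamma^N}\leq 1$) pins down its magnitude, giving $\kappa_m\in[-L,-c]$ for $c=\tfrac{1-\gamma}{2(R_u-R_\ell)(1-\gamma^N)}$ and $L=\tfrac{1}{2(R_u-R_\ell)}$. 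Then $e_m + \alpha_m(f_\bs(\omega_m)-f_\bs(\omega^*)) = (1+\alpha_m\kappa_m)e_m$ with the multiplier satisfying $1+\alpha_m\kappa_m\in[1-L\alpha_m,\,1-c\alpha_m]$ (the case $e_m=0$ being trivial since then the drift is zero as well).

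This is where the step-size restriction $\sup_m\alpha_m\leq 2(R_u-R_\ell)=1/L$ becomes essential: it guarantees $1-L\alpha_m\geq 0$, so the entire interval $[1-L\alpha_m,\,1-c\alpha_m]$ lies in $[0,1]$, a range on which $x\mapsto x^2$ is increasing. Since $c\leq L$, the larger endpoint is $1-c\alpha_m$, and hence $(1+\alpha_m\kappa_m)^2\leq(1-c\alpha_m)^2$, yielding the desired pathwise contraction. Substituting this into the expectation above and recombining with the noise term gives exactly the claimed recursion.

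The delicate point to get right is precisely the role of the step-size cap: without $\alpha_m\leq 1/L$ the lower endpoint $1-L\alpha_m$ could turn negative with large absolute value, making $(1-L\alpha_m)^2$ exceed $(1-c\alpha_m)^2$ and breaking the one-sided comparison. Everything else—the expansion, the martingale cancellation of the cross term, and the factorization of the noise contribution—is routine, so the write-up should concentrate on justifying the sign and magnitude control of $\kappa_m$ from \eqref{Ineq: bounds_f} and the endpoint comparison on $[0,1]$.
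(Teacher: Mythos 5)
Your proposal is correct and follows essentially the same route as the paper's proof: expand the squared error, kill the cross term by conditioning on $\mathcal{F}^{\epsilon}_{m-1}$, and use the two-sided bound \eqref{Ineq: bounds_f} together with the step-size cap $\alpha_m\leq 2(R_u-R_\ell)$ to show the drift multiplier lies in $[0,\,1-c\alpha_m]$, hence contracts by $(1-c\alpha_m)^2$. Your ratio $\kappa_m$ is just a repackaging of the paper's indicator function $\eta$, and you correctly identify the nonnegativity of the lower endpoint as the reason the step-size restriction is needed.
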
}
	\begin{proof}
		The full proof can be found Appendix \ref{proofs}. The result is obtained by using conditional independence of the martingale difference sequence $\epsilon_m$, rewriting the difference between $f_\bs(\omega^*)$ and $f_\bs(\omega_m)$ to a scaled difference between $\omega^*$ and $\omega_m$ using \eqref{Ineq: bounds_f} and the law of total expectation.
	\end{proof}
	
	~
	
	Observe that by the assumption on $\alpha_m$ we have $c\alpha_m<1$ (for $N>1$) so that the influence of the initial difference between $\omega_0$ and $\omega^*$ decays exponentially. The squared difference of the iterate $\omega_m$ and $\omega^*$ converges to a value that depends on the second moment of the martingale differences and the step-size sequence $(\alpha_m)_m$.
	
	Different choices of step-size sequences yield different upper bounds on the rates of convergence.  We examine two standard choices.
	\begin{example}[Constant step-size]
		Let $\alpha_m=\alpha$ for all $m$ with $\alpha\leq 2(R_u-R_\ell)$.
		Let $B_0^2 = (\omega_0-\omega^*)^2$, and assume $\max_m\mathbb{E}[\epsilon_m^2]\leq v_\epsilon^2.$
		From the recursion for $\omega_m$, we obtain that
		$$\mathbb{E}[(\omega^*-\omega_{m})^2]\leq B^2_0(1-c\alpha)^{2m} + \frac{v^2_\epsilon \alpha}{c}.$$
		
	\end{example}
	\begin{example}[Linear step-size]
		Let $\alpha_m=A/m$ for all $m$, with $A> 1/c$. Assume $\max_m\mathbb{E}[\epsilon_m^2]\leq v_\epsilon^2.$ Then, by Theorem~\ref{theorem:FTCB} we have for all $m_0\geq A/(2(R_u-R_\ell))$ that
		$$\mathbb{E}[(\omega^*-\omega_{m+1})^2]\leq (1-cA/m)^2\mathbb{E}[(\omega^*-\omega_{m})^2] + A^2v_\epsilon^2/m^2\leq (1-cA/m)\mathbb{E}[(\omega^*-\omega_{m})^2] + A^2v_\epsilon^2/m^2.$$
		Hence, by \citet[Lemma 1]{chung1954stochastic} we have for all $m\geq A/(2(R_u-R_\ell))$ that
		$$\mathbb{E}[(\omega^*-\omega_{m})^2]\leq \frac{A^2v_\epsilon^2}{m(cA-1)} + o(m^{-2} + m^{-cA}).$$
	\end{example}
	In the above two examples, we see that we can choose the step-size sequence and stopping point $m$ such that $\mathbb{E}[(\omega^*-\omega_{m})^2]$ is arbitrarily small. We can construct confidence intervals for $\omega^*$ using Chebyshev's inequality:
	$$\mathbb{P}(|\omega^*-\omega_m|>\xi) \leq \frac{\mathbb{E}[(\omega^*-\omega_{m})^2]}{\xi^2}\leq \delta.$$
	
	Observe that $f_\bs$ is a strictly increasing surjective function on $\mathbb{R}$. {We may therefore }
	\begin{equation} \mbox{choose  $\bar{\nu}(\bs)$, $\ubar{\nu}(\bs)$ such that  }f_{\bs}(\bar{\nu}(\bs))=B(\delta,\xi), \quad f_{\bs}(\ubar{\nu}(\bs))=-B(\delta,\xi).
		\label{eq:surjective}
	\end{equation} 
	We may now construct a finite-time confidence 
	interval for the Gittins index $\nu(\bs)$  for finite~$m$, which can be made arbitrary small  by choosing $\mathbb{E}_\bs[\sigma],m$ large and $\xi,\delta$ small.  
	% Let $\bar{\nu},\ubar{\nu}$ be the limit points for the sequences $(\bar{\nu}_m)_m,(\ubar{\nu}_m)_m$ respectively.
	
	\begin{theorem}[\sc Finite-time confidence interval]\label{FTCI}
		Let $\bar{\nu}(\bs)$, $\ubar{\nu}(\bs)$ be chosen according to~\eqref{eq:surjective}.  Then
		\begin{equation}
			\bar{\nu}(\bs)-\ubar{\nu}(\bs)\leq 2B(\delta,\xi)/c,\label{radius_omega}
		\end{equation}
		which can be made arbitrarily small by suitable choice of $\xi$ and $\delta$.
		
		Let $(\alpha_m)_m$ and $M$ be chosen such that $\sup_m\alpha_m\leq 2(R_u-R_\ell)$, and  \begin{equation}\mathbb{P}(|\bar{\nu}(\bs) - \bar{\nu}_M(\bs)|\leq \xi_2)\geq 1-\delta_2/2,\;\;\;\;  \mathbb{P}(|\ubar{\nu}(\bs) - \ubar{\nu}_M(\bs)|\leq \xi_2)\geq 1-\delta_2/2.\label{confint_nubars}\end{equation}
		Then,  with probability at least $1-\delta_2$
		\begin{equation}\nu(\bs)\in\left(\nu_M(\bs)-\xi_2-2B(\delta,\xi)/c,\;\nu_M(\bs)+\xi_2+2B(\delta,\xi)/c + \frac{\mathbb{E}_\bs[\gamma^\sigma \nu(\bS_\sigma)^+]}{1-\mathbb{E}_\bs[\gamma^\sigma]} \right).\label{result:confint}\end{equation}
	\end{theorem}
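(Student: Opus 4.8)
The plan is to prove the statement in three stages: first the radius bound \eqref{radius_omega}, then a purely deterministic chain of inequalities locating the true index $\nu(\bs)$ relative to $\bar{\nu}(\bs)$ and $\ubar{\nu}(\bs)$, and finally a union bound transferring everything to the observable iterate $\nu_M(\bs)$. For the radius bound I would start from the defining relations \eqref{eq:surjective}, which give $f_{\bs}(\bar{\nu}(\bs))-f_{\bs}(\ubar{\nu}(\bs))=2B(\delta,\xi)$. Since $f_{\bs}$ is strictly increasing and $B(\delta,\xi)>0$, this forces $\bar{\nu}(\bs)>\ubar{\nu}(\bs)$. Applying the lower estimate in \eqref{Ineq: bounds_f} with $\nu_1=\bar{\nu}(\bs)$ and $\nu_2=\ubar{\nu}(\bs)$, and using $\tfrac{1-\gamma}{1-\gamma^N}\leq 1$ so that the minimum there is the $\tfrac{1-\gamma}{1-\gamma^N}$ term, the lower bound equals exactly $c\,(\bar{\nu}(\bs)-\ubar{\nu}(\bs))$ by the definition of $c$. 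Hence $c\,(\bar{\nu}(\bs)-\ubar{\nu}(\bs))\leq 2B(\delta,\xi)$, which is \eqref{radius_omega}.

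Next I would localize the truncated index $\nu_\sigma(\bs)$. By \eqref{final_approx_GI} and \eqref{def_f}, $\nu_\sigma(\bs)$ is the unique root of the strictly increasing surjection $f_{\bs}$, so $f_{\bs}(\nu_\sigma(\bs))=0$; comparing with \eqref{eq:surjective} and invoking strict monotonicity gives $\ubar{\nu}(\bs)<\nu_\sigma(\bs)<\bar{\nu}(\bs)$. Combining this with the one-sided truncation bound of Theorem~\ref{Thm:Error_cutoff}, namely $0\leq\nu(\bs)-\nu_\sigma(\bs)\leq \mathbb{E}_\bs[\gamma^\sigma\nu(\bS_\sigma)^+]/(1-\mathbb{E}_\bs[\gamma^\sigma])$, yields the deterministic enclosure $\ubar{\nu}(\bs)\leq\nu(\bs)\leq\bar{\nu}(\bs)+\mathbb{E}_\bs[\gamma^\sigma\nu(\bS_\sigma)^+]/(1-\mathbb{E}_\bs[\gamma^\sigma])$.

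Finally I would pass to the iterate. A union bound on the two estimates in \eqref{confint_nubars} produces an event of probability at least $1-\delta_2$ on which $|\bar{\nu}(\bs)-\bar{\nu}_M(\bs)|\leq\xi_2$ and $|\ubar{\nu}(\bs)-\ubar{\nu}_M(\bs)|\leq\xi_2$ hold simultaneously. On this event, for the left endpoint I chain $\nu(\bs)\geq\ubar{\nu}(\bs)\geq\bar{\nu}(\bs)-2B(\delta,\xi)/c$ via \eqref{radius_omega}, and $\bar{\nu}(\bs)\geq\bar{\nu}_M(\bs)-\xi_2\geq\nu_M(\bs)-\xi_2$ using the confidence bound together with $\nu_M(\bs)\leq\bar{\nu}_M(\bs)$ from Lemma~\ref{lemma:coupled_boven_onder}, giving the lower endpoint $\nu_M(\bs)-\xi_2-2B(\delta,\xi)/c$. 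Symmetrically, for the upper endpoint $\bar{\nu}(\bs)\leq\ubar{\nu}(\bs)+2B(\delta,\xi)/c\leq\ubar{\nu}_M(\bs)+\xi_2+2B(\delta,\xi)/c\leq\nu_M(\bs)+\xi_2+2B(\delta,\xi)/c$, again using the confidence bound and $\ubar{\nu}_M(\bs)\leq\nu_M(\bs)$; adding the truncation term from Theorem~\ref{Thm:Error_cutoff} reaches the right endpoint. Since this holds on the $1-\delta_2$ event, \eqref{result:confint} follows.

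The argument is essentially bookkeeping, and no individual step is hard; the main obstacle is orienting every inequality consistently so that Lemma~\ref{lemma:coupled_boven_onder}, the two confidence bounds, the radius bound, and the one-sided truncation bound compose in the correct direction for each endpoint. The only genuinely nontrivial ingredient is the identification of the lower estimate in \eqref{Ineq: bounds_f} with $c\,(\bar{\nu}(\bs)-\ubar{\nu}(\bs))$, which hinges on $\tfrac{1-\gamma}{1-\gamma^N}\leq 1$; everything else reduces to the union bound and transitivity.
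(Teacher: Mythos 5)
Your proposal is correct and follows essentially the same route as the paper's proof: the radius bound comes from the lower estimate in \eqref{Ineq: bounds_f} applied to the defining relations \eqref{eq:surjective}, the truncated index $\nu_\sigma(\bs)$ is sandwiched between $\ubar{\nu}(\bs)$ and $\bar{\nu}(\bs)$ by monotonicity, a union bound over the two events in \eqref{confint_nubars} transfers this to $\nu_M(\bs)$ via Lemma~\ref{lemma:coupled_boven_onder}, and Theorem~\ref{Thm:Error_cutoff} supplies the final truncation term. Your endpoint chains are in fact spelled out more explicitly than in the paper, which leaves the use of $\ubar{\nu}_M(\bs)\leq\nu_M(\bs)\leq\bar{\nu}_M(\bs)$ implicit.
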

	\begin{proof}
		% Note that by Theorem~\ref{theorem:FTCB}, the mean-squared differences between sequences $\bar{\nu}_m(\bs)$, $\ubar{\nu}_m(\bs)$ and the points $\bar{\nu}(\bs)$, $\ubar{\nu}(\bs)$ converge to a value depending on the second moment of the residuals $(\epsilon_m)_m$ and the step-size sequence $(\alpha_m)_m$.
		From \eqref{Ineq: bounds_f} we obtain that 
		\begin{align*}
			2B(\delta,\xi) = f_\bs(\bar{\nu}(\bs)) - f_\bs(\ubar{\nu}(\bs))&\geq c(\bar{\nu}(\bs)-\ubar{\nu}(\bs)),
			%\frac{(1-\gamma)(\bar{\nu}_m(\bs)-\ubar{\nu}_m(\bs))}{2(R_u-R_\ell)(1-\gamma^N)}
			% \frac{4(R_u-R_\ell)(1-\gamma^N)B(\delta,\xi)}{(1-\gamma)},\label{radius_omega}
		\end{align*}
		which implies \eqref{radius_omega}.
		
		Observe that by \eqref{eq:surjective}, $\nu_\sigma(\bs) \in[\ubar{\nu}(\bs), \;\bar{\nu}(\bs)]$ as $f_\bs$ is increasing and $f_\bs(\nu_\sigma(\bs))=0$.  Hence, by a union bound for the complements of the events in \eqref{confint_nubars}, and inequality \eqref{radius_omega}, we have with probability larger than $1-\delta_2$ that the following confidence interval holds for the second-stage approximation
		\begin{equation*}\nu_\sigma(\bs)\in(\nu_m(\bs)-\xi_2-2B(\delta,\xi)/c,\;\nu_m(\bs)+\xi_2+2B(\delta,\xi)/c).\label{CI_nu_N}\end{equation*}
		The result follows 
		%Looking at the first-stage approximation and 
		from Theorem~\ref{Thm:Error_cutoff}.   \end{proof}

	The confidence interval \eqref{result:confint} can be used to construct an $\epsilon$-optimal policy for any FABP. For this we first need to show that the values of $(\nu_m)_m$ are restricted to a closed bounded interval. The proof of this lemma can be found in Appendix~\ref{proofs}.
	
	\begin{lemma}\label{lemma_nubound}
		\Lemmanubound
	\end{lemma}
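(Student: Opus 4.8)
The plan is to prove the two–sided bound by induction on $m$, the base case $\nu_0(\bs)\in[R_\ell,R_u]$ being immediate. The one ingredient I would establish first is a \emph{pathwise} sign property of the sampled increment: $V_\bs(\nu)\geq 0$ whenever $\nu\geq R_u$ and $V_\bs(\nu)\leq 0$ whenever $\nu\leq R_\ell$, holding surely for every realisation of the simulation (not merely in expectation, which is what the lemma requires). Granting this, the inductive step is routine. Assume $\nu_m(\bs)\leq R_u+M_\alpha/2$. If $\nu_m(\bs)\leq R_u$ then, since $V_{\bs,m}\in[-1/2,1/2]$ and $\alpha_m\leq M_\alpha$,
\[\nu_{m+1}(\bs)=\nu_m(\bs)-\alpha_m V_{\bs,m}(\nu_m(\bs))\leq \nu_m(\bs)+\tfrac{1}{2}M_\alpha\leq R_u+M_\alpha/2,\]
while if $R_u<\nu_m(\bs)\leq R_u+M_\alpha/2$ the sign property gives $V_{\bs,m}(\nu_m(\bs))\geq 0$, so $\nu_{m+1}(\bs)\leq\nu_m(\bs)\leq R_u+M_\alpha/2$. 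The lower bound $\nu_{m+1}(\bs)\geq R_\ell-M_\alpha/2$ follows symmetrically, using $V_{\bs,m}\leq 1/2$ and the sign property $V_\bs(\nu)\leq 0$ for $\nu\leq R_\ell$.

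The heart of the argument is therefore the sign property, which I would derive from monotonicity of the stopped process $Z_t(\nu)$ in $t$. For $\nu\geq R_u$ every increment $c\gamma^{v}(\nu-R(\bS_v))$ with $v<\sigma$ is non-negative (since $R(\bS_v)\in[R_\ell,R_u]$ before $\sigma$), so $Z^{(1)}_{\bi,t,n}(\nu)$ is non-decreasing in $t$ and hence $\min_{u\in[N]}Z^{(1)}_{\bi,u,n}(\nu)=Z^{(1)}_{\bi,1,n}(\nu)=c(\nu-R(\bs))$, a constant that is the \emph{same for every sample path} because $\bS^{\bi}_0=\bs$. For $\nu\leq R_\ell$ the increments are non-positive, $Z^{(1)}_{\bi,t,n}(\nu)$ is non-increasing, and the minimum is attained at the terminal time $u=N$. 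In both regimes I would then show by induction on $k$ that the higher–order corrections vanish pathwise, i.e. $\min_{u\in[N]}Z^{(k)}_{\bi,u,n}(\nu)=0$ for all $k\geq 2$ and every index $\bi$. For $\nu\geq R_u$ the correction subtracted in \eqref{Zsamp2} is the constant $c(\nu-R(\bs))$ irrespective of the conditioning, so $Z^{(2)}_{\bi,t,n}=Z^{(1)}_{[\bi,1],t,n}-c(\nu-R(\bs))\geq 0$ with value $0$ at $u=1$; for $\nu\leq R_\ell$, at the terminal time the conditioning event $\{\bS^{[\bi,j]}_{\ixv{N}}=\bS^{\bi}_{\ixv{N}}\}$ fixes the whole trajectory, collapsing the empirical average to $\min_u Z^{(1)}_{[\bi,1],u,n}=Z^{(1)}_{[\bi,1],N,n}$ and forcing $Z^{(2)}_{\bi,N,n}=0$. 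Combined with the general fact $Z^{(2)}_{\bi,t,n}\geq 0$ (obtained by taking $u=t$ in the minimum and using that $Z^{(1)}_{[\bi,j],t,n}$ depends only on the path up to $t$), this gives $\min_u Z^{(2)}_{\bi,u,n}=0$ for every $\bi$, and the vanishing then propagates to all $k\geq 2$ since a correction built from an identically–zero minimum is again zero. Consequently $\sum_{k=1}^K\min_u Z^{(k)}_{[k,j],u,n}(\nu)=\min_u Z^{(1)}_{[1,j],u,n}(\nu)$ for every $j$, which equals $c(\nu-R(\bs))\geq 0$ for $\nu\geq R_u$ and $Z^{(1)}_{[1,j],N,n}(\nu)\leq 0$ for $\nu\leq R_\ell$; projection onto $[-1/2,1/2]$ preserves the sign, giving the claim.

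The step I expect to be the main obstacle is precisely the pathwise collapse of the sampled correction terms for $\nu\leq R_\ell$: unlike the $\nu\geq R_u$ case, the minimising stopping time sits at the horizon $N$ and the relevant minimum is genuinely path–dependent, so the empirical averages in \eqref{Zsamp2} carry real sampling noise at intermediate times. What rescues the argument is that this noise disappears \emph{exactly} at $t=N$, where conditioning on $\{\bS^{[\bi,j]}_{\ixv{N}}=\bS^{\bi}_{\ixv{N}}\}$ determines the entire path; care is needed to justify this collapse through the regular conditional probabilities guaranteed in Section~\ref{sec:3.1}, and to dispose of the degenerate case $\sigma=0$ (when $R(\bs)\notin[R_\ell,R_u]$), in which $Z\equiv 0$, $V_\bs\equiv 0$, and the iterates never leave $[R_\ell,R_u]$, so the bound holds trivially.
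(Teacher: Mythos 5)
Your proposal is correct and follows essentially the same route as the paper's proof: establish the pathwise sign of $V_\bs(\nu)$ outside $[R_\ell,R_u]$ by showing that the minimum of $Z^{(1)}$ sits at $u=1$ (resp.\ $u=N$), that all higher-order corrections $\min_u Z^{(k)}_{\bi,u,n}$ vanish identically for $k\geq 2$, and that the projection onto $[-1/2,1/2]$ preserves the sign, then conclude by the recursion \eqref{stoch_approx}. The only difference is that you spell out the $\nu\leq R_\ell$ case (where the minimizer is at the horizon and the collapse of the sampled corrections happens only at $t=N$) and the degenerate case $\sigma=0$, both of which the paper dispatches implicitly; these details are handled correctly.
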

	For the family of alternative bandit processes, we can now define an $\epsilon$-optimal policy, which we will denote the SBGIA policy (SBGIAP).
	\begin{theorem}[\sc $\epsilon$-optimal policy for FABP]~\label{Thm:epsilonoptimal}
		Let $\sup_{m}\alpha_m\leq 2(R_u-R_\ell)<\infty$ almost surely. Let $\pi$ be the randomized Markov policy such that for all histories $h\in\mathcal{H}$
		$$\pi(h) = \underset{a\in[A]}{\arg\max}\;\nu_{M}(\bs^a_h),$$
		where $(\nu_m)_m$ is determined by \eqref{truncGI} -- \eqref{stoch_approx}, and $M,\sigma,\delta,\xi$ are chosen such that 
		%by \eqref{result:confint} 
		for some $\epsilon>0$ 
		% we have for all $a\in[A]$ that
		%with probability 
		% $$1-(1-\gamma)^2\epsilon/(4A(R_u-R_\ell + M_\alpha/2))$$
		% that 
		\begin{equation}
			\mathbb{P}\left(|\nu(\bs_h^a) - \nu_M(\bs_h^a)|\leq (1-\gamma)^2\epsilon/4\right) ~
			\geq ~ 1-(1-\gamma)^2\epsilon/(4AD(\bs^a_h))\;\;\;\;\forall a\in [A] ,\label{bound_GI_simultaneous}    \end{equation}
		for $D(\bs^a_h)=(1-\gamma)C(\bs_h^a) + \max(|R_\ell|,|R_u|)+M_{\alpha}/2$.Then, the policy $\pi$ is $\epsilon$-optimal for the family of alternative bandit processes, i.e.,
		$$\mathbb{E}_{\pi^*} \left[ \sum_{t=0}^{\infty}\gamma^tR\left(\bS_{N_{A_t,t}}^{A_t}\right)\right] - \mathbb{E}_{\pi} \left[ \sum_{t=0}^{\infty}\gamma^tR\left(\bS_{N_{A_t,t}}^{A_t}\right)\right]\leq \epsilon.$$
	\end{theorem}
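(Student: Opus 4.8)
Throughout write $V^{\pi^*}$ and $V^{\pi}$ for the two total discounted rewards compared in the statement (the objective \eqref{totdiscounted} under $\pi^*$ and under $\pi$), let $V^{*}=V^{\pi^*}$ and $Q^{*}$ denote the optimal value and action‑value functions of the family of alternative bandit processes, and let $\bX_t=(\bS^1,\dots,\bS^A)$ and $A_t$ be the joint state and the (randomized) arm pulled at time $t$ under $\pi$. The plan is to peel the value gap into a discounted stream of one‑step suboptimalities and then charge each of these to the per‑state index errors. Since $\pi^*$, the Gittins‑index policy of \eqref{defGI}, is optimal, the performance‑difference identity for discounted Markov decision processes gives
\begin{equation*}
V^{\pi^*}-V^{\pi}=\sum_{t=0}^{\infty}\gamma^{t}\,\mathbb{E}_{\pi}\big[V^{*}(\bX_{t})-Q^{*}(\bX_{t},A_{t})\big],
\end{equation*}
with every summand nonnegative because $Q^{*}\le V^{*}$. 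It therefore suffices to bound the one‑step advantage loss at each visited state and to sum the geometric weights.

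The first ingredient is a bound on the expected index error at a fixed state. Fix a history $h$, abbreviate $X_a=|\nu(\bs^a_h)-\nu_M(\bs^a_h)|$, and set $\eta=(1-\gamma)^2\epsilon/4$. By Lemma~\ref{lemma_nubound} we have $\nu_M(\bs^a_h)\in[R_\ell-M_\alpha/2,\,R_u+M_\alpha/2]$ almost surely, and $|\nu(\bs^a_h)|$ is controlled through Assumption~\ref{boundedtotrew}, so that deterministically $X_a\le D(\bs^a_h)$. Using the elementary inequality $\max_a X_a\le \eta+\sum_a (X_a-\eta)^+$ together with \eqref{bound_GI_simultaneous},
\begin{equation*}
\mathbb{E}\big[\max_{a\in[A]}X_a\big]\le \eta+\sum_{a=1}^{A}D(\bs^a_h)\,\mathbb{P}\big(X_a>\eta\big)\le \eta+\sum_{a=1}^{A}D(\bs^a_h)\,\frac{(1-\gamma)^2\epsilon}{4A\,D(\bs^a_h)}=\frac{(1-\gamma)^2\epsilon}{2}.
\end{equation*}
The crucial point is that the permitted failure probability scales like $1/D(\bs^a_h)$, so both the magnitude factor $D(\bs^a_h)$ and the number of arms $A$ cancel, leaving a bound that is uniform over states.

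The main obstacle is converting an index gap into a one‑step advantage loss, i.e.\ proving that with $\hat a=\arg\max_a\nu_M(\bs^a)$,
\begin{equation*}
V^{*}(\bs)-Q^{*}(\bs,\hat a)\le \frac{2}{1-\gamma}\,\max_{a\in[A]}|\nu(\bs^a)-\nu_M(\bs^a)|.
\end{equation*}
I would argue as follows. Because $\pi$ selects $\hat a$, its estimated index dominates that of the true leader $a^{*}=\arg\max_a\nu(\bs^a)$; two applications of the error bound then give $\nu(\bs^{\hat a})\ge \nu(\bs^{a^{*}})-2\max_a|\nu(\bs^a)-\nu_M(\bs^a)|$, so $\pi$ never pulls an arm whose true Gittins index lies more than $\Delta:=2\max_aX_a$ below the maximal one. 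It remains to show that pulling an arm within $\Delta$ of the top index costs at most $\Delta/(1-\gamma)$ in value. This is where the Gittins theory is essential: reading the index through its prevailing/fair‑charge characterization \eqref{defGI2} as a per‑period rate and invoking the optimality of the index policy (the exchange argument behind Theorem~35.9 in \citet{lattimore2020bandit}), a per‑period shortfall of $\Delta$ accrues over an effective discounted horizon of $1/(1-\gamma)$ periods, producing the factor $1/(1-\gamma)$. I expect this per‑step inequality, rather than the probabilistic bookkeeping above, to be the genuine difficulty.

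Combining the pieces closes the argument. The index estimates at $\bX_t$ are drawn afresh and, conditionally on the history, independently of the past, so the bound of the second paragraph applies at every visited state; substituting it and the advantage bound into the performance‑difference decomposition yields
\begin{equation*}
V^{\pi^*}-V^{\pi}\le \sum_{t=0}^{\infty}\gamma^{t}\cdot\frac{2}{1-\gamma}\cdot\frac{(1-\gamma)^2\epsilon}{2}=\frac{1}{1-\gamma}\cdot(1-\gamma)\epsilon=\epsilon,
\end{equation*}
where the geometric sum supplies the final factor $1/(1-\gamma)$, giving the claimed $\epsilon$‑optimality.
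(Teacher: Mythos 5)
Your probabilistic bookkeeping is sound and essentially matches the paper's: you bound $\mathbb{E}[\max_a X_a]$ by $\eta+\sum_a D(\bs^a_h)\,\mathbb{P}(X_a>\eta)\le(1-\gamma)^2\epsilon/2$ using the deterministic bound $X_a\le D(\bs^a_h)$ (which the paper gets from Lemma~\ref{lemma_nubound} together with $\nu(\bs^a)\le(1-\gamma)C(\bs^a)$), and your observation that $\max_a\nu(\bs^a)-\nu(\bs^{\hat a})\le 2\max_a X_a$ because $\hat a$ maximizes the \emph{estimated} index is exactly the paper's add-and-subtract step. The final arithmetic also checks out.

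The gap is the step you yourself flag as the genuine difficulty: the per-state advantage bound $V^{*}(\bx)-Q^{*}(\bx,\hat a)\le \frac{2}{1-\gamma}\max_a|\nu(\bs^a)-\nu_M(\bs^a)|$. You assert it by appealing to the exchange argument behind the optimality of the index policy and an informal ``per-period shortfall of $\Delta$ accrues over an effective horizon of $1/(1-\gamma)$,'' but you do not prove it, and it is not an off-the-shelf consequence of Gittins optimality: the prevailing-charge/interchange argument bounds the \emph{total} value gap of a stationary policy by the discounted sum of its index gaps, not the one-step advantage $V^{*}-Q^{*}$ at a single joint state (the comparison policy ``pull $\hat a$ once, then act optimally'' is not stationary, so the global bound does not specialize to it, and it is not obvious that the per-step inequality with the $1/(1-\gamma)$ factor holds in the form you need). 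The paper sidesteps this entirely by citing \citet{glazebrook1982evaluation} for inequality \eqref{glazebrookbound}, which is precisely the aggregate form of the index-gap-to-value-gap conversion, and then substitutes the expected index-gap bound into it; no performance-difference decomposition or per-step advantage estimate is required. To repair your route you would either have to prove the per-step advantage inequality from scratch (nontrivial, and the central analytic content of the theorem) or replace your opening display by \eqref{glazebrookbound}, at which point your argument collapses into the paper's.
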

	\begin{proof}Note that \eqref{bound_GI_simultaneous} is possible due to \eqref{result:confint}.
		in~\citet{glazebrook1982evaluation}, it is shown that for any stationary policy~$\pi$ 
		\begin{equation}\mathbb{E}_{\pi^*} \left[ \sum_{t=0}^{\infty}\gamma^tR\left(\bS_{N_{A_t,t}}^{A_t}\right)\right] - \mathbb{E}_{\pi} \left[ \sum_{t=0}^{\infty}\gamma^tR\left(\bS_{N_{A_t,t}}^{A_t}\right)\right]\leq \mathbb{E}_{\pi}\left[\sum_{t=0}^\infty\gamma^t\left(\max_{a\in[A]}\nu(\bS_{N_{a,t}}^a) - \nu\left(\bS_{N_{A_t,t}}^{A_t}\right)\right)\right]/(1-\gamma).\label{glazebrookbound}\end{equation}
		By our choice of $m,\sigma,\delta,\xi$ we have by a union bound  over $a$ for the {events} in \eqref{bound_GI_simultaneous} that for any time point $t$
		$$\mathbb{P}\left(\max_{a}|\nu(\bS_t^a) - \nu_m(\bS_t^a)|> (1-\gamma)^2\epsilon/4\right) \leq (1-\gamma)^2\epsilon/(4D(\bS^a_t)).$$
		Combining Lemma~\ref{lemma_nubound} with the fact that $\nu(\bS_t^a)\leq (1-\gamma)C(\bS_t^a)$ yields $|\nu(\bS_t^a) - \nu_m(\bS_t^a)|\leq D(\bS^a_t)$ for all $m$,  hence
		$$\mathbb{E}\left[\max_{a}|\nu(\bS_t^a) - \nu_m(\bS_t^a)|\right]\leq (1-\gamma)^2\epsilon/4 +(1-\gamma)^2\epsilon/4 = (1-\gamma)^2\epsilon/2. $$
		%  i.e. the regret of a suboptimal policy is bounded by the discounted sum of  differences in Gittins indices of the optimal and chosen arms. 
		%  This bound can be used to evaluate the regret of the policy $\pi$ that chooses the largest value for the Gittins index approximation \eqref{truncGI}-\eqref{stoch_approx}, i.e. 
		%  Assume that (e.g., similar as in inequality  \eqref{bound_mean_diff} and a union bound)  $1-\delta_2$ and the diameter of the confidence interval in \eqref{result:confint} are chosen such that $\mathbb{E}[\max_a(\nu(\bs_t^a) - \nu_m(\bs_t^a))]\leq (1-\gamma)^2\epsilon/2$ for some $\epsilon>0$. 
		Using this in the right-hand side of \eqref{glazebrookbound} scaled by $(1-\gamma)$ gives
		\begin{align*} 
			\mathbb{E}_{\pi}\left[\sum_{t=0}^\infty\gamma^t\left(\max_{a\in[A]}\nu(\bS_t^a) - \nu\left(\bS_t^{A_t}\right)\right)\right] &=\mathbb{E}_{\pi}\left[\sum_{t=0}^\infty\gamma^t\left(\max_{a\in[A]}\nu(\bS_t^a)-\nu_{m}(\bS^{A_t}_t)+\nu_{m}(\bS^{A_t}_t) - \nu\left(\bS_t^{A_t}\right)\right)\right]\\[1mm]
			&\leq \mathbb{E}_{\pi}\left[\sum_{t=0}^\infty\gamma^t\left(\max_{a\in[A]}\left(\nu(\bS_t^a)-\nu_{m}(\bS^{a}_t)\right)+\nu_{m}(\bS^{A_t}_t) - \nu\left(\bS_t^{A_t}\right)\right)\right]\\
			&\leq \sum_{t=0}^\infty\gamma^t\mathbb{E}_{\pi}\left[2\max_{a\in[A]}|\nu(\bS_t^a)-\nu_{m}(\bS^{a}_t)|\right]\leq(1-\gamma)\epsilon.\end{align*}
	\end{proof}
	\subsection{Asymptotic convergence results\label{sec:3.4}}\phantom{.} This section  investigates convergence properties of the stochastic process defined in \eqref{stoch_approx}. In the previous section, we have shown that
	after a finite, known, amount of iterations, 
	the iterates $\nu_m(\bs)$ lie in an interval containing $\nu(\bs)$ with high probability. The length of this interval depends on the choice of $K$ and $n$. In this section, we let $K$ and $n$ be constants, not depending on $\xi,\delta$, and investigate convergence and asymptotic normality of the iterates $\nu_m(\bs)$ when $m$ goes to infinity, under different choices of the step-size sequence.  First, we show that when the step-size sequence almost surely satisfies the \mbox{Robbins-Monro conditions (\citet{robbins1951stochastic})}, the stochastic approximation iterates converge almost surely (Theorem~\ref{convgRM}). Then, we show that if we instead take a constant step-size sequence, the iterates converge in mean-square to the set of roots (Theorem~\ref{Thm:constantSZ}). Lastly, under stronger conditions, we show that we can construct an adaptive stochastic approximation procedure (\citet{lai1979adaptive}), where a central limit theorem holds for the stochastic approximation iterates (Theorem~\ref{thm:convg_dist}).
	In the following, we let $V_{\bs}(\nu) = V^{(K,N)}_{\bs,n}(\nu)$ for fixed choices of $K,N,n$, and define $\tilde{f}_\bs$ and $\epsilon_m$ as in \eqref{def_f}, \eqref{MartDiffSeq}, respectively.

	The sequence \eqref{stoch_approx} relates to an Euler scheme for the first-order scalar autonomous ordinary~differential~equation~(ODE)~(\citet{borkar2009stochastic}),
	\begin{equation}
		\frac{d}{dx}\nu(x) = -\tilde{f}_{\bs}(\nu(x)) .\label{ODE}
	\end{equation}
	This differential equation has as equilibrium points the roots $\mathcal{R}_{\bs}$ of the function $\tilde{f}_{\bs}$, provided that a root exists.
	{We show that  iteration scheme \eqref{stoch_approx} satisfies the conditions stated in  \hbox{\citet[Chapter~2]{borkar2009stochastic},} hence almost surely the limiting behavior of the sample paths of the stochastic process \eqref{stoch_approx} equals that of the solution to the above ODE.
		We then show that the ODE \eqref{ODE} always converges to an equilibrium point, irrespective of the starting point. It follows that the sample paths of \eqref{stoch_approx} almost surely converge to a random variable $\tilde{\nu}(\bs)\in\mathcal{R}_\bs$ such that $\tilde{f}_{\bs}(\tilde{\nu}(\bs))=0$.}
	
	To make this formal, we define an {\it internally chain transitive invariant set} corresponding to an ODE in accordance with the definition given in~\citet{borkar2009stochastic}. 
	
	\begin{definition}[Internally chain transitive invariant set (\citet{borkar2009stochastic})]
		A closed set $E\subset \mathbb{R}$ is said to be an internally chain transitive invariant set  for the ODE \eqref{ODE}  if
		\begin{itemize} 
			\item any trajectory
			$\nu$ of \eqref{ODE} with $\nu(0) \in E$
			satisfies $\nu(x) \in E$ $\forall x \in \mathbb{R}$, 
			\item  for any $\nu, \nu^\prime  \in E$ and any $\epsilon> 0,\, T > 0,$ there exist $n \geq 1$
			and points $\nu_0 = 
			\nu,\nu_1\dots ,\nu_{n-1}, \nu_n = \nu'$ in $E$ such that the trajectory of \eqref{ODE}
			initiated at $\nu_i$ meets with the $\epsilon$-neighbourhood of $\nu_{i+1}$ for $0 \leq i < n$ after a
			time $\geq T$.
		\end{itemize}
		%
		%
		%  A closed set $E\subset \mathbb{R}$ is a internally chain transitive invariant set set of an ODE $\dot{y}=\tilde{f}_\bs$ if any solution $y(x),$ $x\in\mathbb{R}$ with $y(0)\in E$ satisfies $y(x)\in E$ for all $x\in\mathbb{R}$ and for any $y_{f},\;y_e\in E$ and any $\epsilon,\;T>0$ there exists $n$ and a sequence $(y_i)_{i=0}^n$ such that $y_0=y_f,\;y_n = y_e$ and the solution of the ODE initiated at $y_f$ meets with the $\epsilon$-neighbourhood of $y_{i+1}$ after a point $x_i\geq T$.  
	\end{definition}
	
	~
	
	We first show that $\tilde{f}_\bs$ is Lipschitz continuous.
	\begin{lemma}\label{lemma:properties}
		The function $\tilde{f}_{\bs}$ is Lipschitz continuous in $\nu$. 
	\end{lemma}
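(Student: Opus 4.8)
The plan is to establish Lipschitz continuity of the random function $V_\bs$ in $\nu$ with a \emph{deterministic} (sample-independent) constant $L$, and then pass to the expectation: for any such $V_\bs$ one has $|\tilde{f}_\bs(\nu_1)-\tilde{f}_\bs(\nu_2)| = |\mathbb{E}_\bs[V_\bs(\nu_1)-V_\bs(\nu_2)]|\le \mathbb{E}_\bs[L|\nu_1-\nu_2|]=L|\nu_1-\nu_2|$. Thus the whole argument reduces to propagating a Lipschitz constant through the algebraic operations that define $V_\bs$ in \eqref{Zsamp1}, \eqref{Zsamp2}, and \eqref{eqn:Vsamp}.

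First I would note that the base layer is affine in $\nu$. From \eqref{Z_unn} together with the stopping convention $Z_t(\nu)=\tilde{Z}_{t\wedge\sigma}(\nu)$, each $Z^{(1)}_{\bi,t,n}(\nu)$ is affine with slope $c(1-\gamma^{t\wedge\sigma})/(1-\gamma)$, whose absolute value is bounded by $1/(2(R_u-R_\ell))=:L_1$ uniformly over $t\in[N]$, over all sample paths, and over every index $\bi$, because $t\wedge\sigma\le N$ forces $c(1-\gamma^{t\wedge\sigma})/(1-\gamma)\le c(1-\gamma^N)/(1-\gamma)=1/(2(R_u-R_\ell))$. Hence every $Z^{(1)}_{\bi,t,n}(\cdot)$ is $L_1$-Lipschitz with a constant independent of the realized randomness.

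Next I would induct on the level $k$ via the recursion \eqref{Zsamp2}. Suppose every $Z^{(k)}_{\bj,u,n}(\cdot)$ is $L_k$-Lipschitz uniformly in the randomness. Then the pointwise minimum $\min_{u\in[N]}Z^{(k)}_{[\bi,j],u,n}(\cdot)$ is $L_k$-Lipschitz (a minimum of finitely many $L_k$-Lipschitz functions), and conditioning on $\{\bS^{[\bi,j]}_{\ixt}=\bS^{\bi}_{\ixt}\}$ leaves this unchanged as a function of $\nu$, since $\nu$ enters only through the affine base layer while the conditioning concerns path realizations, not $\nu$. Averaging over $j\in\{2,\dots,n(\bi,k)+1\}$ preserves the $L_k$-Lipschitz bound, and subtracting this average from the $L_k$-Lipschitz term $Z^{(k)}_{[\bi,1],t,n}(\cdot)$ gives $L_{k+1}\le 2L_k$, so that $L_k\le 2^{k-1}L_1<\infty$. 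Finally, in \eqref{eqn:Vsamp} the inner sum $\sum_{k=1}^K\min_{u\in[N]}Z^{(k)}_{[k,j],u,n}(\cdot)$ is $(\sum_{k=1}^K L_k)$-Lipschitz, the outer average over $j$ preserves this, and the projection $\max(a,\min(b,\cdot))$ is $1$-Lipschitz; hence $V_\bs$ is $L$-Lipschitz with $L=\sum_{k=1}^K 2^{k-1}L_1$, a deterministic constant, and passing to the expectation as above completes the argument.

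The main obstacle is verifying that the Lipschitz constant can be taken deterministic (independent of the sampled paths), which is exactly what legitimises moving the bound through $\mathbb{E}_\bs[\cdot]$. The delicate point is the conditioning step in \eqref{Zsamp2}: one must check that it introduces no $\nu$-dependent randomness, so that the minimum, averaging, and subtraction all act as $\nu$-wise Lipschitz operations with the stated constants and the doubling estimate $L_{k+1}\le 2L_k$ holds uniformly, yielding a finite final constant.
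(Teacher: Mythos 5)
Your proposal is correct and follows essentially the same route as the paper's proof: an induction on $k$ showing each $Z^{(k)}_{\bi,u,n}$ is Lipschitz in $\nu$ (affine base layer, then minimum/average/sum/projection preserve Lipschitz continuity), followed by passing the bound through the expectation in \eqref{eqn:Vsamp}. Your explicit tracking of a deterministic, sample-path-independent constant $L_k\le 2^{k-1}L_1$ is a welcome refinement of a point the paper leaves implicit when it invokes Lipschitz continuity of the expectation.
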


	\begin{proof}
		We first prove by induction in $k$ that $Z_{\bi,u,n}^{(k)}$ is Lipschitz continuous  for all $k,\bi,u,n$, starting with $k=1$.
		From 
		\eqref{Zsamp1} and \eqref{Z_unn} 
		it follows that $Z_{\bi, u,n}^{(1)}$ 
		% \begin{align}Z_{\bi, t,n}^{(1)}(\nu) = 
			% g_t^{\nu}(\bS_{\ixt}^{\bi})=\frac{1-\gamma}{2(R_u-R_\ell)(1-\gamma^N)} \sum_{u=0}^{t-1}\gamma^u (\nu-R(\bS_u^{\bi})),\label{eq:f1} \end{align}
		% which 
		is Lipschitz continuous with constant \mbox{$L=1/(2(R_u-R_\ell))$} for all $\bi,u,n$. 
		Now assume that $Z_{\bi,u,n}^{(k)}$ is Lipschitz continuous up to some $k$ for all $\bi,u,n$.
		As the minimum, average, and sum of a finite set of Lipschitz continuous functions are Lipschitz continuous, we have by \eqref{Zsamp2} that $Z_{\bi, u,n}^{(k+1)}$ is also Lipschitz continuous for all possible $\bi,u,n$.
		By induction we have that $Z_{\bi, u,n}^{(k)}$ is Lipschitz continuous for all $k,\bi,u,n$. Now, by \eqref{eqn:Vsamp}
		$$\tilde{f}_s(\nu) = \mathbb{E}_\bs\left[\max\left(-1/2,\;\min\left(1/2,\;\frac{1}{n(K,K)}\sum_{j=1}^{n(K,K)}\sum_{k=1}^K \min_{u\in [N]}Z_{[k,j],u, n}^{(k)}(\nu)\right)\right)\right]. $$
		Hence, as the expectation, maximum, minimum, and sum of Lipschitz continuous functions are Lipschitz continuous, we have that $\tilde{f}_\bs$ is Lipschitz continuous.  \end{proof}
	Using this lemma, the next result follows.
	\begin{theorem}[\sc Almost sure convergence of stochastic approximation iterates]\label{convgRM}
		\phantom{.}\\ Assume $(\alpha_m)_m$ almost surely satisfies the Robbins-Monro conditions (\citet{borkar2009stochastic}):
		\begin{equation}\sum_{m=1}^\infty \alpha_m = \infty,\;\;\;\; \sum_{m=1}^\infty \alpha_m^2 <\infty.\label{RMcond}\end{equation}
		The sequence $(\nu_{m}(\bs))_m$ generated by  \eqref{stoch_approx}  almost surely converges to a (possibly sample path dependent) compact connected internally chain transitive invariant set of \eqref{ODE}.
	\end{theorem}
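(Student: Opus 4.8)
The plan is to recognise iteration \eqref{stoch_approx} as a standard stochastic approximation recursion and to invoke the ODE method of \citet[Chapter~2]{borkar2009stochastic}. Writing $V_{\bs,m}(\nu_m(\bs)) = \tilde{f}_\bs(\nu_m(\bs)) + \epsilon_m$ with $\epsilon_m$ the martingale difference sequence \eqref{MartDiffSeq}, the update becomes
$$\nu_{m+1}(\bs) = \nu_m(\bs) + \alpha_m\bigl(h(\nu_m(\bs)) + M_{m+1}\bigr), \qquad h := -\tilde{f}_\bs, \quad M_{m+1} := -\epsilon_m,$$
so that the associated limiting ODE is exactly $\dot{\nu} = h(\nu) = -\tilde{f}_\bs(\nu)$, which is \eqref{ODE}. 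It then remains to verify the four hypotheses of the cited convergence theorem: global Lipschitz continuity of $h$, the Robbins--Monro step-size conditions, a martingale-difference noise condition with at most quadratic conditional growth, and almost sure boundedness (stability) of the iterates.

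The first three conditions are immediate. Lipschitz continuity of $h = -\tilde{f}_\bs$ is precisely Lemma~\ref{lemma:properties}. The step-size conditions \eqref{RMcond} hold by assumption. For the noise, I would note that $\epsilon_m$ is a martingale difference with respect to $(\mathcal{F}^\epsilon_m)_m$ as established below \eqref{MartDiffSeq}, and that since $V_{\bs,m}$ is projected onto $[-1/2,1/2]$ we have $|V_{\bs,m}|\le 1/2$ and $|\tilde{f}_\bs|\le 1/2$, hence $|M_{m+1}| = |\epsilon_m|\le 1$ and $\mathbb{E}[M_{m+1}^2\mid\mathcal{F}^\epsilon_m]\le 1$. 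This trivially dominates the growth bound $\mathbb{E}[M_{m+1}^2\mid\mathcal{F}^\epsilon_m]\le C(1+\nu_m(\bs)^2)$ required in \citet{borkar2009stochastic}.

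The crux is the stability condition $\sup_m|\nu_m(\bs)|<\infty$ almost surely, which I would obtain directly from Lemma~\ref{lemma_nubound}. The square-summability $\sum_m\alpha_m^2<\infty$ forces $\alpha_m\to 0$ almost surely, so on almost every sample path there is a finite $M_\alpha$ with $\sup_m\alpha_m\le M_\alpha$; Lemma~\ref{lemma_nubound} then confines every iterate to the compact interval $[R_\ell-M_\alpha/2,\,R_u+M_\alpha/2]$, giving the required boundedness path-by-path. This is the main obstacle in principle, since in most stochastic approximation arguments stability is the delicate part, but here the work is already done by Lemma~\ref{lemma_nubound}: its drift structure (the increment is at most $\alpha_m/2$ while the drift points back toward $[R_\ell,R_u]$ once the iterate leaves it) supplies an a priori bound and circumvents the usual need for a separate Borkar--Meyn stability argument.

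With all four hypotheses verified, the convergence theorem of \citet[Chapter~2]{borkar2009stochastic} applies and yields that $(\nu_m(\bs))_m$ converges almost surely to a (sample-path-dependent) compact connected internally chain transitive invariant set of \eqref{ODE}, which is the claim. The only point requiring care is that $(\alpha_m)_m$ is permitted to be random and predictable rather than deterministic; since the step-size conditions, the conditional-variance bound, and the stability bound all hold on almost every sample path, I would apply the cited result conditionally path-by-path, so this causes no difficulty.
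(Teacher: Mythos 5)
Your proposal is correct and follows essentially the same route as the paper: both verify assumptions (A1)--(A4) of \citet[Chapter~2]{borkar2009stochastic}, using Lemma~\ref{lemma:properties} for Lipschitz continuity, the assumed Robbins--Monro conditions, boundedness of the martingale differences $\epsilon_m$, and Lemma~\ref{lemma_nubound} (via $\sum_m\alpha_m^2<\infty$ implying $\sup_m\alpha_m<\infty$ a.s.) for stability of the iterates. Your explicit path-by-path handling of the random step sizes is a point the paper leaves implicit, but it is the same argument.
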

	\begin{proof}
		We verify that assumptions (A1 - A4) in   \citet[Chapter 2]{borkar2009stochastic} are satisfied, from which the result follows by \citet[Chapter 2, Theorem 2]{borkar2009stochastic}:
		\begin{itemize}
			\item[(A1)] We have from Lemma~\ref{lemma:properties}  that $\tilde{f}_{\bs}$ is Lipschitz continuous.
			\item[(A2)] The Robbins-Monro conditions hold by assumption.
			\item[(A3)] The sequence $(\epsilon_m)_m$ defined by \eqref{MartDiffSeq} 
			is a bounded martingale difference sequence.
			\item[(A4)] Note that condition \eqref{RMcond} is sufficient for the condition on the step-size sequence in Lemma~\ref{lemma_nubound}, hence we have $\sup_m |\nu_{m}(\bs)|<\infty$ almost surely.   
		\end{itemize}
	\end{proof}
	The next corollary follows.
	\begin{corollary}\label{cor:convg_nu}
		Under Assumption~\eqref{RMcond}, the sequence $(\nu_{m}(\bs))_m$ generated by \eqref{stoch_approx}  converges almost surely to a random variable $\tilde{\nu}(\bs)\in[R_\ell, R_u]$ such that 
		$$\tilde{f}_{\bs}\left(\tilde{\nu}(\bs)\right) = 0.$$
	\end{corollary}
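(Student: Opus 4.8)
The plan is to sharpen the conclusion of Theorem~\ref{convgRM}: that theorem yields convergence of $(\nu_m(\bs))_m$ to some compact connected internally chain transitive invariant set $E$ of the ODE \eqref{ODE}, and it remains to show that $E$ is a single root of $\tilde{f}_{\bs}$ located in $[R_\ell,R_u]$. The engine is the scalar, autonomous, and (by Lemma~\ref{lemma:properties}) Lipschitz structure of \eqref{ODE}, together with a Lyapunov function. Concretely, I would take $F(\nu)=\int_{R_\ell}^{\nu}\tilde{f}_{\bs}(s)\,ds$, so that along any solution of \eqref{ODE} one has $\frac{d}{dx}F(\nu(x))=\tilde{f}_{\bs}(\nu(x))\,\dot{\nu}(x)=-\tilde{f}_{\bs}(\nu(x))^2\le 0$, vanishing exactly on the root set $\mathcal{R}_{\bs}$. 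I would first record the ``ODE always converges to an equilibrium'' step: a bounded solution of a scalar ODE with Lipschitz right-hand side is monotone (its velocity cannot change sign without crossing a zero of $\tilde{f}_{\bs}$, which uniqueness of solutions forbids) and hence converges to a finite limit $L$ with $\tilde{f}_{\bs}(L)=0$; boundedness of the relevant trajectories is free because Lemma~\ref{lemma_nubound} confines the flow of interest to $[R_\ell-M_\alpha/2,\,R_u+M_\alpha/2]$, which in particular guarantees a root exists.

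Next I would use $F$ to identify $E$. Internal chain transitivity together with the non-increase of $F$ along the flow forces $F$ to be constant on $E$ and $E\subseteq\mathcal{R}_{\bs}$, which is the standard Lyapunov consequence of the framework of \citet[Chapter~2]{borkar2009stochastic}. Since $F$ is strictly monotone off $\mathcal{R}_{\bs}$, a connected subset of a level set of $F$ contained in $\mathcal{R}_{\bs}$ is either a singleton or a maximal interval on which $\tilde{f}_{\bs}\equiv 0$. To obtain convergence to a single (possibly random) point in both cases, I would telescope \eqref{stoch_approx} using \eqref{MartDiffSeq} as $\nu_m(\bs)=\nu_0(\bs)-\sum_{\ell=0}^{m-1}\alpha_\ell\tilde{f}_{\bs}(\nu_\ell(\bs))-\sum_{\ell=0}^{m-1}\alpha_\ell\epsilon_\ell$ and observe that, once the iterates are near $E$, the drift term is negligible while the martingale sum $\sum_{\ell}\alpha_\ell\epsilon_\ell$ converges almost surely by \eqref{RMcond} and boundedness of $(\epsilon_m)_m$; hence $(\nu_m(\bs))_m$ is almost surely Cauchy and converges to a point $\tilde{\nu}(\bs)\in E\subseteq\mathcal{R}_{\bs}$.

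Finally I would localize $\tilde{\nu}(\bs)$ in $[R_\ell,R_u]$. Lemma~\ref{lemma_nubound} gives $\nu_m(\bs)\in[R_\ell-M_\alpha/2,\,R_u+M_\alpha/2]$; under \eqref{RMcond} we have $\alpha_m\to 0$, so applying the same confinement argument to the tail started at a large index $m_0$, where $\sup_{\ell\ge m_0}\alpha_\ell$ is arbitrarily small, squeezes the limit into $\bigcap_{m_0}[R_\ell-\tfrac12\sup_{\ell\ge m_0}\alpha_\ell,\,R_u+\tfrac12\sup_{\ell\ge m_0}\alpha_\ell]=[R_\ell,R_u]$. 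I expect the main obstacle to lie precisely here, intertwined with the reduction of a possibly degenerate ICT set to a point: $\tilde{f}_{\bs}$ is only Lipschitz and need not be monotone, and since it is a clipped, finite-sample quantity its sign just outside $[R_\ell,R_u]$---the inward-drift property that underlies both boundedness of the flow and the tail confinement---is the delicate ingredient driving Lemma~\ref{lemma_nubound}, and it is this boundary behaviour, rather than the routine Lyapunov bookkeeping, that must be handled with care.
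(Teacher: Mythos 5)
Your proposal is correct and its core is the same as the paper's: both arguments rest on the sign of $\tilde{f}_{\bs}$ outside $[R_\ell,R_u]$ (established in the proof of Lemma~\ref{lemma_nubound}) together with the fact that bounded solutions of a scalar autonomous Lipschitz ODE are monotone and converge to equilibria, so that every compact connected internally chain transitive set from Theorem~\ref{convgRM} is contained in $\mathcal{R}_{\bs}\cap[R_\ell,R_u]$; your Lyapunov function $F=\int\tilde{f}_{\bs}$ is just a more formal packaging of the paper's ``classification of solutions to a first-order scalar autonomous ODE,'' and your tail-confinement argument for localizing the limit is an unnecessary detour, since any root is automatically in $[R_\ell,R_u]$ by the sign condition. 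The one genuinely additional element is your telescoping/martingale step to reduce a possibly non-degenerate interval of roots to a single limit point; the paper silently treats the ICT set as a singleton and omits this. Be aware, though, that your added step is itself not airtight: the martingale part $\sum_\ell\alpha_\ell\epsilon_\ell$ does converge a.s.\ under \eqref{RMcond}, but ``the drift term is negligible near $E$'' does not follow, because $\sum_\ell\alpha_\ell=\infty$ and one would need summability of $\alpha_\ell|\tilde{f}_{\bs}(\nu_\ell(\bs))|$, i.e.\ control of $\sum_\ell\alpha_\ell\,d(\nu_\ell(\bs),E)$, which is not established. So on the only point where you go beyond the paper, the extra argument is incomplete; on everything the paper actually proves, you match it.
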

	\begin{proof}
		From the proof of Lemma~\ref{lemma_nubound}, we have $f_{\bs}(\nu) < 0$ for $\nu< R_\ell$ and $f_{\bs}(\nu ) > 0$ for $\nu> R_u$.
		Hence, $  \frac{d}{dx}\nu(x,\bs)> 0$ if $\nu< R_\ell$ and $ \frac{d}{dx}\nu(x,\bs)< 0$ if $\nu> R_u$. From Lipschitz continuity of $f_\bs$ it then follows that no solution of \eqref{ODE} goes to infinity.
		Hence, by classification of solutions to a first-order scalar autonomous ODE, we know that each solution of \eqref{ODE} must converge to a (semi-)stable point contained in the set $\mathcal{R}_{\bs}$ of roots of $f_{\bs}$, which is non-empty and contained in $[R_\ell, R_u]$ by the above discussion.
		%   as $f_{\bs}$ is (Lipschitz) continuous and is negative at values lower than $R_\ell$ and positive at values larger than $R_u.$ 
	\end{proof}
	If the step-size sequence is constant, i.e., $\alpha_m \equiv \alpha\in(0,\infty)$ for all $m$, the following result holds by \citet[Chapter 9, Theorem 3]{borkar2009stochastic}, which states that $\nu_{m}(\bs)$ is close to $\mathcal{R}_{\bs}$ in the limit in mean-square, but does not necessarily almost surely converge to a point in $\mathcal{R}_\bs$.
	\begin{theorem}[\sc Convergence of stochastic approximation for constant step-size]\label{Thm:constantSZ}
		\phantom{.}\\For a constant $H>0$
		\begin{equation*}
			\underset{m\rightarrow\infty}{\lim\sup}\;\mathbb{E}\left[\min_{\nu\in \mathcal{R}_{\bs}}\left(\nu_{m}(\bs) - {\nu}\right)^2\right] \leq H\alpha.
		\end{equation*}
	\end{theorem}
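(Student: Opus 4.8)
The plan is to derive the bound by invoking \citet[Chapter 9, Theorem 3]{borkar2009stochastic}, which asserts exactly a statement of the form $\limsup_{m\to\infty}\mathbb{E}[\operatorname{dist}(\nu_m(\bs),\mathcal{A})^2]\leq H\alpha$ for a constant step-size recursion that tracks an ODE whose globally asymptotically stable attractor is $\mathcal{A}$. The real work is therefore to check that the hypotheses of that theorem hold with $\mathcal{A}=\mathcal{R}_\bs$; once they do, the conclusion is immediate upon identifying $\operatorname{dist}(\nu_m(\bs),\mathcal{R}_\bs)^2 = \min_{\nu\in\mathcal{R}_\bs}(\nu_m(\bs)-\nu)^2$.

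First I would assemble the structural facts already in hand. Lemma~\ref{lemma:properties} gives that the drift $\tilde{f}_\bs$ is Lipschitz continuous, supplying the required regularity. The sequence $(\epsilon_m)_m$ from \eqref{MartDiffSeq} is, by the boundedness of $V_{\bs,m}$, a uniformly bounded martingale difference sequence with respect to $(\mathcal{F}^\epsilon_m)_m$. Since a constant step-size $\alpha_m\equiv\alpha$ trivially satisfies $\sup_m\alpha_m=\alpha<\infty$, Lemma~\ref{lemma_nubound} applies and yields $\sup_m|\nu_m(\bs)|<\infty$ almost surely, so the iterates generated by \eqref{stoch_approx} remain in a fixed compact set. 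These are precisely the regularity, noise, and stability conditions required.

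The step I expect to carry the genuine content is establishing that the root set $\mathcal{R}_\bs$ of $\tilde{f}_\bs$ is a nonempty compact \emph{globally} asymptotically stable attractor of \eqref{ODE}, since Borkar's Chapter~9 concentration result needs more than mere tracking of the ODE: it requires that the flow actually draws trajectories toward $\mathcal{R}_\bs$ from everywhere. This is supplied by the scalar dynamical-systems argument already carried out in the proof of Corollary~\ref{cor:convg_nu}. Because each $Z^{(k)}_{[k,j],u,n}(\nu)$ is negative for $\nu$ small and positive for $\nu$ large, and the projection in \eqref{eqn:Vsamp} is order-preserving, the field $-\tilde{f}_\bs$ points strictly inward at the endpoints of $[R_\ell,R_u]$; hence no solution of \eqref{ODE} escapes to infinity, and by the classification of solutions of a first-order scalar autonomous ODE every trajectory converges to a point of the nonempty compact set $\mathcal{R}_\bs\subseteq[R_\ell,R_u]$. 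Thus $\mathcal{R}_\bs$ is globally attracting.

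With Lipschitz drift, a bounded martingale difference sequence, almost surely bounded iterates, and a compact globally attracting set $\mathcal{R}_\bs$ for \eqref{ODE}, all hypotheses of \citet[Chapter 9, Theorem 3]{borkar2009stochastic} are met for the constant step-size recursion \eqref{stoch_approx}. Applying that theorem yields a constant $H>0$ for which $\limsup_{m\to\infty}\mathbb{E}\left[\min_{\nu\in\mathcal{R}_\bs}(\nu_m(\bs)-\nu)^2\right]\leq H\alpha$, which is the claim. The only point demanding care beyond bookkeeping is the global attractor property above; everything else is a direct transcription of the Robbins--Monro verification used for Theorem~\ref{convgRM}.
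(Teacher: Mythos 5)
Your proposal is correct and follows essentially the same route as the paper: both reduce the claim to \citet[Chapter 9, Theorem 3]{borkar2009stochastic} and verify its hypotheses via Lemma~\ref{lemma:properties} (Lipschitz drift, (A1)), the bounded martingale difference property of $(\epsilon_m)_m$ ((A3)), and Lemma~\ref{lemma_nubound} for the boundedness conditions (9.2.1)--(9.2.2). Your explicit verification that $\mathcal{R}_\bs$ is a nonempty compact globally attracting set for \eqref{ODE}, borrowed from the proof of Corollary~\ref{cor:convg_nu}, is a point the paper leaves implicit, so if anything your write-up is the more complete one.
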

	\begin{proof}
		We already saw in the proof of Theorem~\ref{convgRM} that assumptions (A1) and (A3) \mbox{in  \citet[Chapter~2]{borkar2009stochastic}} are satisfied. Furthermore (9.2.1) and (9.2.2) in~\citet{borkar2009stochastic} are satisfied as the iterates $(\nu_{m}(\bs))_m$ stay in a closed bounded interval (Lemma~\ref{lemma_nubound}). 
	\end{proof}
	\begin{remark}
		In the above results, we have shown two convergence results under two different assumptions on the step-size sequence. Theorem~\ref{convgRM} assumes that the Robbins-Monro conditions almost surely hold for the step-size sequence $(\alpha_m)_m$. This is a stronger condition than the one assumed in Section \ref{sec:3.3}, where only $\sup_m\alpha_m<M_{\alpha}$ was assumed for some bound $M_{\alpha}$.    Under this stronger condition we were able to show that the stochastic process $(\nu_m)_m$ converges to a root of the function $\tilde{f}_\bs$ and based on a sample path alone, we can determine whether the sequence has converged or not. If we instead take a constant step-size sequence, we could only show that the limit of the stochastic process is close to $\mathcal{R}_\bs$, in terms of mean-squared difference. Often the rate of convergence is much higher for constant step-size~sequences~(\citet{borkar2009stochastic}). 
	\end{remark}
	The following lemma gives a recursive formula for the derivative which can be used for selection of the step-size in the adaptive stochastic approximation procedure \eqref{stoch_approx}.
	\begin{lemma}\label{Thm:deriv}
		\Thmderiv
	\end{lemma}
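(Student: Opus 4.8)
The plan is to differentiate under the expectation in $\tilde{f}_\bs(\nu)=\mathbb{E}_\bs[V_\bs(\nu)]$ and then identify the pathwise derivative of the integrand. First I would record the pathwise structure. For each fixed realisation of the sampled paths, the base quantities $Z^{(1)}_{\bi,u,n}(\nu)=c\sum_{w=0}^{u-1}\gamma^w(\nu-R(\bS^{\bi}_w))$ are affine in $\nu$, and summing the geometric series gives slope $h^{(1)}_{\bi,n}(u,\nu)=(1-\gamma^u)/(2(R_u-R_\ell)(1-\gamma^N))$. Since a finite minimum of affine functions is continuous and piecewise linear with finitely many kinks, and \eqref{Zsamp2} builds $Z^{(k+1)}$ from such minima by an affine combination, an induction in $k$ (mirroring the one in Lemma~\ref{lemma:properties}) shows that every $Z^{(k)}_{\bi,u,n}(\cdot)$ is continuous and piecewise linear in $\nu$ with finitely many kinks, and that at any kink-free $\nu$ the envelope theorem gives $\tfrac{d}{d\nu}\min_{u}Z^{(k)}_{\bi,u,n}(\nu)=h^{(k)}_{\bi,n}(U^{(k)}_{\bi,n}(\nu),\nu)$, so differentiating \eqref{Zsamp2} term by term reproduces exactly the stated defining recursion for the $h^{(k)}$.

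Consequently the clipped simulation variable $V_\bs(\nu)=\max(-1/2,\min(1/2,W(\nu)))$, with $W$ the argument of the clip in \eqref{eqn:Vsamp}, is itself continuous and piecewise linear in $\nu$ with finitely many kinks on each path. At any $\nu$ that is neither a kink of $W$ nor a point where $W(\nu)=\pm 1/2$, the chain rule yields the pathwise derivative $V_\bs'(\nu)=W'(\nu)\,\mathbbm{I}(W(\nu)\in(-1/2,1/2))$, i.e. the envelope slope times the indicator that the clip is inactive. Passing from the top-level average over the replicates $j$ to the single representative $j=1$ by exchangeability (cleanest when $n(K,K)=1$, where $\tilde{V}_\bs=W$) rewrites this slope as $\sum_{k}h^{(k)}_{[k,1],n}(U^{(k)}_{[k,1],n}(\nu),\nu)$, which is the summand appearing in the claimed formula.

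The interchange $\tfrac{d}{d\nu}\mathbb{E}_\bs[V_\bs(\nu)]=\mathbb{E}_\bs[V_\bs'(\nu)]$ I would justify by dominated convergence applied to difference quotients: by the proof of Lemma~\ref{lemma:properties} each $Z^{(k)}$, and hence $V_\bs$, is Lipschitz in $\nu$ with a \emph{deterministic} constant, so the difference quotients are uniformly bounded and converge almost surely to $V_\bs'(\nu)$ at every $\nu$ at which $V_\bs(\cdot)$ is almost surely differentiable. Both one-sided limits then agree and equal $\mathbb{E}_\bs[V_\bs'(\nu)]$, giving genuine two-sided differentiability of $\tilde{f}_\bs$ at such $\nu$.

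The remaining and hardest step is to show that, except at finitely many $\nu$, the random function $V_\bs(\cdot)$ is differentiable at $\nu$ with probability one. For fixed $n,K,N$ the sampling tree is finite, so there are only finitely many kink events (an argmin switch, $Z^{(k)}_{\bi,u_1,n}(\nu)=Z^{(k)}_{\bi,u_2,n}(\nu)$) and boundary events ($W(\nu)=\pm 1/2$); each is located at a random value that is a ratio of a linear combination of the rewards $R(\bS_w)$ — at level one the breakpoint between consecutive pieces is exactly $\nu=R(\bS_w)$, and the general breakpoint is the weighted average $(\sum_{w=u_1}^{u_2-1}\gamma^w R(\bS_w))/(\sum_{w=u_1}^{u_2-1}\gamma^w)$. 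Under the hypothesis that each CDF of $R(\bS_t)$ has finitely many jumps, I would argue that each such random breakpoint has only finitely many atoms (an atom forcing the contributing rewards to sit simultaneously at their finitely many atoms), so the set of $\nu$ at which some kink event has positive probability is a finite union of finite sets, hence finite. For $\nu$ outside this finite set the kink probability is zero, $V_\bs(\cdot)$ is almost surely differentiable there, and the interchange delivers the stated formula, with the closed-versus-open interval in the indicator immaterial since $\{W(\nu)=\pm1/2\}$ is null off the exceptional set. The delicate point on which I expect to spend the most care is precisely this atom count: controlling the joint law along the Markov path so that the reward-combination breakpoints do not accumulate atoms at infinitely many locations despite the dependence between the $R(\bS_w)$.
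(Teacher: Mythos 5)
Your proposal follows essentially the same route as the paper's proof: establish the pathwise affine (piecewise-linear) structure of the $Z^{(k)}_{\bi,u,n}$ in $\nu$, show that argmin switches and clipping events can have positive probability at only finitely many $\nu$ under the finitely-many-jumps hypothesis, obtain the envelope derivative $h^{(k)}_{[k,1],n}\bigl(U^{(k)}_{[k,1],n}(\nu),\nu\bigr)$ at the remaining points, and pass the derivative through the expectation by dominated convergence with an indicator accounting for the truncation to $[-1/2,1/2]$. The paper packages your envelope-theorem step as explicit coinciding upper and lower bounds on the one-sided difference quotients (which is how it controls the $\nu$-dependence of the conditioning events $\{\bS^{[\bi,j]}_{\ixv{U}}=\bS^{\bi}_{\ixv{U}}\}$ via local constancy of the argmin), and it disposes of the atom count you flag as the delicate point by the same direct appeal to the hypothesis that you sketch, so your argument is aligned with the paper's at every step.
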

	\begin{proof}
		The full proof can be found in Appendix \ref{proofs}. The result might seem straightforward at first but the situation is more difficult due to the dependence of $U^{(k)}_{\bi,n}$ on $\nu$.
		The result is obtained by showing continuity of $U^{(k)}_{\bi,n}$ for all but at most finitely many points, by giving a coinciding lower and upper bound for the derivative of ${\arg\min}_{u\in[N]}Z_{\bi,u,n}^{(k)}$ whenever it exists and then using dominated convergence to show the derivative for the expected sum truncated to~$[-1/2,1/2]$.
	\end{proof}
	The next theorem states that using Lemma~\ref{Thm:deriv} an adaptive stochastic approximation method can be designed with asymptotically optimal variance (\citet{lai1979adaptive}). 
	
	\begin{theorem}[\sc Central limit theorem for stochastic approximation iterates]\label{thm:convg_dist}  \phantom{.}\\
		\Thmconvgdist
	\end{theorem}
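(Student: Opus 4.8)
The plan is to read the iteration \eqref{stoch_approx} with this data-dependent step size as an adaptive Robbins--Monro scheme in the sense of \citet{lai1979adaptive}: the gain $\alpha_m$ is tuned on-line to the reciprocal of the slope of the regression function at the root, which is exactly the gain delivering the asymptotically optimal variance. First I would settle existence and uniqueness of $\tilde\nu(\bs)$. By Lemma~\ref{lemma:properties} the function $\tilde f_\bs$ is Lipschitz, hence absolutely continuous and differentiable outside a Lebesgue-null set, namely the complement of $\mathcal V$. The hypothesis $\inf_{\nu\in\mathcal V}\tfrac{d}{d\nu}\tilde f_\bs(\nu)>0$ then gives, for $\nu_1<\nu_2$,
$$\tilde f_\bs(\nu_2)-\tilde f_\bs(\nu_1)=\int_{\nu_1}^{\nu_2}\tfrac{d}{d\nu}\tilde f_\bs(t)\,dt\ge c_0\,(\nu_2-\nu_1),\qquad c_0:=\inf_{\nu\in\mathcal V}\tfrac{d}{d\nu}\tilde f_\bs(\nu)>0,$$
so $\tilde f_\bs$ is strictly increasing. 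Since $\tilde f_\bs$ is continuous and, by the sign analysis used in the proof of Lemma~\ref{lemma_nubound}, equals $-1/2$ for sufficiently small $\nu$ and $+1/2$ for sufficiently large $\nu$, the intermediate value theorem yields a zero, and strict monotonicity makes it the unique point $\tilde\nu(\bs)$.

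The core of the argument is to show that almost surely
$$m\,\alpha_m=\frac{m}{\bigl|\sum_{\ell=1}^m h_\ell(\nu_\ell(\bs))\bigr|}\longrightarrow \frac{1}{\beta},\qquad \beta:=\tfrac{d}{d\nu}f^{(K)}_{\bs,n}(\tilde\nu(\bs)).$$
By Lemma~\ref{Thm:deriv} applied to the untruncated sum, $h_\ell$ is an unbiased slope estimator: since $h_\ell$ is drawn from the $\ell$-th, independent copies while $\nu_\ell(\bs)$ is a function of the samples drawn strictly before step $\ell$, we have $\mathbb E[h_\ell(\nu_\ell(\bs))\mid \nu_\ell(\bs)]=\tfrac{d}{d\nu}f^{(K)}_{\bs,n}(\nu)\big|_{\nu=\nu_\ell(\bs)}$. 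Hence $\{\,h_\ell(\nu_\ell(\bs))-\tfrac{d}{d\nu}f^{(K)}_{\bs,n}(\nu_\ell(\bs))\,\}_\ell$ is a martingale difference sequence; a martingale strong law makes its Cesàro average vanish, and combining this with the consistency $\nu_\ell(\bs)\to\tilde\nu(\bs)$ and continuity of the slope yields $\tfrac1m\sum_{\ell=1}^m h_\ell(\nu_\ell(\bs))\to\beta$, which is the displayed limit. Consistency itself I would obtain from Corollary~\ref{cor:convg_nu} after verifying that the adaptive gain eventually behaves like $a/\ell$ and therefore satisfies \eqref{RMcond}, the unique root being $\tilde\nu(\bs)$ by the first step.

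Finally, writing $V_{\bs,m}(\nu_m(\bs))=\tilde f_\bs(\nu_m(\bs))+\epsilon_m$ with $(\epsilon_m)_m$ the martingale differences \eqref{MartDiffSeq}, I would linearize at the root and substitute $\alpha_m\sim 1/(m\beta)$ to reach the canonical $a/m$ recursion
$$\nu_{m+1}(\bs)-\tilde\nu(\bs)=\Bigl(1-\tfrac{b}{m\beta}\Bigr)\bigl(\nu_m(\bs)-\tilde\nu(\bs)\bigr)-\tfrac1{m\beta}\,\epsilon_m+o\bigl(|\nu_m(\bs)-\tilde\nu(\bs)|\bigr),\qquad b:=\tfrac{d}{d\nu}\tilde f_\bs(\tilde\nu(\bs)).$$
Applying the stochastic-approximation central limit theorem of \citet{lai1979adaptive} (equivalently, a martingale CLT for the rescaled error after checking Lindeberg's condition and $\mathbb E[\epsilon_m^2\mid\mathcal F^\epsilon_{m-1}]\to\mathbb E[V_\bs(\tilde\nu(\bs))^2]$, which follows from continuity of $\nu\mapsto\mathbb E[V_\bs(\nu)^2]$, from $\nu_m(\bs)\to\tilde\nu(\bs)$, and from $\tilde f_\bs(\tilde\nu(\bs))=0$) gives a Gaussian limit with variance $a^2\sigma^2/(2ab-1)$, where $a=1/\beta$ and $\sigma^2=\mathbb E[V_\bs(\tilde\nu(\bs))^2]>0$. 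The adaptive gain is exactly what forces $ab=1$: under the stated hypotheses the truncation is inactive at the root, so $b=\tfrac{d}{d\nu}\tilde f_\bs(\tilde\nu(\bs))=\tfrac{d}{d\nu}f^{(K)}_{\bs,n}(\tilde\nu(\bs))=\beta$, whence $2ab-1=1$ and the variance collapses to $\sigma^2/\beta^2$, which is \eqref{eqn:CLT_SA}.

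The hard part will be the second step, the almost-sure convergence $m\,\alpha_m\to1/\beta$. Consistency of $\nu_\ell(\bs)$ and convergence of the gain are intertwined---\eqref{RMcond} cannot simply be assumed, because $(\alpha_m)_m$ is itself random and built from the same samples---so these have to be established simultaneously, as in \citet{lai1979adaptive}, possibly through a stability or projection argument. One must also ensure that $\alpha_m$ stays finite and of the right order, i.e.\ that $\sum_{\ell=1}^m h_\ell(\nu_\ell(\bs))$ does not vanish or change sign along the path. A secondary subtlety is the identification $\tfrac{d}{d\nu}\tilde f_\bs(\tilde\nu(\bs))=\tfrac{d}{d\nu}f^{(K)}_{\bs,n}(\tilde\nu(\bs))$, matching the truncated drift slope to the untruncated normalization used in the gain; without it the limiting variance would retain the extra factor $1/(2ab-1)$.
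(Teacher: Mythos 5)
Your overall strategy coincides with the paper's: establish uniqueness of the root from the positive-derivative hypothesis, prove that $m\alpha_m$ converges almost surely to the reciprocal of $\frac{d}{d\nu}f^{(K)}_{\bs,n}(\tilde{\nu}(\bs))$ via a martingale strong law plus consistency of $\nu_m(\bs)$, and then invoke the adaptive stochastic-approximation CLT of Lai and Robbins. The one place where you stop short is exactly the step you flag as "the hard part": you treat the verification of the Robbins--Monro conditions \eqref{RMcond} for the random gain and the consistency $\nu_m(\bs)\to\tilde{\nu}(\bs)$ as intertwined and leave their simultaneous establishment to a vague appeal to a "stability or projection argument." The paper breaks this apparent circularity by verifying \eqref{RMcond} \emph{without} any consistency input: $\sum_m\alpha_m=\infty$ follows from the deterministic bound $|h_\ell|\leq H$, and $\sum_m\alpha_m^2<\infty$ follows by writing $\frac{1}{m}\sum_{\ell=1}^m h_\ell(\nu_\ell(\bs))$ as $\frac{1}{m}\sum_{\ell}\mathbb{E}[h_1(\nu_\ell(\bs))]$ plus a Ces\`aro average of bounded martingale differences; the latter vanishes by the martingale SLLN, while the former is bounded below by $\inf_\nu\mathbb{E}[h_1(\nu)]>0$ \emph{uniformly over the trajectory}, so that eventually $\alpha_m\leq C/m$ along every sample path. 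Only after this does Corollary~\ref{cor:convg_nu} deliver consistency, and then continuity of $h$ at $\tilde{\nu}(\bs)$ (which the paper extracts from differentiability of $\tilde{f}_\bs$ at $\tilde{\nu}(\bs)$: a jump of $h_\ell$ there with positive probability would contradict existence of the derivative) gives $m\alpha_m\to 1/\mathbb{E}[h_1(\tilde{\nu}(\bs))]$. Without this decoupling your second step does not go through as written, since \eqref{RMcond} "cannot simply be assumed," as you yourself note.

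Two smaller remarks. First, your observation that the stated variance requires identifying the slope of the truncated drift $\tilde{f}_\bs$ with that of the untruncated $f^{(K)}_{\bs,n}$ at the root is a legitimate point that the paper's proof also passes over quickly; your reading (the indicator $\mathbbm{I}(\tilde{V}_{\bs}\in[-1/2,1/2])$ must be inactive at $\tilde{\nu}(\bs)$ for the variance to collapse to $\sigma^2/\beta^2$) is consistent with how the theorem is stated. Second, your integral argument for strict monotonicity of $\tilde{f}_\bs$ is a slightly more explicit version of what the paper dismisses as trivial, and is fine since the non-differentiability set is finite by Lemma~\ref{Thm:deriv}.
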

	\begin{proof}
		% The first statement follows as $f_{\bs,n}^{(K)}(\nu)<0$ for $\nu<R_\ell$, $f_{\bs,n}^{(K)}(\nu)>0$ for $\nu>R_u$ and  $f_{\bs,n}^{(K)}$ is strictly increasing as it is (Lipschitz) continuous and as $\mathbb{E}[h_{1,n}^{(K)}(\nu)]=f^{(K)}_{\bs,n}(\nu)>0$ for all but countably many points $\nu$ in $[R_\ell,\;R_u]$.
		
		The full proof can be found in Appendix \ref{proofs}. We first show that the sequence $m\alpha_m$ converges to a constant, by showing that $\alpha_m$ satisfies the Robbins-Monro conditions almost surely and by applying Corollary~\ref{cor:convg_nu}. After this, we can apply the results in \citet{lai1978limit}, where we have to account for the fact that the residuals are not independent and identically distributed but bounded, and the function is only differentiable at all but finitely many points.
	\end{proof}
	\begin{remark}[Step-size sequence and stopping criterion]\label{rem:criterion}We propose to use the adaptive step-size sequence $\alpha_m~=~1/|\sum_{\ell=1}^m h_\ell(\nu_\ell(\bs))|$ for the stochastic approximation sequence \eqref{stoch_approx}, and base the stopping criterion for the stochastic approximation sequence on the estimated radius of the confidence interval implied by \eqref{eqn:CLT_SA}.
		We stop  the stochastic approximation procedure \eqref{stoch_approx} when the estimated confidence radius based on Theorem~\ref{thm:convg_dist} is small enough, i.e., when \begin{equation}
			\frac{ C_{1-\beta}\sqrt{\frac{1}{m}\sum_{\ell = 1}^m(V_{\bs,\ell}(\nu_{\ell}(\bs)))^2}}{\sqrt{m}|\frac{1}{m}\sum_{\ell=1}^m h_{\ell}(\nu_{\ell}(\bs))|}\leq \epsilon_\nu ,\label{stoppingcriterium}
		\end{equation}
		with $C_{1-\beta}$ the level $1-\beta/2$ quantile of the standard normal distribution, and $\epsilon_\nu$ a pre-specified tolerance.
	\end{remark}
	\begin{remark}\label{rem:behavior_large_n}
		When all values $n(\bi,k)$ are large enough, we have by Theorem~\ref{theorem:approxthm}
		\begin{align}\tilde{f}_\bs(\nu)&\approx \sum_{k=1}^K\mathbb{E}_\bs\left[\min_{u\in [N]} Z_u^{(k)}(\nu)\right]\leq \inf_{\tau\in\mathcal{T}_N}\mathbb{E}_\bs[Z_\tau(\nu)]=f_{\bs}(\nu),\label{firstineq_remark}\\
			f_{\bs}(\nu)&\leq \sum_{k=1}^K\mathbb{E}_\bs\left[\min_{u\in [N]} Z_u^{(k)}(\nu)\right] + 1/(K+1) \approx \tilde{f}_\bs(\nu) + 1/(K+1)\label{secondineq_remark},
		\end{align}
		where we used $b=-a=1/2.$
		As both $\tilde{f}_\bs$ and $f_\bs$ are continuous, negative at $R_\ell$, positive at $C_R$, and $f_\bs$ is increasing,  we expect from \eqref{firstineq_remark} that the root of $\tilde{f}_\bs$ is an upper bound of the root of ${f}_\bs$. As $\mathbb{E}_\bs[\min_{u\in[N]}Z^{(k)}_u(\nu)]\geq 0$ for $k\geq 2$, we furthermore expect that the root of $\tilde{f}_\bs$ decreases to the root of $f_\bs$, and by \eqref{secondineq_remark} we expect that both roots coincide in the limit as $K\rightarrow\infty$. We furthermore have by \eqref{Ineq: bounds_f} and \eqref{secondineq_remark} that, for large enough values of $n(\bi,k)$,
		\begin{equation}
			\tilde{\nu}(\bs) - \nu_\sigma(\bs)\leq  f_\bs(\tilde{\nu}(\bs))/c\leq 1/(c\cdot(K+1)),\label{convg_root}
		\end{equation}
		hence the difference in the two roots is of order $O(1/(K+1))$.
		In order to get an accurate SBGIA, we hence propose, for a fixed $K$, to first increase all $n(\bi,k)$ to large enough values such that the root of $\tilde{f}_\bs$ has converged. After convergence has occurred, we propose to increase $K$ and to repeat this procedure until the root has also converged in~$K$.  
	\end{remark}
	
	\section{Application to Bayesian multi-armed bandits}\label{sect:numerical_results}
	
	We introduce the Bayesian \mbox{multi-armed} bandit in  Section \ref{sec:intro_bayes_experiment} as an application of the FABP. In Section \ref{sec:52}, we  consider outcome distributions from an exponential family. Subsequently, we present results for the SBGIA applied to two Bayesian bandits known from literature, the Bernoulli bandit and Gaussian bandit with known variance. In Section \ref{Subsec:Randomeffects}, we evaluate the performance of the SBGIAP for a novel Gaussian random effects bandit problem.
	
	\subsection{Bayesian multi-armed bandit}\label{sec:intro_bayes_experiment}
	
	Consider $A$ distributions with support $\mathcal{O}$. Selecting distribution $a$ at time $t\in \mathbb{N}$, results in a realisation of the random variable $O_t^a$ that has a known density $p(O_t^a|\btheta_a)$
	w.r.t. a measure $\mu$, where the unknown parameter $\btheta_a$ lies in a parameter space $\Theta$ (shared for all $a$). The random variables $O_t^a$ are assumed independent. 
	We perform a Bayesian analysis, where the parameters $\btheta_a$ are independent a priori and endowed with prior probability measure $\Pi^a_0$ w.r.t.\ the Polish space $(\Theta, \mathcal{B}(\Theta))$.
	Given this probability measure $\Pi_0^a$ on $(\Theta, \mathcal{B}(\Theta))$, we can determine the predictive distribution  \begin{equation}p(O\mid\Pi_0^a) = \int_{\Theta}p(O\mid\btheta) d\Pi_0^a(\btheta).\label{postupdating}\end{equation}
	A sample from this distribution then, in turn, generates a posterior distribution $\Pi_1^a(\cdot \mid O,\Pi^a_0)$ by Bayes' rule, i.e.,
	\begin{equation}\Pi^a_1(E \mid O,\Pi^a_0) = 
		\int_E 
		\frac{p(O\mid\btheta)}{\int_\bTheta p(O\mid\btheta)d\Pi_0^a(\btheta)}d\Pi_0^a(\btheta)\;\;\;\; \forall E\in\mathcal{B}(\Theta).\label{posterior_update}\end{equation}
	We can use the predictive distribution \eqref{postupdating} and posterior updating rule \eqref{posterior_update} to determine $A$ Markov chains (arms), with states $(\Pi_t^a)_{a=1}^A$ corresponding to (posterior) distributions on $\Theta$. The state space of each Markov chain is the space $\mathbb{M}$ of probability measures on $(\Theta,\mathcal{B}(\Theta))$.
	Furthermore, we endow this state space with the sigma algebra $\mathcal{M}$, which is the smallest sigma field making all maps from $\mathbb{M}$ to $\mathbb{R}$ measurable. Then $(\mathbb{M},\,\mathcal{M})$ is a Borel space (\citet[Chapter 3.1]{ghosal2017fundamentals}). 
	Each Markov chain $(\Pi_t^a)_{a=1}^A$ has transition kernel
	\begin{equation}
		\mathbb{P}_{\Pi^a_t}(E) = \int_{\mathcal{O}} \mathbb{I}\left(\Pi_{t+1}^a(\cdot\mid O,\Pi^a_t)\in E\right)p(O\mid\Pi^a_t)\mu(dO) \;\;\;\; \forall E\in\mathcal{M}.\label{transkern:posterior}
	\end{equation}
	
	Our goal is to find a (Markov) policy $\pi$ to sequentially sample from one of the $A$ arms that maximizes the expected discounted sum of outcomes under the Bayesian model, i.e, to maximize
	\begin{equation}\mathbb{E}_{\pi}\left[\sum_{t=0}^\infty \gamma^t O_{t+1}^{A_t}\right] = \mathbb{E}_{\pi}\left[\sum_{t=0}^\infty \gamma^t R(\Pi_t^{A_t})\right],\label{eqn:Totreward_omschrijven} \end{equation}
	where $ R(\Pi^a_t)= \int_{\Theta} \mathbb{E}[O\mid\btheta]d\Pi^a_t(\btheta)$ is the posterior mean outcome for the current posterior $\Pi_t^a$. The equality in \eqref{eqn:Totreward_omschrijven} follows from Assumption \ref{boundedtotrew} and Fubini's theorem. 
	Letting $\bS_t^a = \Pi_t^a$ for all $a,t$, $\mathcal{S} = \mathbb{M}$, and $\mathcal{G} = \mathcal{M}$, we are in the setting of Section \ref{FABP} with transition kernel \eqref{transkern:posterior} and reward function $R$.

	% From Section \ref{FABP} it is seen that objective now corresponds to determining a policy $\pi^*$ that maximizes the Bayesian expected total discounted sum of outcomes under the initial prior $\Pi$
	% $$\mathbb{E}_{\pi}\left[\sum_{t=0}^\infty \gamma^t R\left(\Pi^{A_t}_{N_{A_t,t}}\right)\right] = \mathbb{E}^{\Pi}_{\pi}\left[\sum_{t=0}^\infty \gamma^t\cdot Y^{A_t}_{N_{A_t,t}}\right].$$
	
	% The next three subsections give concrete examples of this situation, with numerical results.
	
	\subsection{Gittins index approximation results}\label{sec:52}
	%The model described in Section \ref{sec:intro_bayes_experiment} is a general model for the Bayesian multi-armed bandit under a common outcome space, independent outcomes and a priori independent parameters. 
	This section considers the FABP  with distributions  from an exponential family as detailed in Section \ref{ExpFam}. Specific results are presented for Bernoulli and Gaussian families in Sections \ref{sec:results_Bernoulli} and \ref{sec:results_Gaussian}.

	For determining the SBGIA, we set $\beta = 0.05$ and $\epsilon_\nu = 0.001$ in \eqref{stoppingcriterium} to estimate a $95\%$ asymptotic confidence interval for $\tilde{\nu}(\bs)$ with radius $0.001$.
	We compare the SBGIA with the Calibration method introduced in~\citet{gittins1979bandit}, which is a combination of a bisection method and backward induction to obtain the value of the truncated optimal stopping problem in \eqref{defGI2}. The parameters of the Calibration method are set such that the approximation error is very small, so that we may consider these values to be the true Gittins index values.
	
	\subsubsection{Exponential families~\label{ExpFam}}

	Assume the data comes from a distribution belonging to an exponential family,  i.e., for known functions $\bpsi,\eta,\rho,\zeta$ we have
	$$p(O^a\mid\btheta_a) = \zeta(O^a)\exp(\eta(\btheta_a)^\top \bpsi(O^a)-\rho(\btheta_a))d\mu(O^a),$$
	and
	$$p((O^a_t)_t\mid\btheta_a) = \prod_{u=1}^t\zeta(O^a_u)\exp\left(\eta(\btheta_a)^\top \bpsi(O^a_{u})-\rho(\btheta_a)\right)d\mu(O^a_{u}).$$
	Now assume  a conjugate prior for this model (\citet{diaconis1979conjugate}), with normalizing constant $\zeta_2$
	$$p(\btheta_a;\bPsi_0^a,\kappa^a_0) = \zeta_2(\bPsi^a_0,\kappa^a_0) \exp(\eta(\btheta_a)^\top\bPsi^a_0 - \kappa^a_0 \rho(\btheta_a)).$$ Letting $\bPsi_t^a = \bPsi^a_0 + \sum_{u=1}^t\bpsi(O_{u}^a)$ and $\kappa^a_t = \kappa^a_0+t$, we have the following expression for the posterior
	\begin{align*}
		p(\btheta_a\mid(O^a_t)_t)&=\zeta_2(\bPsi^a_t,\kappa^a_t) \exp(\eta(\btheta_a)^\top\bPsi^a_t - \kappa^a_t \rho(\btheta_a))=:p(\btheta_a;\bPsi^a_t,\kappa^a_t).
		% p(Y|\btheta_a)p(\btheta_a|\bS^a_{\ixt}) &= \zeta_2(\bPsi_t,\kappa_t )\zeta(Y)\exp(\eta(\btheta_a)^\top(\bPsi_t + \bpsi(Y)) - (\kappa_t +1)A(\btheta_a))
	\end{align*}
	The vector $\bPsi_t^a$ is often referred to as the sufficient statistic, and $\kappa^a_t$ as the effective number of observations, which is the sum of a prior number of observations $\kappa^a_0$ and the actual number of observations $t$ for arm $a.$
	As the only random element in the posterior is $\bPsi^a_t$, which we will assume to lie in $\mathbb{R}^d$, the Markov chain $(\Pi^a_t)_t$ can be represented by the time-inhomogeneous Markov chain $(\bPsi^a_t,\kappa^a_t)_t$ on the finite-dimensional state space  $\mathbb{R}^{d+1}$ with transition dynamics
	\begin{equation}
		(\bPsi^a_t,\kappa^a_t)\rightarrow (\bPsi^a_t, \kappa^a_t) + (\bpsi(O),1),\;\;\;\; O\sim p(O\mid(\bPsi^a_t,\kappa^a_t)).\label{transition_dynamics_suff}
	\end{equation}
	The  rewards  $R(\bPsi^a_t,\kappa^a_t) = \int_\Theta \mathbb{E}[O\mid \btheta] \;p(\btheta;\bPsi^a_t,\kappa^a_t)d\btheta$ are the posterior mean outcomes. As our goal is to maximize the expected discounted sum of outcomes under the Bayesian model, letting \mbox{$\bS_t^a = (\bPsi_t^a, \kappa^a_t)$} for all $a,t$, $\mathcal{S} = \mathbb{R}^{d+1}$, and $\mathcal{G} = \mathcal{B}(\mathbb{R}^{d+1})$, we are in the setting of Section \ref{FABP} with the transition kernel implied by \eqref{transition_dynamics_suff} and reward function $R$. 
	% By the  Pitman–Koopman–Darmois theorem \cite{koopman1936distributions}, the above setting, where the state space is finite-dimensional, only holds for exponential family models.
	
	%\subsubsection{Evaluation approach} \label{sec:521}

	%Next, approximations of Gittins index values will also be calculated with the Calibration method introduced in~\cite{gittins1979bandit}, which is a combination of the bisection method and backward induction for finding the value of the truncated optimal stopping problem in \eqref{defGI2}. The parameters of the Calibration method are set such that the approximation error is very small, and they will be considered to be the true Gittins index values in the evaluation. 
	% \item {\bf Calibration approximation}: this is the method introduced in~\cite{gittins1979bandit} which is a combination of the bisection method and backward induction for finding the value of the truncated optimal stopping problem in \eqref{defGI2}.
	% \end{itemize}

\subsubsection{Bernoulli bandit \label{sec:results_Bernoulli}}
We consider the case when $O^a_{t}$ are Bernoulli distributed with unknown success probability $p_a\in[0,1]$, i.e., 
$$\mathbb{P}(O^a = 1) = p_a,\;\;\;\; \mathbb{P}(O^a = 0) = 1-p_a,$$ which implies that the family of outcome distributions is an exponential family with 
\begin{align*}
	\zeta&\equiv 1,\;\eta(p_a)=\log(p_a/(1-p_a)),\;\Psi(O) = O,\;\rho(p_a) = -\log(1-p_a),%,\; \zeta_2(\Psi,t) = \Gamma(\Psi')\Gamma(t-\Psi')/\Gamma(t).
\end{align*} and $\mu$ the counting measure on the nonnegative integers.
We assume a conjugate $\text{Beta}(\alpha_a,\beta_a)$ prior on each $p_a$, hence $\Psi^a_t = \alpha_a + \sum_{u=1}^tO^a_u$ and  $\kappa^a_t = \alpha_a + \beta_a + t$.
In the following, we drop the superscript $a$ from notation as we consider results for a single arm $a$ only.

Observe that $R(\Psi,\kappa) = \Psi/\kappa$, hence for a fixed horizon $N\in\mathbb{N}$ the Markov reward process starting from $(\Psi, \kappa)$ is bounded. Thus, in Section \ref{sect: theoretical_results} we can take $\sigma = N$, and the result of Theorem~\ref{boundtrunc} holds without truncation of the support of the rewards. Using $\nu(\bS_{N})^+\leq 1$, and letting the Gittins index approximation after truncation be denoted by $\nu_N(\Psi,\kappa)$ (as $
\sigma = N$), we have
\begin{equation}
	0\leq \nu(\Psi,\kappa)-\nu_N(\Psi,\kappa)\leq \frac{\gamma^N}{1-\gamma^N},\label{bound_TE}
\end{equation}
hence for $\epsilon_\text{trunc}>0$
\begin{equation}
	N\geq \log_\gamma(\epsilon_\text{trunc}/(1+\epsilon_\text{trunc}))\implies \nu(\Psi,\kappa)-\nu_N(\Psi,\kappa)\leq \epsilon_\text{trunc}.\label{requirement_N_bernoulli}
\end{equation}
For the rewards, we have $$R(\Psi, \kappa+w)\in [ \Psi/(\kappa+ w),\; (\Psi+w)/(\kappa+ w)]\;\;\;\; \forall w\in [N].$$
Starting from $(\Psi, \kappa)$ the rewards lie in the bounded interval $[R_\ell,\, R_u]$ up to a horizon $N$ for $$R_\ell = \Psi/(\kappa + N)\text{, and }R_u= (\Psi+N)/(\kappa+N),$$ which can be used in \eqref{Z_unn}.

Table  \ref{tab:res_bernoulli_gamma80_T21_exp_k} shows the Gittins index values found under the  SBGIA and the Calibration method for the Bernoulli bandit with $\gamma=0.8$.
For the SBGIA and the Calibration method, we set $N=35$, corresponding to a truncation error bound $\epsilon_{\text{trunc}}$ of $0.0005$ according to \eqref{requirement_N_bernoulli}.  We set each $n(\bi,k)=1$ and considered $K=1,2,3$. 
% We calculated the values of the Gittins indices with the Calibration method up to an absolute error $\epsilon_
% \text{trunc}$ of $0.001$, see \eqref{stoppingcriterium}. 
For ease of comparison, Table~\ref{tab:res_bernoulli_gamma80_T21_exp_k} shows  $\kappa-\Psi$, the effective number of failures, as the second state variable.
Table~\ref{tab:res_bernoulli_gamma80_T21_exp_k} shows the log computation time (in seconds), estimated bias (SBGIA minus Calibration), root-mean-squared error (RMSE), and average standard deviation over estimates for the SBGIA (calculated as the average deviation in the approximation over states for two independent simulation runs) for each considered value of $K$. Using \eqref{convg_root},  the rightmost column of Table~\ref{tab:res_bernoulli_gamma80_T21_exp_k} shows an estimate of the limit as $K\rightarrow\infty$, found by performing ordinary least squares on the columns $K=1,2,3$ with the line \mbox{$\nu(\bs)=a + b/(K+1)$}.

Table~\ref{tab:res_bernoulli_gamma80_T21_exp_k} shows that the Gittins index is overestimated by the SBGIA for $K=1$, which is in agreement with our expectation, as the expected minimum is always smaller than the minimum of the expectation over stopping times (see, e.g.,  \citet{chen2018beating}). When $K$ increases, the amount of overestimation decreases, and 
values of the SBGIA lie above the Gittins index computed by the Calibration method for any state and value of $K$.
It follows from Remark \ref{rem:behavior_large_n} that this behaviour should occur asymptotically when the values of $n(\bi, k)$ go to infinity;  the numerical results show that it also occurs for small values of $n(\bi,k)$.
The estimate of the limit when $K\rightarrow\infty$, shown in the rightmost column of Table~\ref{tab:res_bernoulli_gamma80_T21_exp_k} has lowest bias and RMSE, indicating a correct assumption on the $O(1/(K+1))$ convergence rate. The computation time increases more than tenfold with each increase in $K$, and for $K=1$ it is already about 100 times larger than the computation time of the Calibration method. The standard deviation over runs (SD) is quite low, around $0.0001$ for all values of $K$, indicating that the estimates are consistent over independent runs.

\begin{table}[h!]
	\centering
	\caption{Gittins index values found under the SBGIA for $K=1,2,3$, the Calibration method, and an estimate for the limit as $K\rightarrow\infty$ for the Bernoulli bandit.  We set each $n(\bi,k)=1$, $\gamma = 0.8, N = 35$, and set a tolerance of $\epsilon_\nu=0.001$ in the stopping criterion \eqref{stoppingcriterium}  for determining the SBGIA. The computation (CPU) time denotes the time it took to calculate all values in the column and is measured in seconds. The bias, RMSE, and standard deviation (SD) are multiplied by $100$, i.e., displayed in percentage points. \label{tab:res_bernoulli_gamma80_T21_exp_k}}
	\begin{tabular}{lccccccc}\hline
		General & $ \Psi$ & $ \kappa - \Psi$ & Calibration & $ K=1$ & $ K=2$ & $ K=3$ & Est. limit\\\hline
		& 1 & 1 & 0.641 & 0.643 & 0.642 & 0.642 & 0.641\\
		& 1 & 2 & 0.443 & 0.447 & 0.446 & 0.445 & 0.442\\
		& 1 & 3 & 0.332 & 0.338 & 0.337 & 0.335 & 0.332\\
		& 1 & 4 & 0.263 & 0.270 & 0.268 & 0.267 & 0.264\\
		& 1 & 5 & 0.216 & 0.224 & 0.222 & 0.221 & 0.218\\
		& 1 & 6 & 0.183 & 0.191 & 0.189 & 0.188 & 0.185\\
		& 2 & 1 & 0.760 & 0.760 & 0.760 & 0.760 & 0.759\\
		& 2 & 2 & 0.590 & 0.592 & 0.591 & 0.591 & 0.590\\
		& 2 & 3 & 0.476 & 0.480 & 0.479 & 0.478 & 0.476\\
		& 2 & 4 & 0.398 & 0.402 & 0.401 & 0.400 & 0.398\\
		& 2 & 5 & 0.340 & 0.345 & 0.344 & 0.342 & 0.340\\
		& 2 & 6 & 0.296 & 0.301 & 0.300 & 0.299 & 0.297\\
		& 3 & 1 & 0.816 & 0.816 & 0.816 & 0.816 & 0.815\\
		& 3 & 2 & 0.671 & 0.673 & 0.673 & 0.672 & 0.671\\
		& 3 & 3 & 0.566 & 0.568 & 0.568 & 0.567 & 0.566\\
		& 3 & 4 & 0.487 & 0.490 & 0.489 & 0.489 & 0.487\\
		& 3 & 5 & 0.427 & 0.430 & 0.429 & 0.428 & 0.427\\
		& 3 & 6 & 0.379 & 0.383 & 0.382 & 0.381 & 0.379\\
		& 4 & 1 & 0.849 & 0.850 & 0.849 & 0.849 & 0.849\\
		& 4 & 2 & 0.725 & 0.726 & 0.725 & 0.725 & 0.724\\
		& 4 & 3 & 0.628 & 0.629 & 0.629 & 0.629 & 0.628\\
		& 4 & 4 & 0.552 & 0.554 & 0.554 & 0.553 & 0.552\\
		& 4 & 5 & 0.491 & 0.494 & 0.494 & 0.493 & 0.492\\
		& 4 & 6 & 0.443 & 0.446 & 0.445 & 0.444 & 0.443\\
		& 5 & 1 & 0.872 & 0.873 & 0.872 & 0.872 & 0.871\\
		& 5 & 2 & 0.762 & 0.763 & 0.763 & 0.762 & 0.762\\
		& 5 & 3 & 0.674 & 0.675 & 0.675 & 0.674 & 0.673\\
		& 5 & 4 & 0.602 & 0.604 & 0.603 & 0.603 & 0.602\\
		& 5 & 5 & 0.543 & 0.545 & 0.545 & 0.544 & 0.543\\
		& 5 & 6 & 0.494 & 0.497 & 0.496 & 0.496 & 0.495\\
		& 6 & 1 & 0.888 & 0.889 & 0.888 & 0.888 & 0.887\\
		& 6 & 2 & 0.790 & 0.791 & 0.791 & 0.791 & 0.790\\
		& 6 & 3 & 0.709 & 0.710 & 0.710 & 0.709 & 0.709\\
		& 6 & 4 & 0.641 & 0.643 & 0.642 & 0.642 & 0.641\\
		& 6 & 5 & 0.585 & 0.586 & 0.586 & 0.586 & 0.585\\
		& 6 & 6 & 0.537 & 0.539 & 0.538 & 0.538 & 0.537\\\hline
		$\log_{10}$ CPUtime &  &  & 0.132 & 2.650 & 3.810 & 4.810 & \\
		Bias (x0.01) &  &  &  & 0.277 & 0.210 & 0.144 & 0.008\\
		RMSE (x0.01) &  &  &  & 0.342 & 0.264 & 0.189 & 0.071\\
		SD (x0.01) &  &  &  & 0.011 & 0.010 & 0.014 & \\\hline
	\end{tabular}
\end{table}

Table~\ref{tab:res_bernoulli_gamma80_T21_exp_n} shows the values of the SBGIA and the Calibration method for $K=2$ and $n(\bi,2) = 1,3,5$, i.e., only the nested number of simulations for the SBGIA is increased. The CPU time, bias, RMSE, and standard deviations over two independent simulation runs are shown at the bottom of Table~\ref{tab:res_bernoulli_gamma80_T21_exp_n}.

%  \textcolor{red}{[Bij nader inzien, weten we niet zoveel over gedrag van de schatter in $n(\bi,k)$ is het valide om aan te nemen dat we op bovengrens zitten? ]} The rightmost column shows an estimator of the limit, this time when $n(\bi,2)\rightarrow\infty$. 
% From \cite[Lemma 17]{chen2018beating}, it follows that we obtain a bound of $\epsilon_\text{osa}$ on the optimal stopping approximation error when $n(\bi,2)$ is of order $\log(1/\epsilon_\text{sa})/\epsilon_\text{osa}^2$. 
% The rightmost column of Table~\ref{tab:res_bernoulli_gamma80_T21_exp_n} shows an estimate of the limit found by performing ordinary least squares on the columns $n(\bi,k)=1,3,5$ with the line \mbox{$\nu(\bs)=a + b\exp(-W(2n(\bi,k))/2)$}, with $W$ the Lambert-$W$ function, which follows from the inverse of the relationship between $\epsilon_\text{osa}$ and $n(\bi,k)$.
Table~\ref{tab:res_bernoulli_gamma80_T21_exp_n} shows that when increasing $n(\bi,k)$ a smaller error (expressed in bias and RMSE) is attained for a lower computational cost in comparison to Table~\ref{tab:res_bernoulli_gamma80_T21_exp_k}. 
The results in Table~\ref{tab:res_bernoulli_gamma80_T21_exp_n} hence agree with the proposal in Remark \ref{rem:behavior_large_n}, as increasing the number of nested simulations leads to higher precision in less computation time.

\begin{table}[h!]
	\centering
	\caption{Gittins index values found under the SBGIA for $n:= n(\bi,2) = 1,3,5$ and  the Calibration method for the Bernoulli bandit.  We set $K=2$, $\gamma = 0.8, N = 35$, and set a tolerance of $\epsilon_\nu=0.001$ in the stopping criterion \eqref{stoppingcriterium}  for determining the SBGIA. The computation (CPU) time denotes the time it took to calculate all values in the column and is measured in seconds. The bias, RMSE, and standard deviation (SD) are multiplied by $100$, i.e., displayed in percentage points. \label{tab:res_bernoulli_gamma80_T21_exp_n}}
	\begin{tabular}{lcccccr}\hline
		General & $ \Psi$ & $ \kappa - \Psi$ & Calibration & $ n=1$ & $ n=3$ & $ n=5$\\\hline
		& 1 & 1 & 0.641 & 0.642 & 0.642 & 0.642\\
		& 1 & 2 & 0.443 & 0.446 & 0.445 & 0.444\\
		& 1 & 3 & 0.332 & 0.337 & 0.334 & 0.333\\
		& 1 & 4 & 0.263 & 0.268 & 0.266 & 0.265\\
		& 1 & 5 & 0.216 & 0.222 & 0.220 & 0.219\\
		& 1 & 6 & 0.183 & 0.189 & 0.187 & 0.186\\
		& 2 & 1 & 0.760 & 0.760 & 0.760 & 0.760\\
		& 2 & 2 & 0.590 & 0.591 & 0.591 & 0.590\\
		& 2 & 3 & 0.476 & 0.479 & 0.478 & 0.477\\
		& 2 & 4 & 0.398 & 0.401 & 0.399 & 0.399\\
		& 2 & 5 & 0.340 & 0.344 & 0.342 & 0.341\\
		& 2 & 6 & 0.296 & 0.300 & 0.299 & 0.298\\
		& 3 & 1 & 0.816 & 0.816 & 0.816 & 0.816\\
		& 3 & 2 & 0.671 & 0.673 & 0.672 & 0.672\\
		& 3 & 3 & 0.566 & 0.568 & 0.567 & 0.566\\
		& 3 & 4 & 0.487 & 0.489 & 0.489 & 0.488\\
		& 3 & 5 & 0.427 & 0.429 & 0.428 & 0.428\\
		& 3 & 6 & 0.379 & 0.382 & 0.380 & 0.380\\
		& 4 & 1 & 0.849 & 0.849 & 0.849 & 0.849\\
		& 4 & 2 & 0.725 & 0.725 & 0.725 & 0.725\\
		& 4 & 3 & 0.628 & 0.629 & 0.629 & 0.628\\
		& 4 & 4 & 0.552 & 0.554 & 0.553 & 0.553\\
		& 4 & 5 & 0.491 & 0.494 & 0.493 & 0.493\\
		& 4 & 6 & 0.443 & 0.445 & 0.444 & 0.444\\
		& 5 & 1 & 0.872 & 0.872 & 0.872 & 0.872\\
		& 5 & 2 & 0.762 & 0.763 & 0.763 & 0.762\\
		& 5 & 3 & 0.674 & 0.675 & 0.674 & 0.674\\
		& 5 & 4 & 0.602 & 0.603 & 0.603 & 0.602\\
		& 5 & 5 & 0.543 & 0.545 & 0.544 & 0.544\\
		& 5 & 6 & 0.494 & 0.496 & 0.496 & 0.495\\
		& 6 & 1 & 0.888 & 0.888 & 0.888 & 0.888\\
		& 6 & 2 & 0.790 & 0.791 & 0.791 & 0.790\\
		& 6 & 3 & 0.709 & 0.710 & 0.709 & 0.709\\
		& 6 & 4 & 0.641 & 0.642 & 0.642 & 0.642\\
		& 6 & 5 & 0.585 & 0.586 & 0.585 & 0.585\\
		& 6 & 6 & 0.537 & 0.538 & 0.538 & 0.537\\\hline
		$\log_{10}$ CPUtime &  &  & 0.132 & 3.810 & 3.950 & 4.170\\
		Bias (x0.01) &  &  &  & 0.210 & 0.113 & 0.059\\
		RMSE (x0.01) &  &  &  & 0.264 & 0.145 & 0.085\\
		SD (x0.01) &  &  &  & 0.010 & 0.016 & 0.013\\\hline
	\end{tabular}
\end{table}

\FloatBarrier
\begin{remark}
	The length of the confidence interval for the Gittins index is in large part determined by the error bound for the optimal stopping value approximation given in~\citet{chen2018beating}. The contribution of the optimal stopping approximation is equal to  $2B(\delta,\xi)/c$~(Theorem~\ref{FTCI}), which can be seen as the bias of approximating $\nu_{\sigma}(\bs)$ by $\nu_M(\bs)$. Here $c$ is the slope of $Z_t(\nu)$ in $\nu$ and $B(\delta,\xi)$ is a bound for the error induced by the approximation method introduced in~\citet{chen2018beating}. 
	The bound is similar to the radius of the confidence interval found under the Delta method when applied to sampled approximations to the truncated Gittins index $f_\bs^{-1}(0)=\nu_\sigma(\bs)$, which would be proportional to $1/f_{\bs}'(0)$. This implies that the bound could be sharp, given that the error bound $B(\delta,\xi)$ is sharp.
	Note that rescaling $Z$ in \eqref{Z_unn} would not alter this radius, increasing the range of $Z$ linearly increases the error according to Theorem~\ref{theorem:approxthm}, yielding the same confidence radius. 
	The results in Section~\ref{sec:52} indicate that the bound can be made tighter. For instance, for $\Psi=1$, $\kappa=2$, and $K=2$, an absolute bias of \hbox{$0.642 - 0.641 = 0.001$} is seen in Table~\ref{tab:res_bernoulli_gamma80_T21_exp_k}. We have $R_u-R_\ell=36/37-1/37=0.946$, \hbox{$c = (1-\gamma)/(2(R_u-R_\ell)(1-\gamma^N))= 0.106$}, hence to get the theoretical bound for the bias less than~0.001, we should at least have \hbox{$B(\delta,\xi)=\xi+\delta\leq 0.106\cdot 0.001/2=5.29\cdot 10^{-5}$.} Setting~$\xi=\delta=5.29\cdot 10^{-5}/2$, following \citet{chen2018beating}, we would need $K=\floor{1/\xi} =378\cdot 10^2$ and $n(\bi,k) = \ceil{\log(2/\delta)/(2\xi^2)} = 8.04\cdot 10^9$ to obtain a bias of $0.001$ for $\nu_\sigma(\bs).$
	%  In practice, it was hence seen that the computational burden was too high to get the theoretical confidence radius to a level acceptable such that it could be verified in the simulation studies of Section \ref{sect:numerical_results}.
	The main limiting factor in applying the theoretical bound in practice is hence the bound from~\citet{chen2018beating} which could possibly be made more tight.
\end{remark}
\FloatBarrier

\subsubsection{Gaussian bandit \label{sec:results_Gaussian}}
Let $O^a_{t}$ be normally  distributed with unknown mean $\theta_a$, and known variance for each arm $a$. By scaling the mean and outcomes  by the (known) standard deviation, we can equivalently assume $O^a_{t}\sim \mathcal{N}(\theta_a,1)$, which implies that the family of outcome distributions is an exponential family with, letting $\phi$ denote the standard normal density,
\begin{align}
	\zeta&\equiv \phi,\;\eta(\theta_a) = \theta_a,\; \bpsi(O)=O ,\; \rho(\theta_a)=\theta_a^2/2,\label{eqn:specificationexpfam_gauss}
\end{align}
and $\mu$ the Lebesgue measure. 
We assume a $\mathcal{N}(\mu_0,v_a)$ prior on $\theta_a$, hence \mbox{$\kappa^a_t = 1/v_a + t$}, \mbox{$\Psi^a_t = \mu_0/v_a + \sum_{u=1}^tO_u^a$.}
In the following, we drop the superscript $a$ from notation as we consider results for a single arm $a$ only.

Observe from \eqref{eqn:specificationexpfam_gauss} that $O\mid (\Psi_t, \kappa_t)\sim \mathcal{N}(\Psi_t/\kappa_t, \,1+ 1/\kappa_t)$, hence $$R(\Psi_t,\kappa_t) = \mathbb{E}[O\mid \Psi_t,\kappa_t]=\Psi_t/\kappa_t,$$ and
\begin{equation}
	R(\Psi_{t+1},\kappa_{t+1})\mid (R(\Psi_t,\kappa_t),\kappa_t)  \sim \mathcal{N}\left(R(\Psi_t,\kappa_t),\; 1/(\kappa_t\kappa_{t+1})\right).\label{MCdef}
	% (\Psi_t,\kappa_t) \rightarrow  (\Psi_{t+1},\kappa_t + 1),\;\;\;\; \Psi_{t+1} \sim \mathcal{N}\left(\Psi_{t}\cdot (1+1/\kappa_t),\; 1+1/\kappa_t\right)\label{MCdef}
\end{equation}
% \begin{align*}
	%     \xi_{t+1} &= \frac{\mu_0/\sigma^2_0 + (t+1)\bar{Y}_t^a/\sigma^2_a}{1/\sigma^2_0 + (t+1)/\sigma^2_a} = \frac{\xi_t(1/\sigma^2_0 + t/\sigma^2_a)+Y_{t+1}^a/\sigma^2_a}{1/\sigma^2_0 + (t+1)/\sigma^2_a}\\&\sim \mathcal{N}\left(\xi_t,  \frac{\sigma^2_a + \sigma^2_0(t+1)}{\sigma^2_a(\sigma^2_a + \sigma^2_0t)(1/\sigma^2_0 + (t+1)/\sigma^2_a)^2}\right)\\&\sim \mathcal{N}\left(\xi_t,  \frac{1}{(\sigma^2_a/\sigma_0^2 + t)(1/\sigma^2_0 + (t+1)/\sigma^2_a)}\right)
	% \end{align*}
% the posterior mean reward for the trial in the full state $(Y_t)_{t=1}^n$ equals $\xi_{t}$, hence we can consider the Markov chain $ (\xi_{t}, t)_t$ in order to calculate the Gittins index. 
From \citet{yao2006some} it holds that
% highlighting the dependence of the Gittins index on the known variance $\sigma^2_a,$
$$ \nu(\Psi_t,\kappa_t) = R(\Psi_t,\kappa_t) +  \nu(0, \kappa_t).$$
It is hence sufficient to calculate the Gittins index for Gaussian rewards given that the initial sufficient statistic is zero.
Observe from \eqref{MCdef} that, starting from the initial state $(0,\kappa_t)$, the process $(R(\Psi_{t}, \kappa_{t+u}))_u$ is a Gaussian random walk starting at zero with normally distributed, zero-centered increments with \mbox{variance $1/((\kappa_t + u-1)(\kappa_t + u))$.}

Let the stopping time $\sigma$ be defined as in \eqref{def:stoppingtime} for fixed $N\in\mathbb{N}$, and let $R_\ell=-L$, $R_u=L$ for \mbox{fixed $L>0.$} We then have by Kolmogorov's inequality  \begin{align*}&\mathbb{P}_{(0,\kappa_t)}(\sigma < N)\leq  \frac{\mathbb{E}[R(\Psi_{N-1},\kappa_{t+N-1})^2]}{L^2} =\frac{1}{L^2}\sum_{u=1}^{N-1}1/(\kappa_{t+u-1}\kappa_{t+u})\leq \frac{1}{L^2\kappa_t}.\end{align*}
We hence have $\mathbb{E}_{(0,\kappa_t)}[\gamma^{\sigma}] \leq \gamma^N + \frac{1}{L^2\kappa_t}$.
Note that $$C(0,\kappa_t)= \sum_{u=0}^\infty\gamma^t \mathbb{E}_{(0,\kappa_t)}|R(\Psi_{u},\kappa_{t+u})| = \sum_{u=1}^\infty\gamma^u \sqrt{\frac{2}{\pi}\left(\frac{1}{\kappa_t } - \frac{1}{\kappa_{t+u}}\right)}\leq \frac{\sqrt{2/\kappa_t }}{1-\gamma}. $$
From Theorem~\ref{boundtrunc}, and as $\nu(\bS_\sigma)^+\leq (1-\gamma)\mathbb{E}[C(0,\kappa_\sigma)]\leq \sqrt{2}$, it follows that 
\begin{equation}
	\nu(0,\kappa_t) - \nu_{\sigma}(0,\kappa_t) \leq \frac{\sqrt{2}\,\mathbb{E}_{(0,\kappa_t)}[\gamma^{\sigma}]}{1-\mathbb{E}_{(0,\kappa_t)}[\gamma^{\sigma}]},\label{truncbound_Gaussian}
\end{equation}
The truncation error in \eqref{truncbound_Gaussian} is smaller than $\epsilon_\text{trunc}>0$ when, e.g.,
\begin{equation}N \geq \log_\gamma\left(\epsilon_\text{trunc}/(2(\sqrt{2}+\epsilon_\text{trunc}))\right)\text{ and }L\geq \sqrt{2(\sqrt{2} + \epsilon_\text{trunc})/(\kappa_t\epsilon_\text{trunc})}.\label{choice_L_N}\end{equation}
Values of $R_u = -R_\ell=L$ and $N$ such that the above inequalities are satisfied can then be used in~\eqref{Z_unn}.

Table~\ref{tab:res_gaussian_gamma80_T21}  shows the Gittins index values found  under the SBGIA  and the Calibration method for the Gaussian bandit with $\gamma=0.8$.
For the SBGIA, we set $N=39$, corresponding to a truncation error bound $\epsilon_\text{trunc}=0.0005$ according to \eqref{truncbound_Gaussian}. For all states $(0,\kappa_t)$ the value of $L$ was set to the lower bound in \eqref{choice_L_N}.
We next set each $n(\bi,k)=1$ and considered $K=1,2,3$.
The Gittins indices found under the Calibration method shown in Table~\ref{tab:res_gaussian_gamma80_T21} can also be derived from \hbox{\citet[Table 8.1]{gittins2011multi}.}
As we only consider $\Psi=0$, each state of the Gaussian bandit in Table~\ref{tab:res_gaussian_gamma80_T21} is denoted by~$\kappa$.
We show the log computation time at the bottom, as well as the  bias, RMSE, and standard deviation in percentage points. The rightmost column of the table shows an estimate of the limit as $K\rightarrow\infty$, found by performing ordinary least squares on the columns $K=1,2,3$ with the line \mbox{$\nu(\bs)=a + b/(K+1)$}.

Table~\ref{tab:res_gaussian_gamma80_T21} shows that, as in Table~\ref{tab:res_bernoulli_gamma80_T21_exp_k}, the Gittins index is overestimated for $K=1$, and the values for the SBGIA, as well as the error measures, decrease in~$K$. The estimates for the Gaussian bandit show larger errors than those for the Bernoulli bandit. Possibly due to the continuity in the support of the rewards, which induces a larger variance in the sampled paths. The computation times for the Gaussian bandit are also approximately ten times larger than those for the Bernoulli bandit.
The computation time shown in Table~\ref{tab:res_gaussian_gamma80_T21} is similar for $K=1,2$, and  increases tenfold when going from $K=2$ to $K=3$. The computation time of the Calibration method is comparable to the computation time for $K=1$, indicating that the Gaussian bandit with known variance is already a hard problem to solve under the Calibration method.
The low average standard deviation in Table~\ref{tab:res_gaussian_gamma80_T21} indicates that the estimates are consistent over different runs. The estimates of the limits in the rightmost column again show a better quality than those for finite $K$, often giving the value of the Gittins index with an error of 0.001 for $\kappa\geq 4.$

Table~\ref{tab:res_gaussian_gamma80_T21_varyingn} shows values of the SBGIA obtained when setting $K=2$ and varying $n(\bi,2)$ for the Gaussian bandit with unit variance. As in Table 
\ref{tab:res_bernoulli_gamma80_T21_exp_n}, it is seen that the errors decrease faster in $n(\bi,k)$ for $K$ fixed than vice versa, in agreement with the proposal in Remark \ref{rem:behavior_large_n}.

%  The estimated limits often underestimate the Gittins index, again indicating that the error bound given in~\cite{chen2018beating} might be decreasing too slowly in the number of simulations. Contrary to those in Table~\ref{tab:res_bernoulli_gamma80_T21_exp_n}, the estimated limits are seen to result in better Gittins index estimates in terms of absolute bias and RMSE than those for finite $n(\bi,k)$. 

\begin{table}[h!]
	\centering
	\caption{Gittins index values found under the SBGIA for $K=1,2,3$, the Calibration method, and an estimate for the limit as $K\rightarrow\infty$  for the Gaussian bandit (unit variance). For columns $K=1,\dots, 3$ we set each $n(\bi,k)= 1$ $\gamma = 0.8,\, N = 39$, and set a tolerance of $\epsilon_\nu=0.001$ in the stopping criterion \eqref{stoppingcriterium}  for determining the SBGIA.
		The computation (CPU) time denotes the time it took to calculate all values in the column and is measured in seconds. 
		The bias, RMSE, and standard deviation (SD) are multiplied by $100$, i.e., displayed in percentage points.
		\label{tab:res_gaussian_gamma80_T21}}
	\begin{tabular}{lcccccc}\hline
		General & $\kappa$ & Calibration & $ K=1$ & $ K=2$ & $ K=3$ & Est. limit\\\hline
		& 1 & 0.505 & 0.526 & 0.520 & 0.520 & 0.513\\
		& 2 & 0.308 & 0.329 & 0.323 & 0.320 & 0.312\\
		& 3 & 0.226 & 0.245 & 0.239 & 0.237 & 0.229\\
		& 4 & 0.179 & 0.196 & 0.191 & 0.188 & 0.180\\
		& 5 & 0.149 & 0.164 & 0.160 & 0.157 & 0.150\\
		& 6 & 0.128 & 0.142 & 0.138 & 0.135 & 0.128\\
		& 7 & 0.112 & 0.125 & 0.121 & 0.119 & 0.113\\
		& 8 & 0.100 & 0.112 & 0.108 & 0.106 & 0.101\\
		& 9 & 0.090 & 0.101 & 0.098 & 0.096 & 0.091\\
		& 10 & 0.082 & 0.092 & 0.089 & 0.087 & 0.083\\
		& 20 & 0.043 & 0.050 & 0.048 & 0.047 & 0.044\\
		& 30 & 0.029 & 0.034 & 0.033 & 0.032 & 0.030\\
		& 40 & 0.022 & 0.026 & 0.025 & 0.025 & 0.023\\
		& 50 & 0.018 & 0.021 & 0.020 & 0.020 & 0.018\\\hline
		$\log_{10}$ CPUtime &  & 4.050 & 3.860 & 4.320 & 5.730 & \\
		Bias (x0.01) &  &  & 1.250 & 0.871 & 0.725 & 0.172\\
		RMSE (x0.01) &  &  & 1.380 & 0.970 & 0.818 & 0.263\\
		SD (x0.01) &  &  & 0.030 & 0.031 & 0.059 & \\\hline
	\end{tabular}
\end{table}

\begin{table}[h!]
	\centering
	\caption{Gittins index values found under the SBGIA for $n:= n(\bi,2) = 1,3,5$ and the Calibration method
		%   , as well as an estimate of the limit when $n(\bi,2)\rightarrow\infty$
		for the Gaussian bandit (unit variance). We set $K=2$, $\gamma = 0.8, N = 39$, and set a tolerance of $\epsilon_\nu=0.001$ in the stopping criterion \eqref{stoppingcriterium}  for determining the SBGIA. The computation (CPU) time denotes the time it took to calculate all values in the column and is measured in seconds. The bias, RMSE, and standard deviation (SD) are multiplied by $100$, i.e., displayed in percentage points.\label{tab:res_gaussian_gamma80_T21_varyingn}}
	\begin{tabular}{lccccc}\hline
		General & $ \kappa $ & Calibration & $ n=1$ & $ n=3$ & $ n=5$\\\hline
		& 1 & 0.505 & 0.520 & 0.514 & 0.512\\
		& 2 & 0.308 & 0.323 & 0.317 & 0.316\\
		& 3 & 0.226 & 0.239 & 0.235 & 0.232\\
		& 4 & 0.179 & 0.191 & 0.188 & 0.185\\
		& 5 & 0.149 & 0.160 & 0.157 & 0.155\\
		& 6 & 0.128 & 0.138 & 0.135 & 0.133\\
		& 7 & 0.112 & 0.121 & 0.119 & 0.116\\
		& 8 & 0.100 & 0.108 & 0.107 & 0.104\\
		& 9 & 0.090 & 0.098 & 0.096 & 0.094\\
		& 10 & 0.082 & 0.089 & 0.088 & 0.085\\
		& 20 & 0.043 & 0.048 & 0.047 & 0.046\\
		& 30 & 0.029 & 0.033 & 0.032 & 0.031\\
		& 40 & 0.022 & 0.025 & 0.025 & 0.024\\
		& 50 & 0.018 & 0.020 & 0.020 & 0.019\\\hline
		$\log_{10}$ CPUtime &  & 4.050 & 4.320 & 4.710 & 4.880\\
		Bias (x0.01) &  &  & 0.871 & 0.644 & 0.437\\
		RMSE (x0.01) &  &  & 0.970 & 0.698 & 0.481\\
		SD (x0.01) &  &  & 0.031 & 0.015 & 0.010\\\hline
	\end{tabular}
\end{table}

\FloatBarrier

\subsection{ Gaussian random effects bandit \label{Subsec:Randomeffects}}
This section compares the performance of a policy based on the SBGIA to that of policies Thompson sampling and Bayes-UCB in case each arm describes the posterior under a Gaussian random effects model. This multi-armed bandit model was not found in literature. 

% The  total discounted rewards found under the policies are compared using a simulation study.
% We note that the policies Bayes-UCB and Thompson sampling, unlike the SBGIA, are not tuned to a specific discount factor~$\gamma$. Other Bayesian bandit policies tuned to a specific discount factor are not known from literature, and introducing them in the current paper would deviate attention from the SBGIA policy. In order to have a fair comparison, we compare the performance of the three policies when different discount factors $\tilde{\gamma}$ are used to determine the total discounted reward.
% An outperformance in terms of Bayesian total discounted reward (with discount factor $\tilde{\gamma} = \gamma$) is expected for the Gittins index policy, as it exactly maximizes this quantity. 
% The SBGIA is however an approximation to the Gittins index, and hence outperformance for policies based on the SBGIA in terms of the total discounted reward is not a given, furthermore there is no guarantee that a policy based on the Gittins index tuned to $\gamma=0.8$ also outperforms other policies for other discount factors $\tilde{\gamma}$. Hence, it is interesting to compare the performance of the policy using a simulation study.
% }

In the Gaussian random effects bandit model, it is assumed that there an additional factor that induces heterogeneity within each of the $A$ distributions of choice. The  factor induces multiple clusters to which the outcomes are assigned. 
Outcomes assigned to the same cluster have the same expected value, which deviates from the overall expected value for the distribution. As each deviation is induced by a common factor, the deviations are sampled from a common distribution.
The assumed model is (hence) an independent mixed effects model (intercept and random effects) for the outcomes under each of the $A$ distributions. 

For $d\in\mathbb{N},$ let $\bC_{a,t}\in\{0,1\}^d$ be a vector denoting cluster assignment for outcome $O_t^a$ such that $\sum_{i=1}^d~C_{a,t,i}~=~1$. 
We assume for all $t$ that
\begin{equation}
O_t^a =  \theta_{a} + \bC_{a,t}^\top\bu_{a} + \epsilon_{a,t},\label{Eqn:Ymodel}
\end{equation}
where, independently,
$$\epsilon_{a,t}\sim \mathcal{N}(0,v^{(1)}_{a})\text{ and } u_{a,i}\sim \mathcal{N}( 0, v^{(2)}_{a}).$$
The set of model parameters for each arm $a$ hence consists of $(\theta_a, \bu_a,v^{(1)}_{a},v^{(2)}_{a})$, and no parameters are shared between the arms.
The process of cluster assignment $(\bC_{a,t})_t$ is assumed predictable, i.e., all cluster assignments are known prior to assignment to the arm. 
The prior specification is as follows, we assume a normal $\mathcal{N}(\theta_0,\sigma_0^2)$ prior on each $\theta_a$, an inverse-gamma $\mathcal{IG}(\alpha_0,\beta_0)$ prior on each $v^{(1)}_{a}$,  and an $\mathcal{IG}(\alpha_1,\beta_1)$ prior on each $v^{(2)}_{a}$.

The above data model and prior specification lead to an analytic expression for the full conditional distribution of each parameter, and hence an efficient Gibbs sampling procedure such as the one in~\citet{wang1993marginal} can be constructed. 
This Gibbs sampling procedure can then be included in a sequential Markov chain Monte Carlo method (\citet{chopin2002sequential}) in order to efficiently update approximations of the posterior distribution $\Pi_t^a$ upon sampling a new observation $O_{t+1}^a$.
For the sequential Markov chain Monte Carlo method  the set of observations $(O^a_t)_t$ leads to a collection of samples $(\theta_{a,i,t}, \bu_{a,i,t}, v^{(1)}_{a,i,t}, v^{(2)}_{a,i,t})_{i=1}^{d_2}$ of particles and weights $(w_{a,i,t})_{i=1}^{d_2}$ such that $\sum_{i=1}^{d_2}w_{a,i,t}=1$~and, denoting with $\delta_x$ the Dirac measure at $x$, \begin{equation}
\hat{\Pi}_t^a=\sum_{i=1}^{d_2} w_{a,i,t}\; \delta_{(\theta_{a,i,t},\bu_{a,i,t}, v^{(1)}_{a,i,t}, v^{(2)}_{a,i,t} )}\approx \Pi_t^a.\label{eqn:approx_posterior}
\end{equation}

Based on this approximation to the posterior, we consider three policies for the Bayesian \mbox{multi-armed} bandit:
\begin{itemize}
\item {\bf SBGIAP}: 
Determine the SBGIA by sampling future approximations $\hat{\Pi}^a_t$ to the posterior ${\Pi}^a_t$ from the Markov chain  that approximates \eqref{transkern:posterior} with transition kernel
$$\mathbb{P}_{\hat{\Pi}^a_t}(E) = \int_{\mathcal{O}} \mathbb{Q}(\hat{\Pi}^a_{t+1}(\cdot\mid O,\hat{\Pi}_t^a)\in E) p(O\mid\hat{\Pi}_t^a)\mu(dO) \;\;\;\; \forall E\in\mathcal{M},$$
where $\mathbb{Q}$ denotes the measure on approximate posteriors induced by a sequential Monte Carlo step using $d_2$ particles, given the
current  approximation to the posterior $\hat{\Pi}_t^a$  and the sampled outcome $O$.
The reward function for the Markov chain is given by the posterior mean under the empirical distribution $$R(\hat{\Pi}_t^a) = \sum_{i=1}^{d_2} w_{a,i,t}( \theta_{a,i,t} + \bC^\top_{a,t}\bu_{a,i,t}).$$
As in Theorem~\ref{Thm:epsilonoptimal}, the SBGIAP now chooses $A_t = \underset{a\in [A]}{\arg\max}\;\nu_{M}(\hat{\Pi}_t^a)$, where $\nu_m$ is determined as the  $M$-th iterate of \eqref{stoch_approx} for a choice of $K,N,n,M$. To decrease the numerical burden, the approximated posteriors $\hat{\Pi}_{t+1}^a,\dots,\hat{\Pi}_{t+N}^a $ in the SBGIAP are based on $d_3\leq d_2$ samples after sampling the first observation $O$, by sampling $d_3$ particles to continue with from the initial distribution~$\hat{\Pi}_t^a$. 
\item {\bf Thompson sampling (\citet{thompson1933likelihood})}: \\
Sample $i\sim \text{Categorical}_{d_2}(\bw_{a,t})$ and set \hbox{$\eta_{a,t} = \theta_{a,i,t} + \bC_{a,t+1}^\top\bu_{a,i,t}.$} Choose $A_t = \underset{a\in [A]}{\arg\max}\;\eta_{a,t}$.
\item {\bf Bayesian upper confidence bound (Bayes-UCB) (\citet{kaufmann2012bayesian})}: 
Set $$\eta_{a,t} = \hat{q}((\theta_{a,i,t} + \bC_{a,t+1}^\top\bu_{a,i,t}, w_{a,i,t})_{i=1}^{d_2}, 1-(t\log(T)^6)^{-1}),$$ where $\hat{q}((\rho_i,w_i)_i,1-\alpha)$ is the empirical $1-\alpha$ quantile given the samples $\rho_i$ and weights $w_i$, and where $T$ is the total sample size of the experiment. Choose $A_t = \underset{a\in [A]}{\arg\max}\;\eta_{a,t}$.
\end{itemize}
The  total discounted rewards found under the policies are compared using a simulation study.
We note that the policies Bayes-UCB and Thompson sampling, unlike the SBGIAP, are not tuned to a specific discount factor~$\gamma$. Other Bayesian bandit policies tuned to a specific discount factor are not known from literature, and introducing them in the current paper would deviate attention from the SBGIAP. Policies Bayes-UCB and Thompson sampling have good performance guarantees for undiscounted reward (\citet{kaufmann2012thompson, kaufmann2012bayesian}), hence in order to have a fair comparison, we compare the performance of the three policies when higher discount factors $\tilde{\gamma} \in \{0.8, 0.9, 0.99\}$ are used to determine the total discounted reward, while we tune the SBGIAP to $\gamma=0.8$.
An outperformance over Bayes-UCB and Thompson sampling in terms of Bayesian total discounted reward (with discount factor $\tilde{\gamma} = \gamma$) is expected for the Gittins index policy, as it exactly maximizes this quantity. 
The SBGIA is however an approximation to the Gittins index, and hence outperformance for the SBGIAP in terms of the Bayesian total discounted reward is not guaranteed, furthermore there is no guarantee that a policy based on the Gittins index tuned to $\gamma=0.8$ also outperforms other policies for other discount factors $\tilde{\gamma}$. Hence, it is interesting to compare the performance of the policy using a simulation study.

For the simulation study, we set $\theta_0=0$, $\sigma^2_0 = 1$, $\alpha_{0} = 13$, $\beta_0 = 12$, $\alpha_{1}= 6$, $\beta_{1}=10$.
In order to approximate the Bayesian total discounted reward, the parameters were sampled from the resulting prior distributions for each simulation. 
The sample size of the simulation study was set to $T=300$, the number of clusters $d$ was set to 3, and the number of arms $A$ was set to $3$.
The cluster assignments $\bC_a$ were sampled uniformly for each arm $a$. Given the sampled parameters and cluster assignments, the vector of observations $\bO^a\in\mathbb{R}^{300}$ were sampled according to model \eqref{Eqn:Ymodel}. Given the data and cluster assignments, a sequence of weights $(\bw_{a,t})_{t=1}^{300}$ ($d_2=100$) and particles $( \btheta_{a,t}, \bsigma^2_{a,t}, \btau_{a,t},\bu_{a,t})_{t=1}^{300}$ was generated  for each arm using sequential Markov chain Monte Carlo sampling (starting with a sample from the prior) using 5 Gibbs sampler iterations in each Markov chain Monte Carlo step. 
Each algorithm then determined an interleaving of these independent Markov chain samples, where for the SBGIAP, we set \hbox{$K=1$}, \hbox{$N=25$}, each \hbox{$n(\bi,k) = 1$}, \hbox{$M=100$}, and \hbox{$d_3 = 3$.} 
% Note that all these values should in principle be set to infinity to exactly determine the Gittins index, so the resulting Gittins index estimate with the chosen parameter values, but in the end it is the ordering of the Gittins indices that matters for the Bayesian total discounted reward.
The above procedure, resulting in an interleaving of the sampled Markov chain $(\bw_{a,t}, \btheta_{a,t},\bu_{a,t}, \bsigma^2_{a,t}, \btau_{a,t})_{t=1}^{300}$ for each algorithm described above, is then repeated independently 2500 times to approximate the Bayesian total discounted reward. 
This procedure took about 10 days on a computer with 32 cores.
\begin{figure}[h!]
\centering
\includegraphics[width = \textwidth]{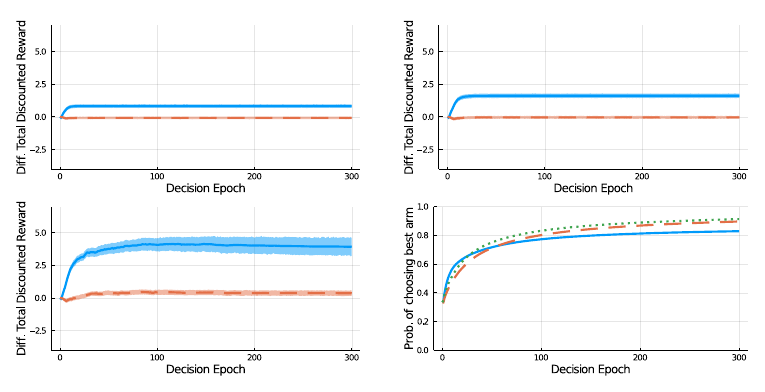}
\caption{Results for  the SBGIAP for the Gaussian random effects Bayesian multi-armed bandit model. The difference in running total discounted reward for a discount factor of $0.8$ (top-left), $0.9$ (top-right), and $0.99$ (bottom-left) is shown for the SBGIAP vs. Bayes-UCB (solid) and Thompson sampling vs. Bayes-UCB (dashed). The running frequency of choosing the best arm for the SBGIAP (solid), Thompson sampling (dashed) and Bayes-UCB (dotted) is shown on the bottom-right.
	\label{comparison_80perc}}

\end{figure}

Figure~\ref{comparison_80perc} shows results for the SBGIAP with $\gamma=0.8$. 
Differences (averaged over 2500 simulations) between Bayesian total discounted reward for the SBGIAP vs. Bayes-UCB  (solid, blue)  and Thompson sampling vs. Bayes-UCB (dotted, red) are shown for discount factors \hbox{$\tilde{\gamma} = 0.8,0.9$, and $0.99$,} along with a point-wise $95\%$ bootstrapped confidence interval for the mean. The total discounted rewards at each decision epoch are calculated as the discounted sum of the expected rewards ($\mathbb{E}[O_t] = \theta_{A_t} + C_{A_t,t}\bu_a$) from the initial decision epoch up to that time point. 

Figure~\ref{comparison_80perc} shows that the SBGIAP significantly outperforms Bayes-UCB and Thompson sampling with a final difference in average total discounted reward of about 1, 2 and 4 for discount factors $0.8,0.9$, and $0.99$ respectively.

% It is seen that for shorter horizons SBGIA significantly outperforms the other two algorithms in terms of total reward (Fig. \ref{comparison_80perc} bottom left). 
% For longer horizons the average difference decreases and the variance in the estimated mean increases, indicating that the increased average performance comes at the cost of higher variance in total reward.

The average (undiscounted) frequency of choosing the best arm is calculated as the frequency each algorithm chose $a_t^*=\underset{a}{\arg\max}\;\theta_a + \bC_{at}^\top\bu_a$ at each decision epoch.
It is seen that while the SBGIAP reaches a frequency of $60\%$ of optimal pulls early on, the other strategies end up at a higher frequency of optimal pulls. 
This might be a result of the low discount factor used for constructing the policy, indicating that short-term gains are preferred over long-term ones. The bottom-right graph is in line with the other graphs in Figure~\ref{comparison_80perc} as, when considering the sum of discounted rewards starting at the initial state, making good choices early on has a larger benefit than outperformance in the long run.

We conclude that with the SBGIAP, significant outperformance in terms of total discounted reward with respect to~state-of-the-art policies can be attained for more complex models than usually considered in bandit literature.

\section{Discussion and conclusion}
In this paper, we have proposed the sampling-based Gittins index approximation (SBGIA).
In Section \ref{sect: theoretical_results}, the SBGIA was introduced, and a general error bound was shown for the Gittins index obtained when truncating the horizon and support for the rewards using a stopping time, which can be viewed as an extension of the results presented in~\citet{wang1997error}. Next, finite and asymptotic convergence results were shown for the SBGIA. 
Using the finite-time convergence result it is possible to obtain a confidence interval for the Gittins index which holds under a finite number of stochastic approximation samples.
Next, it was shown in Theorem~\ref{Thm:epsilonoptimal} that by making explicit choices for the width and safety level $\delta_2$ of this confidence interval, the policy choosing the largest Gittins index estimate is an $\epsilon$-optimal policy for the Bayesian multi-armed bandit problem. 
For both the Bernoulli and Gaussian bandits, the SBGIA was seen to yield a good approximation to the Gittins index, increasing in quality with the number of nested simulations and the truncation parameter $K$, and showing the best approximation when estimating the limit as $K$ goes to infinity. The results indicated that an efficient strategy for Gittins index approximation using SBGIA is to first increase the number of nested simulations, and then increase the truncation parameter $K$, until no significant differences in the estimate are seen for both steps. 
The SBGIA can be applied even in cases where the actual transition kernel is unknown, but where samples from an approximation to the transition kernel can be generated.
An example of this was seen in Section \ref{Subsec:Randomeffects}, where samples from the approximate posterior were generated using sequential Monte Carlo sampling. 
In this case, the SBGIAP was seen to outperform the state-of-the-art policies Bayes-UCB and Thompson sampling in terms of Bayesian total discounted reward. 

The SBGIA can be applied to any family of alternative bandit processes. For example, to compute the Gittins index approximations in Section~\ref{sec:52}, only three things must be altered for each bandit, namely the transition kernel, the reward function, and calculation of the bounds $R_\ell,R_u.$ 
In contrast, for the Calibration method, an additional requirement is that the reward support also has to be discretized and the change in state space leads to a reformulation of the backward induction step. The benefit of a method that works in general, is that there is more flexibility in the model choice when basing treatment allocation on the Gittins index, as there is no increased difficulty in implementing the calculation method when assuming a more elaborate model for the data. Another benefit is that less expert knowledge is necessary for Gittins index approximation. It might be an interesting idea to have a software library where practitioners only have to input functions that calculate, e.g., the posterior mean, after which the package calculates the SBGIA. It is furthermore useful to have a method that does not assume known transition probabilities, as in many real-life cases the posterior distribution cannot be calculated in closed form because of the high dimensionality of the model or when the assumed prior is nonconjugate.

In Section \ref{Subsec:Randomeffects}, we evaluated the performance of the SBGIA policy (SBGIAP) for a novel random effects bandit problem. The SBGIAP was defined based on an approximation of the Markov chain~$(\Pi_t^a)_t$ describing the evolution of the posterior distribution, based on sequential Markov chain Monte Carlo. In future research, it would be interesting to investigate how finite-time convergence results for Markov chains (e.g., \citet{rosenthal1995convergence}) can be used to construct finite-time error bounds for the SBGIA in these situations. 
In this paper, the SBGIA was evaluated for a number of Bayesian multi-armed bandit problems.
In the case of Gaussian outcomes, the Gittins index was shown to result in near-optimal frequentist undiscounted regret (\citet{lattimore2016regret}). If this result is shown to hold in general, the confidence interval presented in Theorem~\ref{FTCI} ensures that we have a method that can approximate, up to arbitrary precision, a near-optimal policy, in terms of undiscounted frequentist regret, for the multi-armed bandit problem.
% The Gittins index is the optimal policy for a family of alternative bandit processes and can hence be applied in situations of job scheduling in queues, search problems and advertisement.
% The Gittins index policy was also shown to provide a good heuristic policy for restless bandit problems \cite{whittle1988restless}.
% and the Gaussian bandit with known variance under undiscounted total reward \cite{lattimore2016regret}. 
% in~\cite{chen2020new} the approximation method introduced in~\cite{chen2018beating} is applied to high-dimensional online decision making problems with a finite horizon.

% Appendix here
% Options are (1) APPENDIX (with or without general title) or 
%             (2) APPENDICES (if it has more than one unrelated sections)
% Outcomment the appropriate case if necessary
%
% \begin{APPENDIX}{<Title of the Appendix>}
% \end{APPENDIX}
%
%   or 
%
\bibliographystyle{plainnat} % outcomment this and next line in Case 1
\bibliography{Baas_Boucherie_Braaksma_SBGIA.bib} % if more than one, comma separated

\appendix

\section{Proofs of theorems} \label{proofs}

% \section{Proofs } \label{firstappendix}\phantom{.}\\
\setcounter{theorem}{2}
\begin{theorem} \thmBoundtrunc{defGI_restricted2}
\end{theorem}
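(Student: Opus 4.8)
For the statement I would isolate three claims: the existence/uniqueness/root assertions (and attainment of the infimum), the lower bound $\nu(\bs)-\nu_\sigma(\bs)\ge 0$, and the quantitative upper bound; the last is the real work.

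The plan for the first claim is to exhibit $\nu_\sigma(\bs)$ as a genuine Gittins index. Let $\tilde{\bS}_t=\bS_{\ixt}$ be the path-valued (hence trivially Markov) process recording the history up to time $t$, and equip it with the killed reward $\tilde{R}(\tilde{\bS}_t)=R(\bS_t)\mathbbm{I}(t<\sigma)$. This is a legitimate bounded reward, since $\{\sigma>t\}\in\mathcal{F}_t$, so Assumption~\ref{boundedtotrew} holds for $\tilde{\bS}$. Writing out the prevailing-charge value and using that $\tilde{R}\equiv 0$ after $\sigma$, one checks that for every $\nu$ the unrestricted optimal stopping problem for $\tilde{\bS}$ agrees with the $\mathcal{T}_\sigma$-restricted problem for $\bS$ (stopping past $\sigma$ is never strictly profitable once the reward is killed). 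Hence $\nu_\sigma(\bs)$ is the Gittins index of $\tilde{\bS}$, and the standard theory in \citet{lattimore2020bandit} yields at once the uniqueness of $\nu_\sigma(\bs)$, its being the unique root of $f^\sigma_\bs$, and the attainment of the infimum at some $\tau_\sigma(\nu,\bs)\in\mathcal{T}_\sigma$.

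For the lower bound I would pass to the ratio formulation \eqref{defGI}: for a fixed family of stopping times the charge root equals the supremum of the reward-to-discount ratio over that family, so $\nu_\sigma(\bs)=\sup_{\tau\in\mathcal{T}_\sigma}(\cdots)$ and $\nu(\bs)=\sup_{\tau\in\mathcal{T}}(\cdots)$; since $\mathcal{T}_\sigma\subseteq\mathcal{T}$ this gives $\nu_\sigma(\bs)\le\nu(\bs)$ immediately. For the upper bound, write $V(\bs,\nu)=\sup_{\tau\in\mathcal{T}}\mathbb{E}_\bs[\sum_{t=0}^{\tau-1}\gamma^t(R(\bS_t)-\nu)]$ and $V_\sigma$ for its $\mathcal{T}_\sigma$-restricted analogue, so $\nu(\bs),\nu_\sigma(\bs)$ are their roots. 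Since $V(\bs,\cdot)$ is convex, decreasing, with slope of modulus at least $1$ (the discounted occupation $\mathbb{E}_\bs[\sum_{t=0}^{\tau-1}\gamma^t]\ge 1$ as $\tau\ge 1$), one gets $\nu(\bs)-\nu_\sigma(\bs)\le V(\bs,\nu_\sigma(\bs))$. Then, using $V_\sigma(\bs,\nu_\sigma(\bs))=0$, the elementary inequality $\sup_\tau\mathbb{E}[A_\tau]-\sup_\tau\mathbb{E}[B_\tau]\le\sup_\tau\mathbb{E}[A_\tau-B_\tau]$ with the coupling $\tau\mapsto\tau\wedge\sigma$, and the strong Markov property at $\sigma$, I would obtain $V(\bs,\nu_\sigma(\bs))=V(\bs,\nu_\sigma(\bs))-V_\sigma(\bs,\nu_\sigma(\bs))\le\mathbb{E}_\bs[\gamma^\sigma V(\bS_\sigma,\nu_\sigma(\bs))^+]$, the positive part arising because one may decline to continue past $\sigma$ on paths where continuation is unprofitable.

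The remaining step, which I expect to be the main obstacle, is to turn the continuation value $\mathbb{E}_\bs[\gamma^\sigma V(\bS_\sigma,\nu_\sigma(\bs))^+]$ into the announced bound, i.e.\ to establish a self-referential estimate of the form $\nu(\bs)-\nu_\sigma(\bs)\le \mathbb{E}_\bs[\gamma^\sigma\nu(\bS_\sigma)^+]+\mathbb{E}_\bs[\gamma^\sigma]\,(\nu(\bs)-\nu_\sigma(\bs))$, after which rearranging produces the factor $1/(1-\mathbb{E}_\bs[\gamma^\sigma])$. My plan is to unfold the continuation value by the same splitting-at-$\sigma$/strong-Markov device recursively over the successive truncation epochs $0=\sigma^{(0)}<\sigma^{(1)}<\cdots$: each epoch contributes, through the ratio bound $\mathbb{E}_{\bs'}[\sum\gamma^tR]\le\nu(\bs')\,\mathbb{E}_{\bs'}[\sum\gamma^t]$, a term of the form $\mathbb{E}_\bs[\gamma^{\sigma^{(i)}}\nu(\bS_{\sigma^{(i)}})^+]$, while the accumulated discount factors generate a geometric series governed by $\mathbb{E}_\bs[\gamma^\sigma]$. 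The delicate points are the positive-part bookkeeping (so that only $\nu(\bS_\sigma)^+$, not $\nu(\bS_\sigma)$, survives), the control of the within-epoch contribution \emph{without} an extraneous $1/(1-\gamma)$ factor (this is exactly where truncation at $\sigma$ rather than at $\infty$ matters, yielding $1-\mathbb{E}_\bs[\gamma^\sigma]$ instead of $1-\gamma$ in the denominator), and the convergence/integrability of the series, which follows from Assumption~\ref{boundedtotrew}. A clean sanity check is the deterministic case $\sigma=N$, where $\sum_{i\ge 1}\gamma^{iN}=\gamma^N/(1-\gamma^N)$ reproduces exactly the Bernoulli-bandit bound \eqref{bound_TE}.
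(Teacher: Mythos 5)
Your treatment of the first three claims matches the paper's: the paper also realises $\nu_\sigma(\bs)$ as the Gittins index of the path-valued chain $\tilde{\bS}_t=\bS_{\ixt}$ with killed reward $\tilde{R}(\tilde{\bS}_t)=R(\bS_t)\mathbb{I}(t<\sigma)$ and then invokes the standard theory from \citet{lattimore2020bandit}; the lower bound is likewise dismissed as trivial. The problem is the quantitative upper bound, where you have correctly located the main difficulty but your plan for it does not go through. Your recursive unfolding over epochs $\sigma^{(0)}<\sigma^{(1)}<\cdots$ would produce a series of the shape $\sum_{i\geq 1}\mathbb{E}_\bs\bigl[\gamma^{\sigma^{(i)}}\nu(\bS_{\sigma^{(i)}})^+\bigr]$, and this is not dominated by $\mathbb{E}_\bs[\gamma^\sigma\nu(\bS_\sigma)^+]/(1-\mathbb{E}_\bs[\gamma^\sigma])$ in general: the strong Markov property gives $\mathbb{E}_\bs[\gamma^{\sigma^{(i)}}]=\mathbb{E}_\bs\bigl[\gamma^{\sigma^{(i-1)}}\mathbb{E}_{\bS_{\sigma^{(i-1)}}}[\gamma^{\sigma}]\bigr]$, which is not $\mathbb{E}_\bs[\gamma^\sigma]^i$ unless the overshoot distribution is state-independent, and the indices $\nu(\bS_{\sigma^{(i)}})^+$ at later epochs need not be controlled by $\nu(\bS_\sigma)^+$ at the first one. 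The intermediate route through $V(\bS_\sigma,\nu_\sigma(\bs))^+$ has the same defect you anticipated: bounding $V(\bs',\nu)^+$ by $(\nu(\bs')-\nu)^+\,\mathbb{E}_{\bs'}[\sum_{t<\tau}\gamma^t]$ costs a factor $1/(1-\gamma)$ that you have no mechanism to remove. So the self-referential inequality you posit is not established, and the proposed recursion is not a repair.

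The paper avoids the recursion entirely by never leaving the ratio formulation. Writing $\nu(\bs)-\nu_\sigma(\bs)$ as a difference of two suprema of ratios \emph{with the same denominator} $\mathbb{E}_\bs[\sum_{t=0}^{\tau-1}\gamma^t]$ (the truncated index has the killed reward in the numerator but the untruncated discount in the denominator), it bounds the difference of suprema by the supremum of differences; the numerators telescope to $\mathbb{E}_\bs[\sum_{t=\sigma\wedge\tau}^{\tau-1}\gamma^tR(\bS_t)]$, so one may restrict to $\tau\geq\sigma$. The factor $1-\mathbb{E}_\bs[\gamma^\sigma]$ then comes from a \emph{single} lower bound on the common denominator, $\mathbb{E}_\bs[\sum_{t=0}^{\tau-1}\gamma^t]\geq\mathbb{E}_\bs[\sum_{t=0}^{\sigma-1}\gamma^t]=(1-\mathbb{E}_\bs[\gamma^\sigma])/(1-\gamma)$, not from summing a geometric series. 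The numerator is handled by one application of the strong Markov property at $\sigma$ together with $(1-\gamma)\,\mathbb{E}_{\bS_\sigma}[\sum_{t=0}^{\tau'-1}\gamma^tR(\bS_t)]\leq\bigl(\mathbb{E}_{\bS_\sigma}[\sum_{t=0}^{\tau'-1}\gamma^tR(\bS_t)]/\mathbb{E}_{\bS_\sigma}[\sum_{t=0}^{\tau'-1}\gamma^t]\bigr)^+\leq\nu(\bS_\sigma)^+$, the positive part being available because $\tau=\sigma$ is always admissible. If you want to salvage your charge-formulation approach, the missing ingredient is precisely this: keep the full pre-$\sigma$ discounted occupation in the denominator rather than trading it for the crude slope bound $\mathbb{E}_\bs[\sum_{t<\tau}\gamma^t]\geq 1$.
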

\begin{proof}
%  We have $$f^\sigma(\nu) %=-\inf_{\tau\in\mathcal{T}_\sigma}\mathbb{E}_\bs\left[\tilde{Z}_\tau(\nu)\right]
% = -\sup_{\tau\in\mathcal{T}_\sigma}\mathbb{E}_\bs\left[\sum_{t=0}^{\tau-1}\gamma^t(R(\bS_t)-\nu)\right].$$
Let $(\tilde{\bS}_t)_t$ be a Markov chain with states $\tilde{\bS}_t = (\bS_u)_{u=0}^t$. Let $\tilde{R}(\tilde{\bS}_t) = R(\tilde{\bS}_{t})\mathbb{I}(t< \sigma )$. The pair $(
\tilde{\bS},\tilde{R})$ defines a Markov reward process as $\mathbb{I}(t<\sigma )$ is a function of $\tilde{\bS}_t$.
As the filtrations and hence the set of stopping times generated by $\bS$ and $\tilde{\bS}$ are the same, we have \begin{equation}
	\sup_{\tau\in\mathcal{T}}\mathbb{E}_{\bs}\left[\sum_{t=0}^{\tau-1}\gamma^t (\tilde{R}(\tilde{\bS}_t)-\nu)\right]=\sup_{\tau\in\mathcal{T}}\mathbb{E}_{\bs}\left[\sum_{t=0}^{\tau\wedge \sigma-1}\gamma^t (R(\bS_t)-\nu)\right] = \sup_{\tau\in\mathcal{T}_\sigma}\mathbb{E}_{\bs}\left[\sum_{t=0}^{\tau-1}\gamma^t (R(\bS_t)-\nu)\right].\label{eqproofthm1}
\end{equation}
Note that as Assumption \ref{boundedtotrew} holds for the Markov reward process defined by $(\bS,R)$, it also holds for the Markov reward process  defined by $(
\tilde{\bS},\tilde{R})$. Hence from  \citet{lattimore2020bandit} there is an unique value for $\nu$, denoted by $\nu_\sigma(\bs)$, such that the left-hand side, hence the right-hand side of \eqref{eqproofthm1} is zero. As $f_\bs^\sigma$ is a scaling of \eqref{eqproofthm1}, the first two statements of the theorem are proven.  
From \citet{lattimore2020bandit}, we also have that the supremum in \eqref{eqproofthm1} is attained for an unique stopping time $\tau(\bs,\nu)$, proving the third statement. 
% Next
%  \begin{align*}
	%     \nu_\sigma(\bs) &= \sup_{\tau\in\mathcal{T}}\frac{\mathbb{E}_\bs[\sum_{t=0}^{\tau-1}\gamma^t\tilde{R}(\tilde{\bS}_t)]}{\mathbb{E}_\bs[\sum_{t=0}^{\tau-1}\gamma^t]}= \sup_{\tau\in\mathcal{T}}\frac{\mathbb{E}_\bs[\sum_{t=0}^{\tau\wedge \sigma-1}\gamma^tR(\bS_t)]}{\mathbb{E}_\bs[\sum_{t=0}^{\tau-1}\gamma^t]} \\&= \sup_{\tau\in\mathcal{T}}\frac{\mathbb{E}_\bs[\sum_{t=0}^{\tau\wedge \sigma-1}\gamma^tR(\bS_t)]}{\mathbb{E}_\bs[\sum_{t=0}^{\tau\wedge \sigma-1}\gamma^t]}=\sup_{\tau\in\mathcal{T}_\sigma}\frac{\mathbb{E}_\bs[\sum_{t=0}^{\tau-1}\gamma^tR(\bS_t)]}{\mathbb{E}_\bs[\sum_{t=0}^{\tau-1}\gamma^t]}.
	% \end{align*}
% In the above, the third equality follows as it is always suboptimal to take a stopping time $\tau$ such that $\mathbb{P}_\bs(\tau>\sigma)>0$, only the denominator increases in this case. 

Now, we bound the approximation error from above (the lower bound holds trivially):
\begin{align*}
	\nu(\bs) - \nu_\sigma(\bs) &= \sup_{\tau\in\mathcal{T}}\frac{\mathbb{E}_\bs[\sum_{t=0}^{\tau-1}\gamma^tR(\bS_t)]}{\mathbb{E}_\bs[\sum_{t=0}^{\tau-1}\gamma^t]}-\sup_{\tau\in\mathcal{T}}\frac{\mathbb{E}_\bs[\sum_{t=0}^{\tau\wedge \sigma-1}\gamma^tR(\bS_t)]}{\mathbb{E}_\bs[\sum_{t=0}^{\tau-1}\gamma^t]}\\&\leq 
	\sup_{\tau\in\mathcal{T}}\frac{\mathbb{E}_\bs[\sum_{t=0}^{\tau-1}\gamma^tR(\bS_t)] - \mathbb{E}_\bs[\sum_{t=0}^{\tau\wedge \sigma-1}\gamma^tR(\bS_t)]}{\mathbb{E}_\bs[\sum_{t=0}^{\tau-1}\gamma^t]}\\&= \sup_{\tau\in\mathcal{T}}\frac{\mathbb{E}_\bs[\sum_{t=\sigma\wedge \tau}^{\tau-1}\gamma^tR(\bS_t)]}{\mathbb{E}_\bs[\sum_{t=0}^{\tau-1}\gamma^t]}=\sup_{\substack{\tau\in\mathcal{T}\\
			\tau\geq \sigma}}\frac{\mathbb{E}_\bs[\sum_{t=\sigma}^{\tau-1}\gamma^tR(\bS_t)]}{\mathbb{E}_\bs[\sum_{t=0}^{\tau-1}\gamma^t]}\\
	& \leq \sup_{\substack{\tau\in\mathcal{T}\\
			\tau\geq \sigma}}\frac{\mathbb{E}_\bs[\sum_{t=\sigma}^{\tau-1}\gamma^tR(\bS_t)]}{\mathbb{E}_\bs[\sum_{t=0}^{\sigma-1}\gamma^t]} = \sup_{\substack{\tau\in\mathcal{T}\\
			\tau\geq \sigma}}\frac{(1-\gamma)\mathbb{E}_\bs[\gamma^\sigma \mathbb{E}_\bs[\sum_{t=\sigma}^{\tau-1}\gamma^{t-\sigma}R(\bS_t)|\mathcal{F}_\sigma]]}{1 -\mathbb{E}_\bs[\gamma^\sigma]}\\
	&\leq \sup_{\substack{\tau\in\mathcal{T}\\
			\tau\geq \sigma}}\frac{\mathbb{E}_\bs\left[\gamma^\sigma \left(\frac{\mathbb{E}_{\bS_\sigma}[\sum_{t=0}^{\tau-\sigma}\gamma^{t}R(\bS_t)]}{\mathbb{E}_{\bS_\sigma}[\sum_{t=0}^{\tau-\sigma} \gamma^t]}\right)^+\right]}{1 -\mathbb{E}_\bs[\gamma^\sigma]}\leq \frac{\mathbb{E}_\bs\left[\gamma^\sigma \nu(\bS_\sigma)^+\right]}{1 -\mathbb{E}_\bs[\gamma^\sigma]}.
\end{align*}
The second to last inequality above holds by the strong Markov property and as $(1-\gamma)\leq (1-\gamma)/(1-\mathbb{E}_{\bS_\sigma}[\gamma^{\tau-\sigma+1}])$ almost surely. The ``positive part" of the Gittins index in the last term above comes from the fact that we can take $\tau = \sigma$ in the seventh term.
% 	The upper bound for the above term given in the theorem follows trivially, completing the proof.
\end{proof}
\setcounter{theorem}{3}
\begin{theorem}
\thmFTCB
\end{theorem}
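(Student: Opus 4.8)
The plan is to monitor the one-step evolution of the squared error $d_m^2$ with $d_m := \omega^*-\omega_m$, using the slope bound \eqref{Ineq: bounds_f} to turn the drift into a genuine contraction and the martingale property of $(\epsilon_m)_m$ to isolate a pure variance term. First I would linearize the drift: since $f_\bs$ is strictly increasing with all difference quotients confined to $[c,\,1/(2(R_u-R_\ell))]$ by \eqref{Ineq: bounds_f}, for $\omega_m\neq\omega^*$ the quantity $q_m := (f_\bs(\omega_m)-f_\bs(\omega^*))/(\omega_m-\omega^*)$ lies in $[c,\,1/(2(R_u-R_\ell))]$ and satisfies $f_\bs(\omega_m)-f_\bs(\omega^*) = -q_m d_m$ (the case $\omega_m=\omega^*$ being trivial). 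The recursion then reads $d_{m+1} = (1-\alpha_m q_m)d_m + \alpha_m\epsilon_m$.

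Next I would square this identity and take the conditional expectation given $\mathcal{F}_{m-1}^\epsilon$. Because $\omega_m$ is a function of $\epsilon_1,\dots,\epsilon_{m-1}$ and $\alpha_m$ is predictable, the quantities $d_m$, $q_m$ and $\alpha_m$ are all $\mathcal{F}_{m-1}^\epsilon$-measurable, while $\mathbb{E}[\epsilon_m\mid\mathcal{F}_{m-1}^\epsilon]=0$; hence the cross term disappears and
$$\mathbb{E}[d_{m+1}^2\mid\mathcal{F}_{m-1}^\epsilon] = (1-\alpha_m q_m)^2 d_m^2 + \alpha_m^2\,\mathbb{E}[\epsilon_m^2\mid\mathcal{F}_{m-1}^\epsilon].$$

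The crux is to replace the random factor $q_m$ by the uniform constant $c$. Here the step-size hypothesis is indispensable: from $q_m\leq 1/(2(R_u-R_\ell))$ and $\alpha_m\leq 2(R_u-R_\ell)$ one gets $\alpha_m q_m\leq 1$, so $1-\alpha_m q_m\geq 0$; from $q_m\geq c$ one gets $1-\alpha_m q_m\leq 1-c\alpha_m$; and since $c\leq 1/(2(R_u-R_\ell))$ also forces $1-c\alpha_m\geq 0$, both factors lie in $[0,1]$ and are ordered, so that $(1-\alpha_m q_m)^2\leq (1-c\alpha_m)^2$ holds pointwise. Multiplying by $d_m^2\geq 0$ and applying the law of total expectation bounds the first contribution by $\mathbb{E}[(1-c\alpha_m)^2(\omega^*-\omega_m)^2]$. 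For the second contribution the tower property gives $\mathbb{E}[\alpha_m^2\epsilon_m^2]=\mathbb{E}[\alpha_m^2\,\mathbb{E}[\epsilon_m^2\mid\mathcal{F}_{m-1}^\epsilon]]$, which for the deterministic (or noise-independent) step-sizes under consideration equals $\mathbb{E}[\alpha_m^2]\mathbb{E}[\epsilon_m^2]$, yielding the stated recursion.

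I expect the main obstacle to be exactly this sign analysis of the contraction factor. The inequality $(1-\alpha_m q_m)^2\leq (1-c\alpha_m)^2$ is false without first establishing $0\leq 1-\alpha_m q_m\leq 1-c\alpha_m$: if the step-size were too large the factor $1-\alpha_m q_m$ could become negative and overshoot, breaking the squared-magnitude comparison. It is precisely the assumption $\sup_m\alpha_m\leq 2(R_u-R_\ell)$, combined with the upper slope bound in \eqref{Ineq: bounds_f}, that rules this out, so the coupling of the step-size cap to the Lipschitz constant of $f_\bs$ is the real content of the proof.
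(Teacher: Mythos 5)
Your proposal is correct and follows essentially the same route as the paper's proof: a one-step recursion for the squared error, cancellation of the cross term via $\mathbb{E}[\epsilon_m\mid\mathcal{F}_{m-1}^\epsilon]=0$, and the combination of the slope bounds in \eqref{Ineq: bounds_f} with the step-size cap to show the contraction factor is nonnegative and dominated by $1-c\alpha_m$. Your parametrization by the difference quotient $q_m$ is just a repackaging of the paper's $\eta(\omega_1,\omega_2)$ argument, and your explicit remark that the factorization $\mathbb{E}[\alpha_m^2]\mathbb{E}[\epsilon_m^2]$ needs the step-sizes to be independent of the current noise is a point the paper glosses over.
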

\begin{proof}
Let $\mathcal{F}_m^\epsilon = \sigma(\epsilon_0,\dots, \epsilon_{m})$ such that $\omega_m$ and $\alpha_m$ are $\mathcal{F}_{m-1}^\epsilon$-measurable. Then, as $\mathbb{E}[\epsilon_{m}|\mathcal{F}^\epsilon_{m-1}]~=~0$ for $m\geq 1$, we have
\begin{align}
	\mathbb{E}[(\omega^*-\omega_{m+1})^2|\mathcal{F}^\epsilon_{m-1}] &=   \mathbb{E}[(\omega^*-\omega_{m} + \alpha_m(f_\bs(\omega_{m})-f_\bs(\omega^*)) + \alpha_m\epsilon_m)^2|\mathcal{F}^\epsilon_{m-1}] \nonumber\\&=
	(\omega^*-\omega_{m} + \alpha_m(f_\bs(\omega_{m})-f_\bs(\omega^*))^2 + \alpha_m^2\mathbb{E}[\epsilon_m^2].\label{partialderiv}
	% \\&\leq \left(1-\frac{\alpha_{n}(1-\gamma)}{2(R_u-R_\ell)(1-\gamma^N)}\right)^2(\omega^*-\omega_n)^2 + \alpha_n^2\mathbb{E}[\epsilon_n^2]
\end{align}
Now, by \eqref{Ineq: bounds_f}, letting $\eta(\omega_1,\omega_2) = 1+\left(\frac{1-\gamma}{1-\gamma^N} -1\right)\mathbbm{1}_{[\omega_2\geq \omega_1]}$ we have
\begin{align*}
	&\left(1-\frac{\alpha_{m}\eta(\omega^*,\omega_m)}{2(R_u-R_\ell)}\right)(\omega^*-\omega_m)\leq\omega^*-\omega_m +\alpha_{m}(f_\bs(\omega_m)-f_\bs(\omega^*))\leq \left(1-\frac{\alpha_{m}\eta(\omega_m,\omega^*)}{2(R_u-R_\ell)}\right)(\omega^*-\omega_m)\end{align*}
Hence \begin{align*}
	&|\omega^*-\omega_m +\alpha_{m}(f_\bs(\omega_m)-f_\bs(\omega^*))|\\&\leq \max\left(\left(1-\frac{\alpha_{m}\eta(\omega^*,\omega_m)}{2(R_u-R_\ell)}\right)(\omega_m-\omega^*),\left(1-\frac{\alpha_{m}\eta(\omega_m,\omega^*)}{2(R_u-R_\ell)}\right)(\omega^*-\omega_m) \right)\\&=\left(1-\frac{\alpha_{m}(1-\gamma)}{2(R_u-R_\ell)(1-\gamma^N)}\right)|\omega^*-\omega_m| .
\end{align*}
The last line above follows as $\eta\in[0,1]$, hence $\left(1-\frac{\alpha_{m}\eta(x,y)}{2(R_u-R_\ell)}\right)\geq 0 $ for all $x,y$ by the assumptions on $(\alpha_m)_m$ and the maximum will be attained at the first argument if $\omega_m\geq \omega^*$ where $\eta(\omega^*,\omega_m) = \frac{1-\gamma}{1-\gamma^N}$ and the maximum will be attained at the second argument if $\omega^*\geq \omega_m$ where $\eta(\omega_m,\omega^*) = \frac{1-\gamma}{1-\gamma^N}$.
Then, continuing \eqref{partialderiv}, we have:
\begin{equation*}
	\mathbb{E}[(\omega^*-\omega_{m+1})^2|\mathcal{F}^\epsilon_{m-1}] \leq \left(1-\frac{\alpha_{m}(1-\gamma)}{2(R_u-R_\ell)(1-\gamma^N)}\right)^2(\omega^*-\omega_m)^2 +\alpha_m^2\mathbb{E}[\epsilon_m^2].
\end{equation*}
To conclude, taking expectations, we have
% \begin{align*}
	%     \mathbb{E}[(\omega^*-\omega_{m+1})^2]&=\mathbb{E}[\mathbb{E}[(\omega^*-\omega_{m+1})^2|\mathcal{E}_{m-1}]]\leq  \left(1-\frac{\alpha_{m}(1-\gamma)}{2(R_u-R_\ell)(1-\gamma^N)}\right)^2\mathbb{E}[(\omega^*-\omega_m)^2] +\alpha_m^2\mathbb{E}[\epsilon_m^2]\\&\leq\dots\leq  (\omega^*-\omega_0)^2\prod_{k=0}^m(1-c\alpha_k)^2 + \sum_{k=0}^m \alpha_k^2 \left[\prod_{\ell = k+1}^m(1-c\alpha_\ell)^2\right]\mathbb{E}[\epsilon_{k}^2]
	% \end{align*}
\begin{equation*}
	\mathbb{E}[(\omega^*-\omega_{m+1})^2] \leq \mathbb{E}[\left(1-c\alpha_m\right)^2(\omega^*-\omega_m)^2] +\mathbb{E}[\alpha_m^2]\mathbb{E}[\epsilon_m^2].
\end{equation*}
where $c = \frac{1-\gamma}{2(R_u-R_\ell)(1-\gamma^N)}$.
\end{proof}

\setcounter{lemma}{0}
\begin{lemma}\label{lemma_borkar1cond}
\Lemmanubound
\end{lemma}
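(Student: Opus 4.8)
The plan is to prove the claim by induction on $m$, resting the induction on two facts about the increment $V_{\bs,m}(\nu_m(\bs))$: a size bound and a sign property. The size bound is immediate, since $V_\bs$ is projected onto $[-1/2,1/2]$, so $|V_{\bs,m}(\nu)|\leq 1/2$ almost surely and hence $|\nu_{m+1}(\bs)-\nu_m(\bs)|\leq \alpha_m/2\leq M_\alpha/2$. The sign property I would isolate as the key sublemma: in every realisation, $V_\bs(\nu)\geq 0$ whenever $\nu> R_u$ and $V_\bs(\nu)\leq 0$ whenever $\nu< R_\ell$. Given both facts the induction closes easily: $\nu_0(\bs)\in[R_\ell,R_u]$ starts inside the interval; if $\nu_m(\bs)\in[R_\ell,R_u]$ then a single step of size at most $M_\alpha/2$ keeps $\nu_{m+1}(\bs)\in[R_\ell-M_\alpha/2,\,R_u+M_\alpha/2]$; and if $R_u<\nu_m(\bs)\leq R_u+M_\alpha/2$ then the sign property gives $V_{\bs,m}(\nu_m(\bs))\geq 0$, so $\nu_{m+1}(\bs)\leq \nu_m(\bs)\leq R_u+M_\alpha/2$, while a downward step of size at most $M_\alpha/2$ cannot drop below $R_u-M_\alpha/2\geq R_\ell-M_\alpha/2$ either. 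The case $R_\ell-M_\alpha/2\leq \nu_m(\bs)<R_\ell$ is symmetric.

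To prove the sign sublemma I would work from the per-sample quantity inside \eqref{eqn:Vsamp}, namely $\sum_{k=1}^K\min_{u\in[N]}Z^{(k)}_{[k,j],u,n}(\nu)$, since the projection onto $[-1/2,1/2]$ preserves the sign of its average. Fix $\nu>R_u$. Because $R(\bS_u)\in[R_\ell,R_u]$ for all $u<\sigma$, every increment $\nu-R(\bS_u)$ is strictly positive, so by \eqref{Zsamp1} and \eqref{Z_unn} the level-one process satisfies $Z^{(1)}_{\bi,u,n}(\nu)\geq 0$ for every $\bi,u$, giving $\min_u Z^{(1)}_{\bi,u,n}(\nu)\geq 0$. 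I would then propagate non-negativity up the recursion \eqref{Zsamp2}: taking $u=t$ in the inner minimum yields $\min_{u}Z^{(k)}_{[\bi,j],u,n}\leq Z^{(k)}_{[\bi,j],t,n}$, and, arguing inductively, the conditioning $\{\bS^{[\bi,j]}_{\ixt}=\bS^{\bi}_{\ixt}\}$ should identify $Z^{(k)}_{[\bi,j],t,n}$ with the leading term $Z^{(k)}_{[\bi,1],t,n}$, so that the subtracted average is dominated by the leading term and $Z^{(k+1)}_{\bi,t,n}(\nu)\geq 0$. For $\nu<R_\ell$ all increments are negative, so $Z^{(1)}\leq 0$ and $\min_u Z^{(1)}_{[1,j],u,n}(\nu)\leq 0$; here I would combine this with the fact that truncating the representation of Theorem~\ref{thm:infsumapprox} after $K$ terms only discards non-negative contributions, keeping the truncated sum on the correct (non-positive) side of $f_\bs(\nu)<0$. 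As a by-product this argument records the signs $f_\bs(\nu)>0$ for $\nu>R_u$ and $f_\bs(\nu)<0$ for $\nu<R_\ell$ that are invoked later in Corollary~\ref{cor:convg_nu}.

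The main obstacle is precisely this propagation step for the empirical (finite-$n$) estimators at levels $k\geq 2$. The clean ``$u=t$'' domination is transparent when the leading quantity is measurable with respect to the path up to time $t$, which holds passing from level one to level two and gives $Z^{(2)}_{\bi,t,n}(\nu)\geq 0$ outright. At higher levels, however, the leading term $Z^{(k)}_{[\bi,1],t,n}$ and the conditioned terms $Z^{(k)}_{[\bi,j],t,n}$ are built from independent nested sub-samples and share only the common history up to time $t$, so they agree in conditional distribution but not pointwise, and the term-by-term domination need not hold for a single realisation. Resolving this — either through a more careful coupling of the nested sub-samples, or by establishing the required sign for the projected average in \eqref{eqn:Vsamp} rather than term by term — is the delicate part and the step I would spend the most effort on; the size bound and the outer induction on $m$ are then routine.
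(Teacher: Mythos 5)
Your outer induction on $m$ --- combining the step bound $|\alpha_m V_{\bs,m}(\nu_m(\bs))|\leq M_\alpha/2$ with a pointwise sign property of $V_\bs(\nu)$ outside $[R_\ell,R_u]$ --- is exactly the structure of the paper's proof, and you have correctly isolated the sign sublemma as the crux. The gap is that you do not prove that sublemma, and the route you sketch (propagating non-negativity of $Z^{(k)}_{\bi,t,n}(\nu)$ up the recursion by dominating the subtracted average with the leading term) genuinely fails beyond level two, for the reason you yourself identify: at level $k\geq 3$ the conditioned terms $Z^{(k-1)}_{[\bi,j],t,n}$ depend on independent nested sub-samples and are not pointwise equal to $Z^{(k-1)}_{[\bi,1],t,n}$. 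Moreover, even where the domination does work it only yields $\min_u Z^{(k)}_{\bi,u,n}\geq 0$, which pushes the per-sample sum in \eqref{eqn:Vsamp} \emph{upward} and is therefore the wrong direction for the $\nu<R_\ell$ half of the claim; and your appeal to Theorem~\ref{thm:infsumapprox} for that half concerns expectations of the exact $Z^{(k)}$, whereas the induction on $m$ needs the sign of the \emph{realised} $V_{\bs,m}(\nu_m(\bs))$ on each sample path.

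The paper closes the gap with a sharper observation than non-negativity. For $\nu>R_u$ every increment $\nu-R(\bS_w)$ with $w<\sigma_H$ is strictly positive, so the partial sums defining $Z^{(1)}_{\bi,u,n}(\nu)$ are increasing in $u$ up to $\sigma$ and the minimum over $u\in[N]$ is attained at $u=1$, with value $c(\nu-R(\bS_0))=c(\nu-R(\bs))$ --- a \emph{deterministic} quantity depending only on the shared initial state, hence identical for every nested sub-sample $j$ and unaffected by the conditioning. Substituting this into \eqref{Zsamp2} gives $Z^{(2)}_{\bi,u,n}=Z^{(1)}_{[\bi,1],u,n}-c(\nu-R(\bs))$, so $\min_u Z^{(2)}_{\bi,u,n}=0$ exactly, and then by induction $\min_u Z^{(k)}_{\bi,u,n}=0$ for all $k\geq 2$. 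The per-sample sum in \eqref{eqn:Vsamp} therefore collapses to the single term $c(\nu-R(\bs))>0$, giving $V_\bs(\nu)>0$ almost surely; the paper treats $\nu<R_\ell$ as the symmetric case. It is this exact-vanishing of the higher-order empirical minima --- not a one-sided bound on them --- that makes the sign sublemma go through, and without it (or a substitute such as the coupling you allude to) your proof is incomplete.
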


\begin{proof}
Note that if  $\nu> R_u$ it holds that irrespective of the path $\bS$  $$\min_{u\in [N]}Z_{\bi,u,n}^{(1)}(\nu)  =\min_{u\in [N]}\;c \sum_{w=0}^{(u\wedge \sigma_H)-1}\gamma^w (\nu-R(\bS_w)) = c  (\nu-R(\bS_0))=Z_{\bi,1,n}^{(1)}(\nu)>0.$$ For $k=2$, we hence have irrespective of the path $\bS$, that when $\nu> R_u$ 
\begin{align*}\min_{u\in [N]}Z_{\bi,u,n}^{(2)}(\nu) &= \min_{u\in [N]}Z_{[\bi,1],u,n}^{(1)}(\nu) - \frac{1}{n(\bi,2)}\sum_{j=2}^{n(\bi,2)+1}\left(\min_{w\in [N]}Z_{[\bi,j],w,n}^{(1)}(\nu)\Big|\{\bS^{[\bi,j]}_{\ixv{u}}=\bS^\bi_{\ixv{u}}\}\right)\\& = \min_{u\in [N]}Z_{[\bi,1],u,n}^{(1)}(\nu) - \left(Z_{[\bi,j],1,n}^{(1)}(\nu)\Big|\{\bS^{[\bi,j]}_{\ixv{u}}=\bS^\bi_{\ixv{u}}\}\right)=0. \end{align*}

From this, we see that for $k> 2$ $$\min_{u\in [N]}Z_{\bi,u,n}^{(k)}(\nu) = \min_{u\in [N]}Z_{[\bi,1],u,n}^{(k-1)}(\nu) - \frac{1}{n(\bi,k)}\sum_{j=2}^{n(\bi,k)+1}\left(\min_{w\in [N]}Z_{[\bi,j],w,n}^{(k-1)}(\nu)\Big|\{\bS^{[\bi,j]}_{\ixv{u}}=\bS^\bi_{\ixv{u}}\}\right)=0 - 0 =0.$$

Hence for $\nu> R_u$ we have almost surely \begin{align*}
	V^{(K)}_{\bs}(\nu) &= \max\left(-1/2,\;\min\left(1/2,\; \frac{1}{n(K,K)}\sum_{j=1}^{n(K,K)}\sum_{k=1}^K \min_{u\in [N]}Z_{[k,j],u, n}^{(k)}\right)\right)\\
	& = \max\left(-1/2,\;\min\left(1/2,\; \frac{1}{n(K,K)}\sum_{j=1}^{n(K,K)} Z_{[1,j],1, n}^{(1)}\right)\right)
	>0.
\end{align*}  
Similarly, observe that $V^{(K)}_{\bs}(\nu)<0$ almost surely if $\nu< R_\ell$. Hence by \eqref{stoch_approx} we have almost surely for all $m$ that
$$\nu_m\in[R_\ell-M_\alpha/2,\; R_u +M_\alpha/2].$$

\end{proof}

\setcounter{theorem}{8}
\setcounter{lemma}{3}
\begin{lemma} \label{thm:deriv}\Thmderiv

% Z^k niet monotoon in nu, ook niet als je min neemt, ook niet als je de  verw neemt, alleen f^K is monotoon
% Z^k>= 0 alleen voor k=2 daarna niet meer zeker

\end{lemma}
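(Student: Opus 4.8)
The plan is to establish the pathwise identity
\[
\tfrac{d}{d\nu}V_\bs(\nu)=\Big(\sum_{k=1}^{K}h^{(k)}_{[k,1],n}\big(U^{(k)}_{[k,1],n}(\nu),\nu\big)\Big)\,\mathbbm{I}\big(\tilde V_\bs(\nu)\in[-1/2,1/2]\big)
\]
for almost every path at all but finitely many $\nu$, and then to pass the derivative through $\mathbb{E}_\bs$ by dominated convergence; since the indicator does not depend on $k$ it factors out of the sum, giving the stated form. First I would record the affine/piecewise-affine structure of the sampled processes in $\nu$ by induction on $k$. From \eqref{Z_unn}, \eqref{Zsamp1}, along a fixed path $Z^{(1)}_{\bi,u,n}(\cdot)$ is affine with slope $c\sum_{w=0}^{(u\wedge\sigma_H)-1}\gamma^w=h^{(1)}_{\bi,n}(u,\nu)$, where $c=(1-\gamma)/(2(R_u-R_\ell)(1-\gamma^N))$. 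Assuming every $Z^{(k)}_{\bi,u,n}(\cdot)$ is continuous and piecewise affine, the recursion \eqref{Zsamp2} builds $Z^{(k+1)}_{\bi,t,n}$ as a finite affine combination of functions of this type together with finite minima $\min_u Z^{(k)}_{[\bi,j],u,n}$; since a finite minimum of continuous piecewise-affine functions is again continuous and piecewise affine, the property is inherited, closing the induction for all $k$ and hence for $\tilde V_\bs$.

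Then I would compute the pathwise derivative by the envelope (Danskin) property. Wherever the minimiser $U^{(k)}_{\bi,n}(\nu)$ is unique, $\min_u Z^{(k)}_{\bi,u,n}$ is differentiable at $\nu$ and its derivative is the slope of the active branch, i.e.\ $h^{(k)}_{\bi,n}(U^{(k)}_{\bi,n}(\nu),\nu)$. Differentiating \eqref{Zsamp2} term by term then reproduces exactly the recursion defining $h^{(k+1)}$; the conditioning $|\{\bS^{[\bi,j]}_{\ixt}=\bS^\bi_{\ixt}\}$ is carried through unchanged because the conditioning event does not involve $\nu$. The outer clamp $\max(-1/2,\min(1/2,\cdot))$ is differentiable wherever $\tilde V_\bs(\nu)\ne\pm1/2$, and there contributes the factor $\mathbbm{I}(\tilde V_\bs(\nu)\in[-1/2,1/2])$, which establishes the displayed pathwise identity at every $\nu$ that is neither a kink of some $\min_u Z^{(k)}_{[\bi,\cdot],\cdot,n}$ nor a level-crossing point $\tilde V_\bs(\nu)=\pm1/2$.

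The main obstacle is controlling this bad set uniformly in the path and justifying the interchange of derivative and expectation. Each kink arises where two affine branches cross, i.e.\ at a value of $\nu$ equal to a ratio of linear combinations of the rewards $R(\bS_w)$ along the path, and likewise $\tilde V_\bs(\nu)=\pm1/2$ solves an equation linear in those rewards; when the conditional laws of the $R(\bS_t)$ are atomless these crossing values have atomless laws, so for each fixed $\nu$ the bad event has probability zero. The hypothesis that each $R(\bS_t)$ has a distribution function with only finitely many jumps means each reward has finitely many atoms, and a crossing value can coincide with a fixed $\nu$ with positive probability only if the relevant rewards take atom values, which pins $\nu$ to a finite deterministic set; outside this finite set the pathwise derivative exists almost surely. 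Finally, since $|h^{(1)}|\le 1/(2(R_u-R_\ell))$ and each recursion step is a bounded affine average, every $|h^{(k)}|$ is bounded, so the difference quotients $(V_\bs(\nu+\delta)-V_\bs(\nu))/\delta$ are dominated by a deterministic constant, namely the uniform Lipschitz constant underlying Lemma~\ref{lemma:properties}; dominated convergence then yields $\frac{d}{d\nu}\tilde f_\bs(\nu)=\mathbb{E}_\bs[\frac{d}{d\nu}V_\bs(\nu)]$ at all but finitely many $\nu$, which is the claimed formula.
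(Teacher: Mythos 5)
Your proposal is correct and follows essentially the same route as the paper's proof: both establish the (locally) affine-in-$\nu$ structure of the sampled processes with coefficients depending on $\nu$ only through the minimizers, differentiate the nested minima via an envelope argument valid where the minimizers are locally constant, invoke the finitely-many-jumps hypothesis to confine the exceptional points $\nu$ to a finite set, and pass the derivative through the expectation by dominated convergence using the uniform bound on the slopes $h^{(k)}$. The paper carries out the envelope step by an explicit two-sided squeeze on difference quotients and is somewhat more explicit that the conditioning events depend on $\nu$ through the minimizing times (resolving this via local constancy of the minimizers, which your envelope argument also supplies), but this is the same argument.
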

\begin{proof}

First, we show by induction that for all $k$, $\nu$, random times $T$ and sets $E$ in $\times_\bi\sigma(\bS^{\bi})$ (the smallest product sigma algebra measuring all versions $\bS^\bi$) we have
% \begin{align*}
	%   \mathbb{I}\Bigg( &\left(Z_{[\bi,j],t}^{(K)}(\nu_1)|\{\bS^{[\bi,j]}_{[t_2]}=\bS^{[\bi]}_{[t_2]}\}\right) \leq \left(Z_{[\bi,j],u}^{(K)}(\nu_1)|\{\bS^{[\bi,j]}_{[u_2]}=\bS^{[\bi]}_{[u_2]}\}\right) ,\\& \left(Z_{[\bi,j],t}^{(K)}(\nu_2)|\{\bS^{[\bi,j]}_{[t_2]}=\bS^{[\bi]}_{[t_2]}\}\right) > \left(Z_{[\bi,j],u}^{(K)}(\nu_2)|\{\bS^{[\bi,j]}_{[u_2]}=\bS^{[\bi]}_{[u_2]}\}\right) \Bigg)\leq H_{k,\bt,\bu}(\nu_1,\nu_2)
	% \end{align*}
\begin{align*}
	&Z_{\bi,T,n}^{(k)}(\nu)\mid E =   A^{(k)}_{\bi,T,n}(\nu)\mid E+\nu \cdot B^{(k)}_{\bi,T,n}(\nu)\mid E
\end{align*}
where $A^{(k)}_{\bi,t,n}$, $B^{(k)}_{\bi,t,n}$ are random variables  only depending on $\nu$ through the minimizers $U^{(k')}_{\bi,n}(\nu)$ for $k'\leq k$.

For $k=1$ the result is immediately verified for $A_{\bi,t,n}^{(1)}(\nu)=-\frac{(1-\gamma)}{(R_u-R_\ell)(1-\gamma^N)}\sum_{u=0}^{t-1}\gamma^u R(\bS_u^\bi)$ and $B^{(1)}_{\bi,t,n}(\nu)=b_t$.

Now assume the result holds up to $k$, then by \eqref{defZk}, as $\{\bS^{[\bi,j]}_{\ixT}= \bS^{[\bi]}_{\ixT}\}\in\times_\bi \sigma(\bS^\bi)$ for all $j$
% \begin{align*}
	%     &\mathbb{I}\Bigg( \left(Z_{[\bi,j],t}^{(1)}(\nu_1)|\{\bS^{[\bi,j]}_{[t_2]}=\bS^{[\bi]}_{[t_2]}\}\right) \leq \left(Z_{[\bi,j],u}^{(1)}(\nu_1)|\{\bS^{[\bi,j]}_{[u_2]}=\bS^{[\bi]}_{[u_2]}\}\right) ,\\&\phantom{\mathbb{P}\Bigg(} \left(Z_{[\bi,j],t}^{(1)}(\nu_2)|\{\bS^{[\bi,j]}_{[t_2]}=\bS^{[\bi]}_{[t_2]}\}\right) > \left(Z_{[\bi,j],u}^{(1)}(\nu_2)|\{\bS^{[\bi,j]}_{[u_2]}=\bS^{[\bi]}_{[u_2]}\}\right) \Bigg) \\&= 
	%      \mathbb{I}\Bigg(\nu_1\geq \left(\frac{A_t^{[\bi,j]}|\{\bS^{[\bi,j]}_{[t_2]}=\bS^{[\bi]}_{[t_2]}\} - A_u^{[\bi,j]}|\{\bS^{[\bi,j]}_{[u_2]}=\bS^{[\bi]}_{[u_2]}\}}{b_t-b_u}\right) >\nu_2 \Bigg) = H_{1,t,u}(\nu_1,\nu_2)
	% \end{align*}
\begin{align*}
	Z_{\bi,T,n}^{(k)}(\nu)\mid E
	%      &= Z^{(k-1)}_{[\bi,1], T,n}(\nu)\mid E - \frac{1}{n(\bi,k)}\sum_{j=2}^{n(\bi,k)+1}\left(Z^{(k-1)}_{[\bi,j],U_{[\bi,j],n}^{(k-1)}(\nu),n}(\nu
	%   )\Big|(\{\bS^{[\bi,j]}_{\ixT}= \bS^{[\bi]}_{\ixT}\}\cap E) \right)\\&=A^{(k-1)}_{[\bi,1],T,n}\mid E+\nu\cdot B^{(k-1)}_{[\bi,1],T,n}\mid  E \\&\phantom{=\;}-\frac{1}{n(\bi,k)}\sum_{j=2}^{n(\bi,k)+1}\Big[A^{(k-1)}_{[\bi,j],U_{[\bi,j],n}^{(k-1)}(\nu),n}\mid(\{\bS^{[\bi,j]}_{\ixT}= \bS^{[\bi]}_{\ixT}\}\cap E)\\&\phantom{=\;-\frac{1}{n(\bi,k)}\sum_{j=2}^{n(\bi,k)+1}\Big[}+\nu\cdot B^{(k-1)}_{[\bi,j],U_{[\bi,j],n}^{(k-1)}(\nu),n}\mid(\{\bS^{[\bi,j]}_{\ixT}= \bS^{[\bi]}_{\ixT}\}\cap E)\Big]\\&= 
	= A^{(k)}_{\bi,T,n}\mid E+\nu\cdot B^{(k)}_{\bi,T,n}\mid E
\end{align*}
for  \begin{align*}&A^{(k)}_{\bi,T,n} = A^{(k-1)}_{[\bi,1],T,n} -\frac{1}{n(\bi,k)}\sum_{j=2}^{n(\bi,k)+1}A^{(k-1)}_{[\bi,j],U_{[\bi,j],n}^{(k-1)}(\nu),n}|\{\bS^{[\bi,j]}_{\ixT}= \bS^{[\bi]}_{\ixT}\},\\& B^{(k)}_{\bi,T,n}
	=B^{(k-1)}_{[\bi,1],T,n}-\frac{1}{n(\bi,k)}\sum_{j=2}^{n(\bi,k)+1}B^{(k-1)}_{[\bi,j],U_{[\bi,j],n}^{(k-1)}(\nu),n}|\{\bS^{[\bi,j]}_{\ixT}= \bS^{[\bi]}_{\ixT}\}.\end{align*}

Second, we show that all but finitely many points $\nu$ we have $$ \mathbb{P}\left(\lim_{\nu'\rightarrow\nu}U^{(k)}_{\bi,n}(\nu')=U^{(k)}_{\bi,n}(\nu)\right)=1.$$ 

For $k\geq 2$ and $\bt \in [N]^{\prod_{k'=1}^{k-1}(1+n(\bi,k'))}$ let $A^{(k)}_{\bi,\bt,n}$ be $A^{(k)}_{\bi,T,n}$ where every nested random time is replaced by the elements of $\bt$ consecutively, and let $b^{(k)}_{\bi,\bt,n}$ be defined similarly for $B^{(k)}_{\bi,T, n}$, note that this makes $b^{(k)}_{\bi,\bt,n}$ deterministic.

For $\nu_2<\nu_1$
\begin{align*}
	&\mathbb{I}(U_{\bi,n}^{(k)}(\nu_1) \neq U^{(k)}_{\bi,n}(\nu_2)) \leq  \mathbb{I}(\exists t,u\;\; Z_{\bi,t,n}^{(k)}(\nu_1) \leq Z_{\bi,u,n}^{(k)}(\nu_1) ,\; Z_{\bi,t,n}^{(k)}(\nu_2) > Z_{\bi,u,n}^{(k)}(\nu_2) )\\&\leq
	% \sum_{t,u}\mathbb{I}( Z_{\bi,t,n}^{(k)}(\nu_1) \leq Z_{\bi,u,n}^{(k)}(\nu_1) ,\; Z_{\bi,t,n}^{(k)}(\nu_2) > Z_{\bi,u,n}^{(k)}(\nu_2) )\\&=\sum_{t,u}\mathbb{I}\Bigg( A^{(k)}_{\bi,u,n}(\nu_1)+\nu_1\cdot  B^{(k)}_{\bi,u,n}(\nu_1) \leq A^{(k)}_{\bi,t,n}(\nu_1)+\nu_1\cdot B^{(k)}_{\bi, t,n}(\nu_1),\\&
	% \phantom{=\sum_{t,u}\mathbb{I}\Bigg(}A^{(k)}_{\bi,u,n}(\nu_2)+\nu_2\cdot B^{(k)}_{\bi,u,n}(\nu_2)> A^{(k)}_{\bi,t,n}(\nu_2)+\nu_2\cdot B^{(k)}_{\bi, t,n}(\nu_2)\Bigg)\\&\leq 
	\mathbb{I}\Bigg( \exists\; \bt,\, \bu,\, A^{(k)}_{\bi,\bt,n}+\nu_1\cdot b^{(k)}_{\bi,\bt,n}\leq A^{(k)}_{\bi,\bu,n}+\nu_1\cdot b^{(k)}_{\bi, \bu,n},\;
	A^{(k)}_{\bi,\bt,n}+\nu_2\cdot b^{(k)}_{\bi,\bt,n}> A^{(k)}_{\bi,\bu,n}+\nu_2\cdot b^{(k)}_{\bi, \bu,n}\Bigg)\\&\leq \sum_{\bt,\bu}\mathbb{I}\Bigg( A^{(k)}_{\bi,\bt,n}+\nu_1\cdot b^{(k)}_{\bi,\bt,n}\leq A^{(k)}_{\bi,\bu,n}+\nu_1\cdot b^{(k)}_{\bi, \bu,n},\;
	A^{(k)}_{\bi,\bt,n}+\nu_2\cdot b^{(k)}_{\bi,\bt,n}> A^{(k)}_{\bi,\bu,n}+\nu_2\cdot b^{(k)}_{\bi, \bu,n}\Bigg)\\& = \sum_{\bt,\bu}\mathbb{I}\Bigg(\frac{A^{(k)}_{\bi,\bt,n}-A^{(k)}_{\bi,\bu,n}}{b^{(k)}_{\bi,\bu,n}-b^{(k)}_{\bi,\bt,n}}\in(\nu_2,\nu_1]\Bigg)
\end{align*}
The indicators above converge to $\mathbb{I}\Bigg(\frac{A^{(k)}_{\bi,\bu,n}-A^{(k)}_{\bi,\bt,n}}{b^{(k)}_{\bi,\bt,n}-b^{(k)}_{\bi,\bu,n}}=\nu_1\Bigg)$ when $\nu_2\uparrow\nu_1$ which has nonzero probability of being equal to one for at most finitely many points $\nu_1$ by the assumption. The direction $\nu_1\downarrow \nu_2$ can be shown by changing the first bound above to $$\mathbb{I}(\exists t,u\;\; Z_{\bi,t,n}^{(k)}(\nu_1) < Z_{\bi,u,n}^{(k)}(\nu_1) ,\; Z_{\bi,t,n}^{(k)}(\nu_2) \geq Z_{\bi,u,n}^{(k)}(\nu_2) )$$ and following the same steps as above.

Third, we show for points $\nu_1$ where  all functions $U^{(k')}_{\bi,n}$ are continuous for $k'\leq k$ that
\begin{equation}
	\frac{d}{d\nu} \min_{u\in[N]}Z_{[k,1],u,n}^{(k)}(\nu) = h_{[k,1],n}^{(k)}(U^{(k)}_{[k,1],n}(\nu),\nu).\label{result_deriv}
\end{equation} 
Note that, by the second step above, the complement of this set is a subset of the set $\mathcal{V}$ of jump points for the cumulative distribution functions of $(A^{(k')}_{\bi,\bu,n}-A^{(k')}_{\bi,\bt,n})(b^{(k')}_{\bi,\bt,n}-b^{(k')}_{\bi,\bu,n})$ for all $\bi, \bt,\bu,k'\leq k.$

We show the result by induction. Let $k=1$, then we have for $\nu_1> \nu_2$
\begin{align*}
	&\frac{\min_{u\in[N]}Z_{[1,1],u,n}^{(1)}(\nu_1) - \min_{u\in[N]}Z_{[1,1],u,n}^{(1)}(\nu_2) }{\nu_1-\nu_2} \geq \frac{Z_{[1,1],U_{[1,1],n}^{(1)}(\nu_1),n}(\nu_1) - Z_{[1,1],U_{[1,1],n}^{(1)}(\nu_1),n}(\nu_2) }{\nu_1-\nu_2} = b_{U_{[1,1],n}^{(1)}(\nu_1)},  \\
	&\frac{\min_{u\in[N]}Z_{[1,1],u,n}^{(1)}(\nu_1) - \min_{u\in[N]}Z_{[1,1],u,n}^{(1)}(\nu_2) }{\nu_1-\nu_2} \leq \frac{Z_{[1,1],U_{[1,1],n}^{(1)}(\nu_2),n}(\nu_1) - Z_{[1,1],U_{[1,1],n}^{(1)}(\nu_2),n}(\nu_2) }{\nu_1-\nu_2} =b_{U_{[1,1],n}^{(1)}(\nu_2)}.
\end{align*}
By the choice of $\nu_1$ we have that the upper and lower bound almost everywhere almost surely converge to $h_{[1,1],n}^{(1)}(U^{(1)}_{[1,1],n}(\nu_1),\nu_1) = b_{U_{[1,1],n}^{(1)}(\nu_1)}$ when $\nu_2\uparrow\nu_1$. The case $\nu_1<\nu_2$ can be shown similarly.

Let the above statement hold up to $k$, then for $\nu_1>\nu_2$
% Then by linearity of expectation 
% \begin{align*}
	%      \frac{d}{d\nu}\min_{u\in [N]}Z^{(K)}_{[k,1], u}(\nu) &= \frac{d}{d\nu}Z^{(K)}_{[k,1], U^{[k,1]}_k(\nu)}(\nu)  \\&=\frac{d}{d\nu}\left[Z^{(k-1)}_{[\bi,1], U^{[k,1]}_k(\nu)}(\nu) - \frac{1}{n(\bi,k)}\sum_{j=2}^{n(\bi,k)+1}\left(Z^{(k-1)}_{[\bi,j],U^{[k,1,j]}_{k-1}(\nu)}\left(\nu
	%   \right)\Big|\{\bS^{[k,1,j]}_{[U^{[k,1]}_k(\nu)]}= \bS^{[k,1]}_{[U^{[k,1]}_k(\nu)]}\}\right)\right] \\&\stackrel{(*)}{=}
	%   h_\bs^{(k-1)}(U^{[k,1]}_k(\nu)) - \frac{1}{n(\bi,k)}\sum_{j=2}^{n(\bi,k)+1} h_\bs^{(k-1)}(U^{[k,1,j]}_{k-1}(\nu)|\{\bS^{[k,1,j]}_{[U^{[k,1]}_k(\nu)]} = \bS^{[k,1]}_{[U^{[k,1]}_k(\nu)]}\})\\& =h_\bs^{(K)}(U_k^{[k,1]}(\nu)).
	% \end{align*}
% The equality (*) holds as
\begin{align*}
	&\min_{u\in[N]}Z^{(k)}_{[k,1], u,n}(\nu_1)-\min_{u\in[N]}Z^{(k)}_{[k,1], u,n}(\nu_2)\\
	&=Z^{(k-1)}_{[k,1,1], U_{[k,1],n}^{(k)}(\nu_1),n}(\nu_1) - \frac{1}{n(\bi,k)}\sum_{j=2}^{n(\bi,k)+1}\left(Z^{(k-1)}_{[k,1,j],U_{[k,1,j],n}^{(k-1)}(\nu_1),n}\left(\nu_1
	\right)\Big|\{\bS^{[k,1,j]}_{\ixv{U_{[k,1],n}^{(k)}(\nu_1)}}= \bS^{[k,1]}_{\ixv{U_{[k,1],n}^{(k)}(\nu_1)}}\}\right)\\&\phantom{=\;} - Z^{(k-1)}_{[k,1,1], U_{[k,1],n}^{(k)}(\nu_2),n}(\nu_2) + \frac{1}{n(\bi,k)}\sum_{j=2}^{n(\bi,k)+1}\left(Z^{(k-1)}_{[k,1,j],U_{[k,1,j],n}^{(k-1)}(\nu_2),n}\left(\nu_2
	\right)\Big|\{\bS^{[k,1,j]}_{\ixv{U_{[k,1],n}^{(k)}(\nu_2)}}= \bS^{[k,1]}_{\ixv{U_{[k,1],n}^{(k)}(\nu_2)}}\}\right)\\&\geq Z^{(k-1)}_{[k,1,1], U_{[k,1],n}^{(k)}(\nu_1),n}(\nu_1) - \frac{1}{n(\bi,k)}\sum_{j=2}^{n(\bi,k)+1}\left(Z^{(k-1)}_{[k,1,j],U_{[k,1,j],n}^{(k-1)}(\nu_2),n}\left(\nu_1
	\right)\Big|\{\bS^{[k,1,j]}_{\ixv{U_{[k,1],n}^{(k)}(\nu_1)}}= \bS^{[k,1]}_{[U_{[k,1],n}^{(K)}(\nu_1)]}\}\right)\\&\phantom{=} - Z^{(k-1)}_{[k,1,1], U_{[k,1],n}^{(k)}(\nu_1),n}(\nu_2) + \frac{1}{n(\bi,k)}\sum_{j=2}^{n(\bi,k)+1}\left(Z^{(k-1)}_{[k,1,j],U_{[k,1,j],n}^{(k-1)}(\nu_2),n}\left(\nu_2
	\right)\Big|\{\bS^{[k,1,j]}_{\ixv{U_{[k,1],n}^{(k)}(\nu_1)}}= \bS^{[k,1]}_{\ixv{U_{[k,1],n}^{(k)}(\nu_1)}}\}\right)\\&= \left(Z^{(k-1)}_{[k,1,1], U_{[k,1],n}^{(k)}(\nu_1),n}(\nu_1) -Z^{(k-1)}_{[k,1,1], U_{[k,1],n}^{(k)}(\nu_1),n}(\nu_2)\right) \\& \phantom{=}  + \frac{1}{n(\bi,k)}\sum_{j=2}^{n(\bi,k)+1}\left(Z^{(k-1)}_{[k,1,j],U_{[k,1,j],n}^{(k-1)}(\nu_2),n}(\nu_1
	)-Z^{(k-1)}_{[k,1,j],U_{[k,1,j],n}^{(k-1)}(\nu_2),n}\left(\nu_2
	\right)\right)\Big|\{\bS^{[k,1,j]}_{\ixv{U_{[k,1],n}^{(k)}(\nu_1)}}= \bS^{[k,1]}_{\ixv{U_{[k,1],n}^{(k)}(\nu_1)}}\}.
\end{align*}
Taking $\nu_2\uparrow\nu_1$ above we see that 
$${\lim\inf}_{\nu_2\uparrow\nu_1}\frac{\min_{u\in[N]}Z^{(k)}_{[k,1], u,n}(\nu_1)-\min_{u\in[N]}Z^{(k)}_{[k,1], u,n}(\nu_2)}{\nu_1-\nu_2}\geq h^{(k)}_{[k,1],n}(U^{(k)}_{[k,1],n}(\nu_1),\nu_1).$$
Similarly, it can be shown that 
\begin{align*}
	&\min_{u\in[N]}Z^{(k)}_{[k,1], u,n}(\nu_1)-\min_{u\in[N]}Z^{(k)}_{[k,1], u,n}(\nu_2)\\&\leq \left(Z^{(k-1)}_{[k,1,1], U_{[k,1],n}^{(k)}(\nu_2),n}(\nu_1) -Z^{(k-1)}_{[k,1,1], U_{[k,1],n}^{(k)}(\nu_2),n}(\nu_2)\right) \\& \phantom{=}  + \frac{1}{n(\bi,k)}\sum_{j=2}^{n(\bi,k)+1}\left(Z^{(k-1)}_{[k,1,j],U_{[k,1,j]}^{(k-1),n}(\nu_1),n}(\nu_1
	)-Z^{(k-1)}_{[k,1,j],U_{[k,1,j],n}^{(k-1)}(\nu_1),n}\left(\nu_2
	\right)\right)\Big|\{\bS^{[k,1,j]}_{\ixv{U_{[k,1],n}^{(k)}(\nu_2)}}= \bS^{[k,1]}_{\ixv{U_{[k,1],n}^{(k)}(\nu_2)}}\}.
\end{align*}
Now, by the choice of $\nu_1$, we know there is a random variable $\epsilon(\nu_1)$ such that $U_{[k,1],n}^{(k)}(\nu_2) = U_{[k,1],n}^{(k)}(\nu_1)$ almost surely for all $\nu_2$ such that $|\nu_1-\nu_2|\leq \epsilon(\nu_1)$. Hence, for every sample path, for $\nu_2$ close enough to $\nu_1$, we condition on $\{\bS^{[k,1,j]}_{\ixv{U_{[k,1],n}^{(k)}(\nu_1)}}= \bS^{[k,1]}_{\ixv{U_{[k,1],n}^{(k)}(\nu_1)}}\}$ above. From this it follows that almost surely
$${\lim\sup}_{\nu_2\uparrow\nu_1}\frac{\min_{u\in[N]}Z^{(k)}_{[k,1], u,n}(\nu_1)-\min_{u\in[N]}Z^{(k)}_{[k,1], u,n}(\nu_2)}{\nu_1-\nu_2}\leq h^{(k)}_{[k,1],n}(U_{[k,1],n}^{(k)}(\nu_1),\nu_1)$$
hence \eqref{result_deriv} holds. The case $\nu_1<\nu_2$ works similarly.

By induction it can be verified that the right-hand side of \eqref{result_deriv} is bounded for each $k$. 

We have for $\nu_1\in\mathcal{V}$ and $\nu_1>\nu_2$
\begin{align*}
	\frac{\tilde{f}_\bs(\nu_1) - \tilde{f}_\bs(\nu_2)}{\nu_1-\nu_2} &= \mathbb{E}\left[\sum_{k=1}^K \frac{\min_{u\in [N]}Z_{[1,j],u, n}^{(k)}(\nu_1) - \min_{u\in [N]}Z_{[1,j],u, n}^{(k)}(\nu_2)}{\nu_1-\nu_2}\mathbbm{I}((\tilde{V}_\bs(\nu_1),\tilde{V}_\bs(\nu_2))\in [-1/2,1/2]^2)\right]\\&+
	\mathbb{E}\left[ \frac{V_\bs(\nu_1) - V_\bs(\nu_2)}{\nu_1-\nu_2}\mathbbm{I}((\tilde{V}_\bs(\nu_1),\tilde{V}_\bs(\nu_2))\notin [-1/2,1/2]^2)\right]
	. 
\end{align*}
When $\nu_2\uparrow\nu_1$ by continuity and boundedness of the derivative the first term goes to 
$$\sum_{k=1}^K\mathbb{E}_\bs\left[h^{(k)}_{[k,1],n}\left(U^{(k)}_{[k,1],n}(\nu),\nu\right)\mathbbm{I}(\tilde{V}_{\bs}(\nu)\in [-1/2,1/2])\right]$$
while the second term goes to zero by continuity (of $\tilde{V}_{\bs}$) and boundedness, where in both cases the dominated convergence theorem was used. 
\end{proof}

\begin{theorem}
\Thmconvgdist
\end{theorem}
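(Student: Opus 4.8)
The plan is to run the argument in three stages, mirroring the adaptive stochastic approximation theory of \citet{lai1978limit}. Throughout I write $\beta=\frac{d}{d\nu}f^{(K)}_{\bs,n}(\tilde\nu(\bs))$ for the quantity the estimator $h_m$ is built to track: by Lemma~\ref{Thm:deriv} each $h_m(\nu)=\sum_{k=1}^K h^{(k)}_{m,[k,1],n}(U^{(k)}_{m,[k,1],n}(\nu),\nu)$ is a sample-path derivative of $\sum_k\min_u Z^{(k)}_{[k,1],u,n}(\nu)$, so that $\mathbb{E}[h_m(\nu)]=\frac{d}{d\nu}f^{(K)}_{\bs,n}(\nu)$.

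First I would settle uniqueness and almost sure convergence. Since $\tilde f_\bs$ is Lipschitz (Lemma~\ref{lemma:properties}), differentiable off a finite set, and $\inf_{\nu\in\mathcal V}\frac{d}{d\nu}\tilde f_\bs(\nu)>0$, the function $\tilde f_\bs$ is strictly increasing; together with $\tilde f_\bs(R_\ell)<0<\tilde f_\bs(R_u)$ from the proof of Corollary~\ref{cor:convg_nu}, this gives a unique root $\tilde\nu(\bs)$. To apply Corollary~\ref{cor:convg_nu} I must check that $\alpha_m=1/|\sum_{\ell=1}^m h_\ell(\nu_\ell(\bs))|$ satisfies the Robbins–Monro conditions~\eqref{RMcond} almost surely. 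Boundedness of the summands $h^{(k)}_{[k,1],n}$ (established at the end of the proof of Lemma~\ref{Thm:deriv}) yields $|\sum_{\ell\le m}h_\ell|\le Hm$ for some constant $H$, so $\alpha_m\ge 1/(Hm)$ and $\sum_m\alpha_m=\infty$; the square-summability $\sum_m\alpha_m^2<\infty$ is the harder half, and I would derive it from a linear lower bound on $|\sum_{\ell\le m}h_\ell(\nu_\ell)|$, obtained by combining positivity of $\mathbb{E}[h_\ell(\nu)]$ on the compact range of the iterates (Lemma~\ref{lemma_nubound}) with a strong law for the independent, uniformly bounded increments $h_\ell$. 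Corollary~\ref{cor:convg_nu} then gives $\nu_m(\bs)\to\tilde\nu(\bs)$ almost surely.

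Second, I would pin down the step-size scale $m\alpha_m\to 1/\beta$ almost surely. With $\nu_m\to\tilde\nu$ in hand, $\frac1m\sum_{\ell=1}^m h_\ell(\nu_\ell(\bs))\to\mathbb{E}[h(\tilde\nu(\bs))]=\beta>0$ by the strong law for the independent bounded sequence $(h_\ell)$ together with continuity of $\nu\mapsto\mathbb{E}[h(\nu)]$ at $\tilde\nu$; inverting gives the claim. This is the step where the adaptive normalisation earns its keep: because $h_m$ is consistent for the local slope of the drift at the root, the gain behaves like $\frac{1}{\beta m}$, which is exactly the optimal Robbins–Monro constant.

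Finally, the central limit theorem. Writing $V_{\bs,m}(\nu_m)=\tilde f_\bs(\nu_m)+\epsilon_m$ with $(\epsilon_m)$ the bounded martingale difference sequence~\eqref{MartDiffSeq}, recursion~\eqref{stoch_approx} is a Robbins–Monro scheme whose gain satisfies $m\alpha_m\to 1/\beta$, whose drift is locally linear at $\tilde\nu$ with slope $\beta$, and whose noise has limiting second moment $\mathbb{E}[\epsilon_m^2]\to\mathbb{E}[V_\bs(\tilde\nu(\bs))^2]=:s^2$ (using $\tilde f_\bs(\tilde\nu)=0$). Substituting the gain constant $a=1/\beta$ into the classical asymptotic variance $a^2 s^2/(2a\beta-1)$ collapses it to $s^2/\beta^2$, the stated variance. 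To make this rigorous I would invoke \citet{lai1978limit}, verifying its hypotheses against the two non-standard features flagged in the proof sketch: the residuals are not i.i.d. but form a uniformly bounded martingale difference sequence, so a martingale central limit theorem replaces the i.i.d. one; and $\tilde f_\bs$ is differentiable only off a finite set, which is harmless because $\nu_m$ concentrates near the smooth point $\tilde\nu$ and spends asymptotically negligible gain-weighted time elsewhere. I expect the main obstacle to be precisely the almost sure control of the adaptive gain in the first two stages: because $\alpha_m$ depends on the entire past trajectory $(\nu_\ell)_{\ell\le m}$ that it itself drives, the decoupling behind the strong laws must be re-established for the bounded-but-dependent increments, and it is this self-referential feedback — rather than the final CLT bookkeeping — that carries the real work.
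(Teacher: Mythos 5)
Your proposal is correct and follows essentially the same route as the paper's proof: verify the Robbins--Monro conditions for the adaptive gain (divergence from boundedness of $h_\ell$, square-summability from a strong-law lower bound on the running sum $|\sum_{\ell\le m}h_\ell(\nu_\ell(\bs))|$), invoke Corollary~\ref{cor:convg_nu} for almost sure convergence, establish $m\alpha_m\to 1/\tfrac{d}{d\nu}f^{(K)}_{\bs,n}(\tilde\nu(\bs))$, and then run the argument of \citet{lai1978limit} with a martingale central limit theorem replacing the i.i.d.\ one and a local handling of the finitely many non-differentiability points. The one caution is that the summands $h_\ell(\nu_\ell(\bs))$ are not independent (the functions $h_\ell$ are, but they are evaluated at past-measurable points), so the strong laws you invoke must be the martingale versions applied to $h_\ell(\nu_\ell(\bs))-\mathbb{E}[h_1(\nu_\ell(\bs))]$ --- a point you correctly flag at the end and which is exactly how the paper proceeds.
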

\begin{proof}
% As shown before in Section \ref{sec:3.4}, the set $\mathcal{R}_\bs$ of roots of $\tilde{f}_\bs$ is non-empty. Consider a root $\tilde{\nu}(\bs)\in\mathcal{R}_\bs$. We have shown the first statement (uniqueness of $\tilde{\nu}(\bs))$ if we show that the left and right derivative of $\tilde{f}_\bs$ at $\tilde{\nu}(\bs)$ are positive. It follows from the proof of Theorem~\ref{Thm:deriv} that we can lower bound the left derivative by $$\mathbb{E}[h_{1}((\tilde{\nu}(\bs))^{-})\mathbb{I}(\tilde{V}_{\bs,n}^{(K)}(\tilde{\nu}(\bs))\in[-1/2,1/2])]>0$$ by the assumption $\inf_\nu \mathbb{E}[h_1(\nu)]>0.$ A bound can similarly be given for the right derivative, in this case with $(\tilde{\nu}(\bs))^+$, proving the first statement.

The first statement is trivial as $\tilde{f}_\bs$ has a positive derivative wherever the derivative is defined.
We show the second statement by first verifying that the Robbins-Monro conditions hold almost surely, hence by Corollary~\ref{cor:convg_nu} we have that $\nu_m(\bs)$ converges almost surely to $\tilde{\nu}(\bs)$. As for a constant $H<\infty$ we have $|h_{\ell}| \leq H$  we have that $\sum_{\ell = 1}^m \alpha_\ell \geq \sum_{\ell = 1}^m 1/(\ell H)\rightarrow \infty$ almost surely. 
We verify that $\sum_{\ell = 1}^m \alpha_\ell^2 \leq\infty$ almost surely.
Let $\xi_\ell = h_{\ell}(\nu_\ell(\bs))~- \mathbb{E}[h_{1}(\nu_\ell(\bs))]$ and $\sigma^2_\ell = \mathbb{E}[\xi_\ell^2|\nu_\ell(\bs)].$ 
By the strong law of large numbers for martingales we have (by boundedness of $\xi_\ell$) that almost surely
\begin{equation}
	\lim_{m\rightarrow\infty}\frac{\sum_{\ell = 1}^m\xi_\ell}{\sum_{\ell = 1}^m \sigma^2_\ell}~=~0\label{eq:convgxi}
\end{equation}
Letting $H_\sigma$ be such that $|\sigma^2_\ell|\leq H_\sigma$ for all $\ell$ we can choose $\epsilon\in(0,\inf_\nu \mathbb{E}[h_1(\nu)]/H_\sigma)$ and have an $M\in\mathbb{N}$ such that for all $m> M$
$$\frac{1}{m}\sum_{\ell = 1}^mh_{\ell}(\nu_\ell(\bs))>\inf_\nu \mathbb{E}[h_{1}(\nu)] - \epsilon H_\sigma > 0.$$
We then conclude 
$$\sum_{\ell = 1}^\infty \alpha_\ell^2 \leq \sum_{\ell = 1}^{M} \alpha_\ell^2 + \sum_{\ell = M+1}^{\infty} \frac{1}{m^2(\frac{1}{m}\sum_{\ell = 1}^mh_{\ell}(\nu_\ell(\bs)))^2}< \sum_{\ell = 1}^{M} \alpha_\ell^2 + \sum_{\ell = M+1}^{\infty} \frac{1}{m^2(\inf_\nu \mathbb{E}[h_{1}(\nu)] - \epsilon H_\sigma)^2}<\infty.$$
Hence it follows by Corollary~\ref{cor:convg_nu} that $\nu_{m}(\bs)\stackrel{a.s.}{\rightarrow} \tilde{\nu}(\bs)$.

Now, if $\mathbb{P}(h_{\ell}((\tilde{\nu}(\bs))^-) \neq h_{\ell}((\tilde{\nu}(\bs))^+))>0$ the derivative of $\tilde{f}_\bs$ at $\tilde{\nu}$ would not exist, hence almost surely for all $\ell$ we have  $h_{\ell}((\tilde{\nu}(\bs))^-) = h_{\ell}((\tilde{\nu}(\bs))^+)$
and combined with the above result
it follows that $h_{\ell}(\nu_{\ell}(\bs))\rightarrow h(\tilde{\nu}(\bs))$ (with $h \stackrel{d}{=}h_{1}$ independently)  almost surely by continuous mapping (noting that the set of discontinuity points of $h$ is restricted to a deterministic set). By boundedness we also have
$\mathbb{E}[h_{\ell}(\nu_{\ell}(\bs))]\rightarrow \mathbb{E}[h(\tilde{\nu}(\bs))].$ Similarly, it can be shown that $\sigma^2_\ell \rightarrow \mathbb{E}[\xi^2_1|\tilde{\nu}(\bs)]=\sigma^2$ which is deterministic. Hence $\frac{1}{m}\sum_{\ell = 1}^m \sigma^2_\ell \rightarrow \sigma^2 $ and by \eqref{eq:convgxi} and continuous mapping we have $$m\alpha_m = \frac{m}{|\sum_{\ell = 1}^m h_\ell(\nu_\ell(\bs))|}\rightarrow 1/\mathbb{E}[h_1(\tilde{\nu}(\bs))].$$
The result follows along the lines of \citet{lai1978limit}, with three additional remarks.
\begin{itemize}
	\item In order to use the representation (17) in~\citet{lai1978limit} the recursion of the stochastic approximation has to be truncated earlier, to make sure that the iterates $\nu_\ell(\bs)$ remain in an interval around $\tilde{\nu}(\bs)$ where $\tilde{f}_\bs$ is differentiable, so as to use the mean value theorem.
	\item The martingale central limit theorem (Theorem 2 in~\citet{brown1971martingale}) can be used to show a central limit theorem result for the martingale $\sum_\ell \epsilon_\ell$, as the running mean of the quadratic variation process converges to a constant, we can show a central limit theorem result by just dividing by $\sqrt{m}$.
	\item A supremum law of iterated logarithm for martingales \citet{fisher1986upper} can be used to show that the middle term in (29) in~\citet{lai1979adaptive} goes to zero when divided by $\sqrt{m}.$ 
\end{itemize}

%  By the law of large numbers and the above result $$m\alpha_m = 1/|1/m \sum_{\ell=1}^m h_{\ell}(\nu_{\ell}(\bs)) |\stackrel{a.s.}{\rightarrow}1/\mathbb{E}[h_{1}(\tilde{\nu}(\bs))]= 1/(d/d\nu \tilde{f}_{\bs}(\tilde{\nu}(\bs))). $$ 
\end{proof}
\section{Notation Table} \label{table:notation}
\FloatBarrier
\begin{table}[h!]
{\footnotesize
	\setlength{\tabcolsep}{0.5mm}	\begin{tabular}{lll}\hline
		%{m{15mm} m{70mm} m{18mm}}
		Symbol  & Definition & Defined in\\\hline
		\midrule\midrule
		$a$ &  Markov chain/arm & Sec.~\ref{FABP} \\ 
		$t,u$ & Time index  & Sec.~\ref{FABP}\\
		$\bs^a_h,\bs^a,\bs$ & Initial state of an arm & Sec.~\ref{FABP} \\
		$\mathbb{E}_\bs, \mathbb{P}_\bs$ &  Transition kernel and expectation for the Markov chains, conditional on initial state $\bs$& Sec.~\ref{FABP}\\
		$\mathcal{H}$& Set of histories & \eqref{defn:histories}\\
		$R$ & Common reward function for the arms& Sec.~\ref{FABP}\\
		$\pi, \pi^*$& Policy and optimal policy (resp.) for the family of alternative bandit processes& Sec.~\ref{FABP}\\
		$C(\bs^a)$ & Total discounted absolute reward for sampling arm $a$, starting from state $\bs^a$& Sec.~\ref{FABP}\\
		$\mathbb{E}_{\pi}$ &Expectation operator under a fixed policy $\pi$& Sec.~\ref{FABP}\\
		$\gamma$ & Discount factor  & Sec.~\ref{FABP}\\
		$\nu(\bs)$ & Gittins index for state $\bs$& Sec.~\ref{FABP}\\
		$\mathcal{F}_t^a, \mathcal{F}_t$ & Natural filtration generated by $\bS^a$ and $\bS$ (resp.), including starting state & Sec.~\ref{FABP}, \ref{sec:3.1}\\
		$\mathcal{T}^a, \mathcal{T}$ & Set of stopping times w.r.t. $(\mathcal{F}_t^a)_t$ or $(\mathcal{F}_t)_t$ (resp.) & Sec.~\ref{FABP}, \ref{sec:3.1}\\
		$\bS_t^a,\bS_t$ & State of Markov chain/arm $a$ and general arm (resp.) at time $t$& Sec.~\ref{FABP}, \ref{sec:3.1}\\
		$\tau$ & Stopping time, used to determine optimal stopping value& Sec.~\ref{FABP}\\
		$N$ & Sampling horizon for optimal stopping value & Sec.~\ref{sec:3.1} \\
		$\mathcal{T}_N$ & Set of stopping times adapted to $\mathcal{F}_t$ and bounded by $N$& Sec.~\ref{sec:3.1}\\
		$g_t$ & Measurable real-valued cost function& Sec.~\ref{sec:3.1}\\
		$Z_t, Z^{(1)}_t$ & Cost $g_t(\bS_{\ixt})$ of an arm up to time $t$ & Sec.~\ref{sec:3.1}\\
		$Z_t^{(k+1)}$ & $Z_t^{(k)}-\mathbb{E}[\min_{u\in[N]}Z_u^{(k)}\mid \mathcal{F}_t]$& Sec.~\ref{sec:3.1}\\
		$K$& Truncation point of number of nested expectations in optimal stopping approximation  & Sec.~\ref{sec:3.1}\\
		$Z^{(k+1)}_{\bi,t,n}$& Sampling-based approximation of $Z^{(k+1)}_{t}$& Sec.~\ref{sec: 3.2}\\
		$n(\bi,k)$ & Number of simulated paths used to determine $Z^{(k+1)}_{\bi,t,n}$& Sec.~\ref{sec: 3.2}\\
		$V_n^{(K)}, V_\bs(\nu), V_{\bs,m}(\nu)$ & Sampling-based approximation of $\sum_{k=1}^K \mathbb{E}[\min_{u\in[N]}Z_u^{(k)}]$ and  $\sum_{k=1}^K \mathbb{E}_\bs[\min_{u\in[N]}Z_u^{(k)}(\nu)]$& Sec.~\ref{sec: 3.2}, \ref{sect: theoretical_results}\\
		$\sigma$&Stopping time, used to truncate the support of the rewards& \eqref{def:stoppingtime}\\
		$R_u,R_\ell$&Upper and lower bound for reward support (resp.), induced by stopping time $\sigma$& Sec.~\ref{sect: theoretical_results}\\
		$c$& Constant, equal to $(1-\gamma)/(2(R_u-R_\ell)(1-\gamma^N))$&Sec.~\ref{sect: theoretical_results}\\
		$Z_t(\nu)$& Cost $ c\sum_{u=0}^{t\wedge\sigma-1}\gamma^u(\nu-R(\bS_u))$ of an arm up to the minimum of time $t$ and $\sigma$&Sec.~\ref{sect: theoretical_results}\\
		$\nu_{\sigma}(\bs)$& Gittins index approximation found by truncation of horizon and reward support &\eqref{final_approx_GI}\\
		$\nu_m(\bs)$&Stochastic approximation iterates for determining SBGIA& \eqref{stoch_approx}\\
		$\alpha_m$ &Step-size sequence & Sec.~\ref{sect: theoretical_results}\\
		$\nu_M(\bs)$&Sampling-based Gittins index approximation& Sec.~\ref{sect: theoretical_results}\\
		$\tilde{f}_\bs(\nu)$& Expectation of $\mathbb{E}_\bs[V_\bs(\nu)]$&\eqref{def_f}\\
		${f}_\bs(\nu)$& Optimal stopping value $\inf_{\tau\in\mathcal{T}_N}\mathbb{E}_\bs[Z_\tau(\nu)]$&\eqref{def_f}\\
		$B(\delta,\xi)$& Domain error bound, equal to $\xi+\delta$&\eqref{bound_mean_diff}\\
		$\epsilon_m$& Martingale difference sequence & \eqref{MartDiffSeq}\\
		$\bar{\nu}_m,\ubar{\nu}_m$&  Stochastic approximation iterates bounding $\nu_m$ from above and below (resp.) &\eqref{defupperSA}, \eqref{deflowerSA}\\
		$\mathcal{R}_\bs$&Set of roots of $\tilde{f}_\bs$& Sec.~\ref{sec:3.4}\\
		$\tilde{\nu}(\bs)$&Root of $\tilde{f}_\bs$& Sec.~\ref{sec:3.4}\\
		$h_m$&Derivative of $ V_{\bs,m}(\nu)$& Th.~\ref{thm:convg_dist}\\
		$ \Psi_t$ & Sufficient statistics  & Sec.~\ref{ExpFam}\\
		$\kappa_t$&Effective number of observations  & Sec.~\ref{ExpFam}\\
		$O_t^a$& Outcome $t$ for distribution $a$& Sec.~\ref{sec:intro_bayes_experiment}\\
		$\btheta_a$& Parameter for distribution $a$& Sec.~\ref{sec:intro_bayes_experiment}\\
		$\Pi_t^a, \hat{\Pi}_t^a$& Posterior and approximate posterior (resp.) for $\btheta_a$ based on $(O^a_u)_{u=1}^t$ & Sec.~\ref{sec:intro_bayes_experiment}, \eqref{eqn:approx_posterior}\\		
		\hline
\end{tabular}}
\end{table}
\FloatBarrier

% Acknowledgments here
% \section*{Acknowledgments.}\phantom{.}\\
% ...
% Enter the text of acknowledgments here

% Enter the text of acknowledgments here

% References here (outcomment the appropriate case) 

% CASE 1: BiBTeX used to constantly update the references 
%   (while the paper is being written).

% CASE 2: BiBTeX used to generate mypaper.bbl (to be further fine tuned)
%\input{mypaper.bbl} % outcomment this line in Case 2

\end{document}